\def\C{{\mathbb C}}
\def\P{{\mathbb P}}
\def\dfrac#1#2{{\displaystyle\frac{#1}{#2}}}
\numberwithin{equation}{section}
\newtheorem{Theorem}{Theorem}[section]
\newtheorem{Corollary}[Theorem]{Corollary}
\newtheorem{Lemma}[Theorem]{Lemma}
\newtheorem{Proposition}[Theorem]{Proposition}
 { \theoremstyle{definition}
\newtheorem{Definition}[Theorem]{Definition}
\newtheorem{Example}[Theorem]{Example}
\newtheorem{Remark}[Theorem]{Remark} }
\begin{document}
\allowdisplaybreaks

\newcommand{\arXivNumber}{2104.06661}

\renewcommand{\PaperNumber}{076}

\FirstPageHeading

\ShortArticleName{Quantum Representation of Affine Weyl Groups and Associated Quantum Curves}

\ArticleName{Quantum Representation of Affine Weyl Groups\\ and Associated Quantum Curves}

\Author{Sanefumi MORIYAMA~$^{\rm a}$ and Yasuhiko YAMADA~$^{\rm b}$}

\AuthorNameForHeading{S.~Moriyama and Y.~Yamada}

\Address{$^{\rm a)}$~Department of Physics/OCAMI/NITEP, Osaka City University,\\
\hphantom{$^{\rm a)}$}~Sugimoto, Osaka 558-8585, Japan}
\EmailD{\href{mailto:sanefumi@osaka-cu.ac.jp}{sanefumi@osaka-cu.ac.jp}}

\Address{$^{\rm b)}$~Department of Mathematics, Kobe University, Rokko, Kobe 657-8501, Japan}
\EmailD{\href{mailto:yamaday@math.kobe-u.ac.jp}{yamaday@math.kobe-u.ac.jp}}

\ArticleDates{Received May 13, 2021, in final form August 04, 2021; Published online August 15, 2021}

\Abstract{We study a quantum (non-commutative) representation of the affine Weyl group mainly of type $E_8^{(1)}$, where the representation is given by birational actions on two variables~$x$,~$y$ with $q$-commutation relations. Using the tau variables, we also construct quantum ``fundamental'' polynomials $F(x,y)$ which completely control the Weyl group actions. The geometric properties of the polynomials $F(x,y)$ for the commutative case is lifted distinctively in the quantum case to certain singularity structures as the $q$-difference operators. This property is further utilized as the characterization of the quantum polynomials $F(x,y)$. As an application, the quantum curve associated with topological strings proposed recently by the first named author is rederived by the Weyl group symmetry. The cases of type $D_5^{(1)}$, $E_6^{(1)}$, $E_7^{(1)}$ are also discussed.}

\Keywords{affine Weyl group; quantum curve; Painlev\'e equation}

\Classification{39A06; 39A13}

\section{Introduction}\label{section1}

Quantization of the Painlev\'e equations (or isomonodromic deformations more generally) and their discrete variations is an important problem.
Recently, this subject attracts various interests due to its relation to conformal field theories, gauge theories and topological strings.
Despite some interesting pioneering works~\cite{BGM,BS,BGT17,CMR,Hasegawa,Kuroki}, there remain many problems to be studied especially on the quantization of the discrete Painlev\'e equations.
One of the main problems is to establish the quantization compatible with the geometric formulation in~\cite{KNY,Sakai:2001}.\footnote{In Appendix~\ref{sectionB}, we give a short summary for the classical cases.}
Such a~study is expected to clarify various developments mentioned above from a geometric viewpoint of {\it quantum curves}.

Recently, in the study of topological strings, certain quantum curves related to the affine Weyl group of type $D_5^{(1)}$, $E_6^{(1)}$, $E_7^{(1)}$, $E_8^{(1)}$ were obtained~\cite{Moriyama}.
The quantum curves were obtained by~combining previous classical results in~\cite{BBT,KY} and an empirical observation for quantization of the classical multiplicities~\cite{KMN} (as discussed later in Section~\ref{section3}).
Our main motivation is to formulate a quantum representation of the affine Weyl groups to provide a solid basis for the study of these quantum curves and the corresponding quantum $q$-difference Painlev\'e equations.
Among others, our work enables the derivation of these quantum curves from the first principle.\footnote{Recently, the elliptic quantum curve for the E-string theory is obtained in~\cite{CHKSW}.}
As discussed in the last section, we expect that our work will clarify the group structure of~various related physical theories.
In particular, we hope to relate directly our tau functions fully equipped with the group structure to the partition functions in topological strings in the future.

The contents of this paper is as follows.
In the remaining part of this section, we recall some basic results on the representation of the affine Weyl group $W\big(E_8^{(1)}\big)$ in the commutative case, focusing on polynomials (which we call fundamental or $F$-polynomials) generated by the Weyl group actions.
In Section~\ref{section2}, a natural quantization of the representation of $W\big(E_8^{(1)}\big)$ is formulated.
The quantization of the $F$-polynomials is associated to $q$-difference operators and we study a crucial non-logarithmic property of it in Section~\ref{section3}.
In Section~\ref{section4}, we show the main theorem which characterizes the quantum $F$-polynomials.
In Section~\ref{section5}, applying the constructions, we give a characterization of the quantum curve of type $E_8$.
In Section~\ref{section6}, we give a bilinear form of the Weyl group actions.
Section~\ref{section7} is for summary and discussions.
In~Appendix~\ref{sectionA}, the similar constructions are obtained for the cases of $D_5^{(1)}$, $E_6^{(1)}$ and $E_7^{(1)}$.
In~Appendix~\ref{sectionB}, the relation of the classical Weyl group representation in Section~\ref{section1} to the standard representations used in the $q$-Painlev\'e equations is summarized.

In order to explain the problem of this paper more explicitly, we recapitulate some basic facts on a birational representation of the affine Weyl group of type $E_8^{(1)}$, $W\big(E_8^{(1)}\big)=\langle s_0, s_1, \dots, s_8 \rangle$ defined by the Dynkin diagram:
\begin{gather*}
\begin{array}{@{}c@{\ }c@{\ }c@{\ }c@{\ }c@{\ }c@{\ }c@{\ }c@{\ }c@{\ }c@{\ }c@{\ }c@{\ }c@{\ }c@{\ }c@{}}
&&&&s_0\\
&&&&|\\
s_1&\text{---}&s_2&\text{---}&s_3&\text{---}&s_4&\text{---}&s_5&\text{---}&s_6&\text{---}&s_7&\text{---}&s_8.
\end{array}
\end{gather*}
All the results in this section are known in literature (see~\cite{Tsuda} for example) up to a change of~para\-metrization, hence we omit the proofs.

\begin{Proposition}\label{prop:classical_action}
Define the algebra automorphism $s_0, \dots, s_8$ on parameters $h_1,h_2, e_1, \dots, e_{11}$ and variables $x, y, \sigma_1, \sigma_2, \tau_1, \dots, \tau_{11}$ as
\begin{gather}
s_0=\bigg\{e_{10}\to \dfrac{h_2}{e_{11}},\, e_{11}\to \dfrac{h_2}{e_{10}},\, h_1\to \dfrac{h_1 h_2}{e_{10} e_{11}},\,x\to x\dfrac{1+y\frac{h_2}{e_{10}}}{1+ye_{11}}, \nonumber
\\ \hphantom{s_0=\bigg\{}
\tau_{10}\to \big(1+ye_{11}\big)\dfrac{\sigma_2}{\tau_{11}},\,
\tau_{11}\to \dfrac{\sigma_2}{\tau_{10}}\bigg(1+y\dfrac{h_2}{e_{10}}\bigg),\,
\sigma_1\to \big(1+ye_{11}\big)\dfrac{\sigma_1 \sigma_2}{\tau_{10} \tau_{11}} \bigg\}, \nonumber
\\[.5ex]
s_1=\{e_8\leftrightarrow e_9,\, \tau_8\leftrightarrow \tau_9\}, \quad\
s_2=\{e_7\leftrightarrow e_8,\, \tau_7\leftrightarrow \tau_8\}, \nonumber
\\[.5ex]
s_3=\bigg\{e_1\to \dfrac{h_1}{e_7},\, e_7\to \dfrac{h_1}{e_1},\, h_2\to \dfrac{h_1 h_2}{e_1 e_7},\,
y\to \dfrac{1+x\frac{e_7}{h_1}}{1+\frac{x}{e_1}}y, \nonumber
\\ \hphantom{s_3=\bigg\{}
\tau_1\to \bigg(1+x\dfrac{e_7}{h_1}\bigg)\dfrac{\sigma_1}{\tau_7},\,
\tau_7\to \dfrac{\sigma_1 }{\tau_1}\bigg(1+\dfrac{x}{e_1}\bigg),\,
\sigma_2\to \dfrac{\sigma_1 \sigma_2}{\tau_1 \tau_7}\bigg(1+\dfrac{x}{e_1}\bigg)\bigg\}, \nonumber
\\[.5ex]
s_4=\{e_1\leftrightarrow e_2,\, \tau_1\leftrightarrow \tau_2\}, \quad\
s_5=\{e_2\leftrightarrow e_3,\, \tau_2\leftrightarrow \tau_3\}, \quad\
s_6=\{e_3\leftrightarrow e_4,\, \tau_3\leftrightarrow \tau_4\}, \nonumber
\\[.5ex]
s_7=\{e_4\leftrightarrow e_5,\, \tau_4\leftrightarrow \tau_5\}, \quad\
s_8=\{e_5\leftrightarrow e_6,\, \tau_5\leftrightarrow \tau_6\}.
\label{eq:classical_action}
 \end{gather}
Then these actions give a birational representation of the affine Weyl group $W\big(E^{(1)}_8\big)$ on the field of rational functions $\C(h_i, e_i, x, y, \sigma_i, \tau_i)$.
\end{Proposition}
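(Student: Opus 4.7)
The plan is to verify that the prescribed automorphisms $s_0, \ldots, s_8$ in~\eqref{eq:classical_action} satisfy the Coxeter relations of $W\big(E_8^{(1)}\big)$: (i)~$s_i^2 = \mathrm{id}$ for each $i$; (ii)~$(s_i s_j)^3 = \mathrm{id}$ whenever $i, j$ are joined by an edge of the Dynkin diagram, namely for the pairs $(0,3)$, $(1,2)$, $(2,3)$, $(3,4)$, $(4,5)$, $(5,6)$, $(6,7)$, $(7,8)$; and (iii)~$(s_i s_j)^2 = \mathrm{id}$ for all remaining pairs. Since each $s_i$ is declared as an algebra automorphism of the rational function field, it is enough to verify these identities on the listed generators, and birationality then follows automatically from (i) because each $s_i$ is its own inverse.

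First I would dispose of the easy cases. The generators $s_1, s_2, s_4, s_5, s_6, s_7, s_8$ are simple transpositions of disjoint pairs, hence manifestly involutions, and the braid relations among consecutive ones reduce to the standard type $A$ Coxeter relations on the underlying permuted letters. Most commutation relations $(s_i s_j)^2 = \mathrm{id}$ follow from disjointness of supports, so only a short list of potentially overlapping cases needs real attention: $h_1, h_2, \sigma_1, \sigma_2$ are touched by both $s_0$ and $s_3$, $e_1$ and $\tau_1$ by both $s_3$ and $s_4$, and $e_7, \tau_7$ by both $s_2$ and $s_3$; but in each of these cases the two generators are adjacent in the Dynkin diagram, so the relation to be checked is a braid relation rather than commutation.

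The genuinely nontrivial identities are therefore $s_0^2 = \mathrm{id}$, $s_3^2 = \mathrm{id}$, $(s_0 s_3)^3 = \mathrm{id}$, $(s_2 s_3)^3 = \mathrm{id}$, and $(s_3 s_4)^3 = \mathrm{id}$. Each reduces to a finite computation on the nontrivially-transformed generators. For $s_3^2 = \mathrm{id}$, for example, iterating the parameter action gives $e_1 \mapsto h_1/e_7 \mapsto h_1^{\mathrm{new}}/e_7^{\mathrm{new}} = e_1$ and similarly for $e_7$ and $h_2$, while the ratio $(1 + xe_7/h_1)/(1 + x/e_1)$ in the $y$-transformation becomes its own inverse since $e_7^{\mathrm{new}}/h_1^{\mathrm{new}} = 1/e_1$ and $1/e_1^{\mathrm{new}} = e_7/h_1$; the $\tau$ and $\sigma$ checks run in parallel, and the computation for $s_0^2$ is entirely analogous. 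The expected main obstacle is the mixed braid relation $(s_0 s_3)^3 = \mathrm{id}$, where the rational $x, y$ transformations compound with the nonlinear parameter changes in $h_1, h_2, \sigma_1, \sigma_2$. A useful structural shortcut is to verify first that the action on $\log e_i, \log h_i$ linearizes to the standard Cartan action of $W\big(E_8^{(1)}\big)$ on the weight lattice, which automatically satisfies the Coxeter relations; the remaining work is then to check compatibility of the $x, y, \sigma, \tau$ pieces with this, which is a tedious but finite bookkeeping exercise. Alternatively, one can transport the verification from the equivalent representation in~\cite{Tsuda} through a parameter dictionary, as suggested by the remark that all results in this section are known up to a change of parametrization.
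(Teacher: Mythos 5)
Your plan is correct, and it is essentially the verification the paper itself has in mind: the paper omits a proof of this proposition altogether, remarking that the result is known in the literature (Tsuda) up to reparametrization, and its quantum counterpart (Theorem~\ref{thm:quantum-rep}) is likewise established by ``a direct computation''. Your reduction is sound: since each $s_i$ is an algebra automorphism it suffices to check the Coxeter relations on generators, involutivity gives birationality, disjointness of supports disposes of all commutation relations, the transposition generators handle the type~$A$ braid relations, and the only genuinely nontrivial checks are $s_0^2$, $s_3^2$ and the braid relations at the edges $(0,3)$, $(2,3)$, $(3,4)$; your sample computation for $s_3^2$ is accurate. The one place where your proposal stays at the level of ``tedious but finite bookkeeping'' --- the relation $s_0s_3s_0=s_3s_0s_3$ acting on $x$, $y$, $\sigma_i$, $\tau_i$ --- is exactly where the paper later offers a cleaner mechanism: Theorem~\ref{thm:adjoint} realizes $s_0$, $s_3$ as adjoint actions by ratios of $q$-factorials composed with the multiplicatively linear maps $r_i$, so that the braid relation reduces to the product identity of Remark~\ref{remark6} (a quantum dilogarithm-type identity, which at $q=1$ degenerates to an elementary rational-function identity). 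So your argument is a legitimate, more pedestrian route; adopting the adjoint-action trick (or the dictionary to the representation in Tsuda, which you also mention) would just shorten the one heavy computation.
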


The representation is based on a special configuration of $11$ points on $\P^1\times \P^1$ (see Figure~\ref{fig:11config}).
For the blow-up $X$ of $\P^1 \times \P^1$ at the 11 points $p_i$ $(i=1,2,\dots,11)$, the Picard lattice $P={\rm Pic}(X)$ is generated by $H_1,H_2, E_1, \dots, E_{11}$, with the only non-vanishing intersection pairings being $H_1\cdot H_2=H_2\cdot H_1=1$, $E_i\cdot E_i=-1$.
The actions (\ref{eq:classical_action}) are closed on subfields $\C(h_i,e_i)$ and~$\C(h_i,e_i,x,y)$.
The restriction on $\C(h_i,e_i)$
\begin{gather*}
s_0=\bigg\{e_{10}\to \dfrac{h_2}{e_{11}},\ e_{11}\to \dfrac{h_2}{e_{10}},\ h_1\to \dfrac{h_1 h_2}{e_{10} e_{11}}\bigg\},
\qquad
s_1=\{e_8\leftrightarrow e_9\}, \qquad
s_2=\{e_7\leftrightarrow e_8\},
\\
s_3=\bigg\{e_1\to \dfrac{h_1}{e_7},\ e_7\to \dfrac{h_1}{e_1},\ h_2\to \dfrac{h_1 h_2}{e_1 e_7}\bigg\}, \qquad
s_4=\{e_1\leftrightarrow e_2\}, \qquad
s_5=\{e_2\leftrightarrow e_3\},
\\
s_6=\{e_3\leftrightarrow e_4\}, \qquad
s_7=\{e_4\leftrightarrow e_5\}, \qquad
s_8=\{e_5\leftrightarrow e_6\},
\end{gather*}
is nothing but the natural linear actions on the Picard lattice written in the multiplicative notation: $h_i =\exp H_i$, $e_i=\exp E_i$.
When $x=y=0$, the actions on $\sigma_i$, $\tau_i$ are just copies of the actions on $h_i$, $e_i$.
In terms of the parameters $h_i$, $e_i$ the points $p_1, \dots, p_{11}$ can be parametrized as
\begin{gather*}
p_i=\big({-}e_i,0\big) \qquad (i=1, \dots, 6),\qquad
p_i=\bigg({-}\dfrac{h_1}{e_i}, \infty\bigg) \qquad (i=7, 8, 9),
\\
p_{10}=\bigg(\infty,-\dfrac{e_{10}}{h_2}\bigg), \qquad
p_{11}=\bigg(0, -\dfrac{1}{e_{11}}\bigg).
\end{gather*}
This parametrization is compatible under the actions of the Weyl group $W\big(E_8^{(1)}\big)$.
\begin{figure}[h]
\begin{center}
\setlength{\unitlength}{0.7mm}
\begin{picture}(90,45)(10,5)
\put(0,10){\line(1,0){90}}
\put(0,40){\line(1,0){90}}
\put(10,0){\line(0,1){50}}
\put(80,0){\line(0,1){50}}
\put(5,-5){$x=0$}\put(75,-5){$x=\infty$}
\put(95,10){$y=0$}\put(95,40){$y=\infty$}
\put(10,25){\circle*{2} $p_{11}$}
\put(80,25){\circle*{2} $p_{10}$}
\put(20,10){\circle*{2}}\put(20,13){$p_1$}
\put(30,10){\circle*{2}}\put(30,13){$p_2$}
\put(40,10){\circle*{2}}\put(40,13){$p_3$}
\put(50,10){\circle*{2}}\put(50,13){$p_4$}
\put(60,10){\circle*{2}}\put(60,13){$p_5$}
\put(70,10){\circle*{2}}\put(70,13){$p_6$}
\put(25,40){\circle*{2}}\put(25,43){$p_7$}
\put(45,40){\circle*{2}}\put(45,43){$p_8$}
\put(65,40){\circle*{2}}\put(65,43){$p_9$}
\end{picture}
\end{center}
\caption{Configuration of the 11 points.}
\label{fig:11config}
\end{figure}
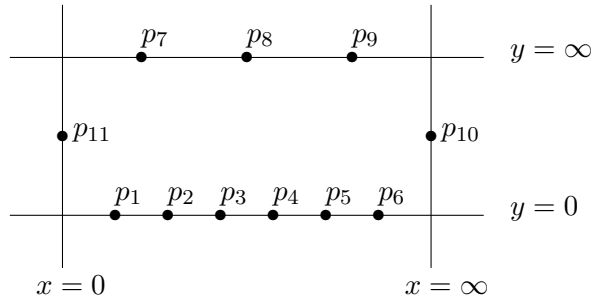

For an algebraic curve in $X$, its homological data $\lambda=(d_i, m_i)$ (i.e., the bidegree $(d_1,d_2)$ and the multiplicity $m_i$ at the $i$-th point $p_i$) can be represented by an element of $P$ as
\begin{gather}\label{eq:add_lambda}
\lambda=d_1 H_1+d_2 H_2-m_1 E_1-\cdots -m_{11}E_{11}.
\end{gather}
Sometimes, to represent the data $\lambda=(d_i, m_i)$, we use a multiplicative notation
\begin{gather*}
e^\lambda=\dfrac{h_1^{d_1}h_2^{d_2}}{e_1^{m_1}\cdots e_{11}^{m_{11}}},\qquad
\tau^\lambda=\dfrac{\sigma_1^{d_1}\sigma_2^{d_2}}{\tau_1^{m_1}\cdots \tau_{11}^{m_{11}}}.
\end{gather*}

We call variables $\sigma_i$, $\tau_i$ the {\it tau} variables (or {\it tau} functions).
The tau functions are the main objects in the theory of isomonodromic deformations~\cite{JM}, and their representation-theoretical formulation was initiated in~\cite{NY:cmp}.
Although quantum curves as well as their classical analogs can be discussed in the subfield $\C(h_i,e_i,x,y)$ as in~\cite{Moriyama}, we stress that the appropriate introduction of the variables $\sigma_i$, $\tau_i$ in equation~(\ref{eq:classical_action}) clarifies the structure of the Weyl group actions largely since it reduces the problem of rational functions of $x$, $y$ into that of polynomials (see the last remark in this section).
Indeed, the basic fact on the representation (\ref{eq:classical_action}) is the following holomorphic property which is related to the {\it singularity confinement} (see~\cite{GNR} and references therein) and the {\it Laurent phenomenon}~\cite{FZ}.
\begin{Proposition}\label{phi_lambda}
For any $w \in W\big(E^{(1)}_8\big)$, the action of $w$ on variables $\tau_i$ $(i=1, \dots,11)$ is given~by
\begin{gather*}
w(\tau_i)=\phi_{w, i}(x,y) \tau^\lambda, \qquad
\tau^\lambda=\frac{\sigma_1^{d_1}\sigma_2^{d_2}}{\tau_1^{m_1}\cdots \tau_{11}^{m_{11}}},
\end{gather*}
where $\lambda=(d_i, m_i)$ is determined by
$w(e_i)=e^\lambda=\frac{h_1^{d_1}h_2^{d_2}}{e_1^{m_1}\cdots e_{11}^{m_{11}}}$, and $\phi_{w, i}(x,y)$ is a polynomial associated with the degree/multiplicity data $\lambda=(d_i, m_i)$.
Moreover, regardless of the above construction using the action of $w$, the polynomial $\phi_{w, i}(x,y)$ can be recovered by the geometric conditions specified by the data $\lambda=(d_i, m_i)$ uniquely up to a normalization.
Hence we can denote $\phi_{w,i}(x,y)$ by $F_\lambda(x,y)$.
\end{Proposition}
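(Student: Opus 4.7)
The plan is a two-stage argument: first I~would establish by induction on the word length of $w\in W(E_8^{(1)})$ that $w(\tau_i)$ factors as $\phi_{w,i}(x,y)\tau^\lambda$ with $\phi_{w,i}$ polynomial, and second I~would pin down the polynomial geometrically by a Riemann--Roch dimension count on the blown-up surface $X$, which in turn shows that $\phi_{w,i}$ depends on $w$ only through $\lambda$ and justifies the notation $F_\lambda(x,y)$.

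For the inductive existence, the base case $w=\mathrm{id}$ gives $\tau_i=1\cdot\tau^{-E_i}$. The inductive step writes $w=s_k w'$ and applies $s_k$ to the expression for $w'(\tau_i)$. The permutation-type generators $s_1,s_2,s_4,\dots,s_8$ swap two of the $\tau_j$'s while fixing $x,y$, so they trivially preserve the polynomial form. The non-trivial cases are $s_0$ and $s_3$; taking $s_3$ as representative, the substitution $y\to y(1+xe_7/h_1)/(1+x/e_1)$ combined with the explicit rational factors from $s_3(\sigma_2)$, $s_3(\tau_1)$, $s_3(\tau_7)$ must be shown to leave a polynomial in the new $x,y$. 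The only denominators that can appear are powers of $(1+x/e_1)$ and $(1+xe_7/h_1)$, i.e.\ the linear forms vanishing at the blow-up centres $p_1$ and $p_7$; their exponents in the denominator match exactly the shift of the multiplicities $m_1,m_7$ prescribed by the linear action of $s_3$ on $P=\mathrm{Pic}(X)$, so they cancel against the transformation of $\tau^\lambda$ and the net result is a polynomial times $\tau^{s_3\cdot\lambda}$.

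For the geometric characterization, any $w$ with $w(e_i)=e^\lambda$ places $\lambda$ in the $W(E_8^{(1)})$-orbit of $E_i$ inside the Picard lattice, so $\lambda$ is an effective $(-1)$-class with $\lambda^2=-1$ and $\lambda\cdot K_X=-1$ on the (generalized) anticanonical rational surface $X$. Standard vanishing of $H^1$ for such classes together with Riemann--Roch then gives $h^0(X,\mathcal{O}_X(\lambda))=1$, so the linear system of bidegree-$(d_1,d_2)$ polynomials in $x,y$ vanishing at each $p_j$ to order $\geq m_j$ is one-dimensional. Hence the polynomial $\phi_{w,i}$ produced by the induction is determined by $\lambda$ alone up to a multiplicative constant, which is precisely the statement $\phi_{w,i}=F_\lambda$.

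The main obstacle is the combinatorial book-keeping in the $s_0$ and $s_3$ step of the induction: one must verify that the rational artefacts of the $x,y$-substitution are exactly absorbed by the simultaneous transformation of the tau-monomial, and that the resulting bidegree and multiplicities match the linear action on $P$. A~cleaner organization is probably to prove the uniqueness half first and then use it to reduce the induction to checking a few ``atomic'' fundamental polynomials $F_{-E_j}$ and the linear/hyperplane classes, rather than grinding through the explicit cancellations for every word; this is essentially the perspective adopted systematically in Section~\ref{section2}.
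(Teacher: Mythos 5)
The paper itself does not prove this proposition (it is quoted from the literature, cf.\ \cite{Tsuda}); the closest in-paper template is the quantum analogue, namely Theorem~\ref{thm:sF} combined with the dimension count of Proposition~\ref{e8weylorbit}, and your two-stage plan (covariance under the simple reflections plus a Riemann--Roch count for the $(-1)$-class $\lambda=w(E_i)$) is exactly that structure. Your uniqueness half is fine and is on par with Proposition~\ref{e8weylorbit}: the count $(d_1+1)(d_2+1)-\sum_k\frac{m_k(m_k+1)}{2}=\frac12\lambda\cdot\lambda+\frac12\lambda\cdot\delta_{\rm Red}+1=1$, made exact by $H^1$-vanishing for such classes at generic parameters.

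The gap is in the inductive existence step. The hypothesis ``$w'(\tau_i)=\phi\,\tau^{\lambda'}$ with $\phi$ a polynomial'' is too weak, and the asserted mechanism --- that the denominators produced by $y\to y\bigl(1+xe_7/h_1\bigr)/\bigl(1+x/e_1\bigr)$ ``cancel against the transformation of $\tau^{\lambda'}$'' --- is not correct as stated. Write $\phi=\sum_j B_j(x)y^j$ with $\deg_y\phi=d_2$. Under $s_3$ the coefficient of $y^j$ acquires $\bigl(1+xe_7/h_1\bigr)^{j}\bigl(1+x/e_1\bigr)^{-j}$, while $s_3(\tau^{\lambda'})$ contributes $\bigl(1+x/e_1\bigr)^{d_2-m_7}\bigl(1+xe_7/h_1\bigr)^{-m_1}$ (from $\sigma_2^{d_2}$, $\tau_7^{-m_7}$, $\tau_1^{-m_1}$). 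The net exponent of $\bigl(1+xe_7/h_1\bigr)$ is $j-m_1$, negative for $j<m_1$, and that of $\bigl(1+x/e_1\bigr)$ is $d_2-m_7-j$, negative for $j>d_2-m_7$: the tau-monomial supplies an extra denominator rather than absorbing everything. Polynomiality holds only because $B_j$ is divisible by $\bigl(1+x/e_1\bigr)^{(m_1-j)_+}\bigl(1+xe_7/h_1\bigr)^{(j-d_2+m_7)_+}$ (the multiplicity conditions at $p_1$, $p_7$ carried by $\lambda'$), and because $s_3$ also acts on the parameters ($e_1\to h_1/e_7$, $e_7\to h_1/e_1$), converting those factors into exactly the needed powers. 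So the induction must carry the full linear-system membership of $\phi$ --- bidegree and all eleven multiplicities, i.e.\ the $q=1$ versions of the conditions $(x)_\lambda$, $(y)_\lambda$ --- and the step must show this membership is reproduced for $s_i^*\lambda'$; this is precisely what the proof of Theorem~\ref{thm:sF} does. Your proposed shortcut of proving uniqueness first does not remove this burden: uniqueness identifies $\phi_{w,i}$ with $F_\lambda$ only once you know $\phi_{w,i}$ lies in the system attached to $\lambda$, which is the covariance statement itself, and since the orbit of $E_i$ is infinite there is no finite list of ``atomic'' cases to reduce to. (A trivial point: with the paper's conventions the base case reads $\tau_i=\tau^{E_i}$, not $\tau^{-E_i}$.)
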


\begin{Remark}
The curves $C\colon F_\lambda(x,y)=0$ are transforms of the exceptional curve $E_i$ under the birational actions $w \in W\big(E_8^{(1)}\big)$, hence the curves $C$ are rational and rigid.
\end{Remark}

\begin{Example}
For $w=s_{3,2,1,0,2,4,3}$ $(=s_3s_2s_1s_0s_2s_4s_3)$, we have $e^\lambda:=w(e_1)=\frac{h_1^2 h_2}{e_1 e_7 e_9 e_{10} e_{11}}$, and
\begin{gather}\label{eq:ex1_cl}
F_{\lambda}(x,y)=
\bigg(1+\frac{e_1 e_7 e_9 e_{10} e_{11}}{h_1^2 h_2} x\bigg) \bigg(1+\frac{1}{e_1}x\bigg)
+e_{11}\bigg(1+\frac{e_7}{h_1} x\bigg) \bigg(1+\frac{e_9}{h_1} x\bigg)y.
\end{gather}
For $w=s_{0, 3, 4, 0, 2, 3, 2, 1, 0, 2, 4, 3}$, we have $e^\lambda:=w(e_{11})=\frac{h_1^2 h_2^2}{e_1 e_2 e_7 e_8 e_{10}^2 e_{11}}$, and
\begin{align}
F_{\lambda}(x,y)={}&
\frac{x^2 \big(1+\frac{h_2}{e_{10}} y\big)^2}{e_1 e_2}
+x\bigg(1+\frac{h_2}{e_{10}} y\bigg) \bigg\{\bigg(\frac{1}{e_7}+\frac{1}{e_8}\bigg) \frac{h_1 h_2}{e_1e_2e_{10}} y+\bigg(\frac{1}{e_1}+\frac{1}{e_2}\bigg)\bigg\}\nonumber
\\[1ex]
 &+\big(1+e_{11} y\big) \bigg(1+\frac{h_1^2 h_2^2}{e_1 e_2 e_7 e_8 e_{10}^2 e_{11}} y\bigg).
\end{align}\label{eq:ex2_cl}
\end{Example}

\begin{Remark}
We see that the variables $k_1$, $k_2$ defined by
\begin{gather}\label{eq:k1k2}
k_1=x \dfrac{\tau_{10}}{\tau_{11}}, \qquad
k_2=y \dfrac{\tau_7\tau_8\tau_9}{\tau_1\tau_2\cdots \tau_6},
\end{gather}
are $W\big(E^{(1)}_8\big)$ invariant.
Hence, the rational actions of $w \in W\big(E^{(1)}_8\big)$ on $x$, $y$ can be determined by the polynomials corresponding to $w(\tau_1), \dots, w(\tau_{11})$.
\end{Remark}

\section{Quantum representation}\label{section2}

In the following, we use the same symbols $h_i$, $e_i$, $x$, $y$, $\sigma_i$, $\tau_i$ for the quantum (non-commutative) objects.
This notation is economical and consistent with the commutative case since the latter can be recovered by taking the specialization $q=1$.

\begin{Definition}
Let ${\mathcal K}$ be a skew (non-commutative) field on the variables $h_1,h_2, e_1, \dots, e_{11}$, $x,y, \sigma_1, \sigma_2, \tau_1, \dots, \tau_{11}$, where the non-trivial commutation relations are
\begin{gather}\label{eq:qcom}
yx=qxy, \qquad
\tau_i e_i=q^{-1}e_i\tau_i, \qquad
\sigma_1 h_2=q h_2\sigma_1, \qquad
\sigma_2 h_1=q h_1\sigma_2,
\end{gather}
and other pairs are assumed to be commutative.
\end{Definition}

\begin{Remark}
In view of the results in~\cite{Kuroki2}, where the construction of~\cite{NY:birat} is nicely quantized, it is natural to regard the variables $\sigma_i$, $\tau_i$ to be dual to the parameters $h_i$, $e_i$.
Indeed, the $q$-commutation relations (\ref{eq:qcom}) can be concisely written as
\begin{gather*}
\tau^{\lambda}e^\mu=q^{\lambda\cdot \mu}e^\mu \tau^\lambda,
\end{gather*}
using the intersection pairing $\lambda\cdot \mu=d_1d'_2+d_2d'_1-\sum_{i=1}^{11}m_i m'_i$ for
$\tau^\lambda={\sigma_1^{d_1}\sigma_2^{d_2}}/\big({\tau_1^{m_1}\cdots \tau_{11}^{m_{11}}}\big)$,
$e^{\mu}={h_1^{d'_1}h_2^{d'_2}}/\big(e_1^{m'_1}\cdots e_{11}^{m'_{11}}\big)$ as well as $\lambda=(d_i, m_i)$, $\mu=(d'_i, m'_i)$.
\end{Remark}

Under the non-commutative setting given above, there exists a natural quantization of Proposition \ref{prop:classical_action}.
\begin{Theorem}\label{thm:quantum-rep}
On the skew field ${\mathcal K}$ there exists a birational representation of the affine Weyl group $W\big(E^{(1)}_8\big)=\langle s_0, \dots, s_8\rangle$ given exactly by the same equation as in equation~\eqref{eq:classical_action}.
\end{Theorem}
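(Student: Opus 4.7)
The plan is to establish two things: (a) each $s_i$ is a well-defined algebra automorphism of $\mathcal{K}$, in the sense that it preserves the commutation relations \eqref{eq:qcom}, and (b) the generators $s_0,\dots,s_8$ satisfy the Coxeter relations of $W(E_8^{(1)})$. Since the classical analogue (Proposition~\ref{prop:classical_action}) is already available, the only new content is the careful bookkeeping of non-commutative orderings.

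For (a), the seven simple transpositions $s_1,s_2,s_4,\ldots,s_8$ manifestly preserve \eqref{eq:qcom} since each relation is symmetric in the indices being swapped. For $s_0$ and $s_3$ I would check the relations directly, organized by two structural observations. First, the correction factors appearing in $s_0$, namely $1+ye_{11}$ and $1+yh_2/e_{10}$, lie in the commutative subalgebra generated by $y$ together with all parameters $h_i,e_i$, and symmetrically the factors in $s_3$ lie in the subalgebra generated by $x$ and the parameters; hence these factors act as scalars with respect to any element commuting with $y$ (resp.~$x$), and each verification reduces to a single reordering $yx=qxy$ or $\tau_i e_i=q^{-1}e_i\tau_i$. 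Second, the $(e_i,\tau_i)$-sector is governed by the compact identity $\tau^{\lambda}e^{\mu}=q^{\lambda\cdot\mu}e^{\mu}\tau^{\lambda}$ with $\lambda\cdot\mu$ the Picard-lattice intersection pairing; each $s_i$ acts on the exponents $\lambda,\mu$ via the standard Weyl reflection, which preserves this pairing, so the entire family of $\tau$--$e$ relations is preserved automatically, and only the relations involving $x$ or $y$ need to be verified by hand.

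For (b), the involutivity $s_i^2=\mathrm{id}$ is trivial for the swaps; for $s_0$ and $s_3$ it reduces to the observation that the generator itself interchanges its own two correction factors (for instance $s_0$ swaps $1+yh_2/e_{10}$ with $1+ye_{11}$), after which these factors commute with one another and telescope. The commutations $s_is_j=s_js_i$ for $i,j$ non-adjacent in the Dynkin diagram hold because the two generators touch disjoint sets of variables. The braid relations $s_is_{i+1}s_i=s_{i+1}s_is_{i+1}$ along the horizontal chain reduce to the standard $S_3$ identity on three consecutive indices.

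The main obstacle is the single genuinely quantum braid relation $s_0s_3s_0=s_3s_0s_3$, where both nontrivial generators interact. I would verify it variable by variable: the $(h_i,e_i)$-action is commutative and immediate from the classical identity; for $x$, $y$, and the affected tau variables $\tau_1,\tau_7,\tau_{10},\tau_{11},\sigma_1,\sigma_2$ the classical result already guarantees equality of the underlying rational expressions, so the quantum content reduces to checking that the $q$-factors produced when moving $x$ past polynomials in $y$ (and vice versa) match on the two sides. I expect this matching to hold because the total $(x,y)$-bidegree of each image is invariant under the comparison, and the $q$-weight of every reordering is dictated by that bidegree through \eqref{eq:qcom}; verifying this balance is the most intricate but essentially mechanical part of the argument.
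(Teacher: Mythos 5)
Your overall plan---show each $s_i$ preserves the relations \eqref{eq:qcom}, then verify the Coxeter relations with $s_0s_3s_0=s_3s_0s_3$ singled out as the only genuinely quantum case---is the same ``direct computation'' route the paper nominally takes, and your structural remarks are mostly sound: the prefactors of $s_0$, $s_3$ commute with all $h_i,e_i$, and the $\tau$--$e$ sector is controlled by $\tau^\lambda e^\mu=q^{\lambda\cdot\mu}e^\mu\tau^\lambda$ together with $W$-invariance of the pairing. Two smaller caveats: the mixed checks such as the commutativity of $s_0(x)$ with $s_0(\tau_{10})$ need a short computation combining $yx=qxy$ with $\tau_{11}e_{11}=q^{-1}e_{11}\tau_{11}$ (they are not a ``single reordering''), and the braid relations $s_2s_3s_2=s_3s_2s_3$, $s_3s_4s_3=s_4s_3s_4$ are not pure $S_3$ identities, since one of the two generators is the nontrivial birational action $s_3$; they are easy but must still be checked.

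The genuine gap is your treatment of $s_0s_3s_0=s_3s_0s_3$. You do not verify it; you assert the $q$-factors on the two sides must agree ``because the total $(x,y)$-bidegree of each image is invariant and the $q$-weight of every reordering is dictated by that bidegree.'' That is false as a principle: monomials of equal bidegree pick up different powers of $q$ depending on the order in which factors are multiplied (already $(1+ay)(1+bx)$ and $(1+bx)(1+ay)$ differ in their $xy$ term by a factor $q$), and the paper explicitly notes that the operator ordering in \eqref{eq:classical_action} had to be fixed precisely so that the Weyl relations hold---so no bidegree bookkeeping can replace the actual verification, and this is exactly where the quantum content of the theorem sits. To close the gap you must either carry out the computation of both sides on all variables, or follow the paper's cleaner mechanism: realize $s_0$, $s_3$ as adjoint actions $s_i(X)=G_i^{-1}r_i(X)G_i$ with $G_0$, $G_3$ ratios of $q$-factorials (Theorem~\ref{thm:adjoint}), whereupon the braid relation reduces to an equality $G=\tilde G$ of products of $q$-factorials, i.e.\ the quantum-dilogarithm-type identity \eqref{eq:prod-id}, which the paper proves via the $q$-binomial theorem and Heine's ${}_2\varphi_1$ transformation. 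As written, your proposal leaves this essential step unproved and supported only by an invalid heuristic.
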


\begin{proof}
A direct computation (see also Remark~\ref{remark6}).
\end{proof}

\begin{Remark}
We have fixed the operator ordering in equation~(\ref{eq:classical_action}) through the requirements of the Weyl group relations.
Since the results seem to be consistent with the prescription of the ``$q$-ordering" (or Weyl ordering) applied in~\cite{Moriyama}, it will be interesting to study whether and how such a prescription works in general.
\end{Remark}

\begin{Remark}
The quantum Weyl group actions on the subfield $\C(h_i,e_i,x,y)$ can be constructed from the quantum curves in~\cite{Moriyama} without difficulty.
In~\cite{Moriyama}, two realizations of the quantum curves, i.e., the ``triangular'' form and the ``rectangular'' form were constructed from a heuristic method by consulting previous classical results in~\cite{BBT,KY} and an empirical quantization rule in~\cite{KMN}.
The two realizations are related explicitly by a birational transformation, where each simple reflection $s_i$ is given by explicit actions on $\{h_i,e_i\}$, and besides, trivially on $\{x,y\}$ at least in one realization.
By composition, the nontrivial actions in one realization are transplanted from the trivial ones in the other and all the actions of $s_i$ in the subfield $\C(h_i,e_i,x,y)$ are obtained.
As~a~result, the actions of $s_i$ are identical to those anticipated from previous works by~\cite{Hasegawa} for~$W\big(D_5^{(1)}\big)$ and~\cite{KMN}\footnote{Note that it is necessary to generalize slightly from~\cite{KMN,Moriyama} to obtain the representation of the affine Weyl group by lifting the constraint on the parameters, since only symmetries of the quantum curve (which is non-affine) were discussed there.} for~$W\big(D_5^{(1)}\big)$, $W\big(E_7^{(1)}\big)$.
We emphasize that here the quantum Weyl group actions on the tau variables are also obtained.
Namely, inspired by the work~\cite{Kuroki2}, we have further noticed that the representations can be lifted by including the variables $\{\sigma_i, \tau_i\}$ as in equation~(\ref{eq:classical_action}).
Since the final result is quite simple and almost identical to the known classical case, we decide to take a quick style of presentation omitting the roundabout derivations.
With the quantum Weyl group actions on the tau variables identified, we can rederive the quantum curves from solid arguments.
\end{Remark}

\begin{Example}
For $w=s_{3,2,1,0,2,4,3}$, $e^\lambda:=w(e_1)=\frac{h_1^2 h_2}{e_1 e_7 e_9 e_{10} e_{11}}$, we have
\begin{gather}\label{eq:ex1_qu}
F_{\lambda}(x,y)=
\bigg(1+\frac{e_1 e_7 e_9 e_{10} e_{11}}{h_1^2 h_2} x\bigg) \bigg(1+\frac{q^{-1}}{e_1}x\bigg)
+e_{11}\bigg(1+\frac{e_7}{h_1} x\bigg) \bigg(1+\frac{e_9}{h_1} x\bigg)y.
\end{gather}
For $w=s_{0, 3, 4, 0, 2, 3, 2, 1, 0, 2, 4, 3}$, $e^\lambda:=w(e_{11})=\frac{h_1^2 h_2^2}{e_1 e_2 e_7 e_8 e_{10}^2 e_{11}}$, we have
\begin{align}
F_{\lambda}(x,y)={}&
\frac{x^2 \big(1+\frac{h_2}{e_{10}} y\big)\big(1+\frac{q h_2}{e_{10}} y\big)}{e_1 e_2q^2}
+\frac{x}{q}\bigg(1+\frac{h_2}{e_{10}} y\bigg) \bigg\{\bigg(\frac{1}{e_7}+\frac{1}{e_8}\bigg) \frac{h_1 h_2}{e_1e_2e_{10}} y+\bigg(\frac{1}{e_1}+\frac{1}{e_2}\bigg)\bigg\}\nonumber
\\
&+(1+e_{11} y) \bigg(1+\frac{h_1^2 h_2^2}{qe_1 e_2 e_7 e_8 e_{10}^2 e_{11}} y\bigg).
\label{eq:ex2_qu}
\end{align}
As expected, equations~(\ref{eq:ex1_qu}) and~(\ref{eq:ex2_qu}) reduce to equations~(\ref{eq:ex1_cl}) and~(\ref{eq:ex2_cl}) respectively when $q=1$.
\end{Example}

The representation can be realized as the adjoint actions as follows.
\begin{Theorem}\label{thm:adjoint}
The actions $s_i$ on variables $X=e_i, h_i, \tau_i, \sigma_i,x,y$ can be written as
\begin{gather*}
s_i(X)=G_i^{-1}r_i(X)G_i,
\\
G_0=\dfrac{\big(\frac{h_2}{e_{10}}y;q\big)^+_\infty}{(e_{11} y;q)^+_\infty}, \qquad
G_3=\dfrac{\big(\frac{1}{e_{1}}x;q\big)^+_\infty}{\big(\frac{e_7}{h_1} x;q\big)^+_\infty}, \qquad
G_i=1\qquad (i\neq 0,3),
\end{gather*}
where $(z;q)^+_{\infty}=\prod_{i=0}^{\infty}(1+q^i z)$ is the $q$-factorial and $r_i$ is a multiplicatively linear action on~$\{h_i, e_i, \sigma_i, \tau_i\}$ defined by $r_i(X)=s_i(X)|_{x=y=0}$, and $r_i(x)=x$, $r_i(y)=y$.
\end{Theorem}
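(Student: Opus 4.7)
The strategy is to verify the adjoint formula $s_i(X)=G_i^{-1}r_i(X)G_i$ directly, generator by generator. For $i\in\{1,2,4,5,6,7,8\}$ the claim is immediate: $G_i=1$ and $s_i$ merely swaps indices among variables that commute with both $x$ and $y$, so $s_i=r_i$ on the nose. Only $i=0$ and $i=3$ require actual computation, and these two are exchanged by the evident $x\leftrightarrow y$ duality of the setup ($h_1,e_1,\dots,e_9\leftrightarrow h_2,e_{11},e_{10}$ together with the $\tau$/$\sigma$ analogues), so I will treat $i=0$ in detail and leave $i=3$ as parallel.

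The key tool is the elementary identity for $f(z):=(z;q)^+_\infty$: whenever $uX=q^c X u$ with $c$ an integer, one has $f(u)Xf(u)^{-1}=X\cdot f(q^c u)/f(u)$, and the telescoping relation $(qz;q)^+_\infty=(z;q)^+_\infty/(1+z)$ collapses this ratio to $(1+u)^{-1}$ for $c=1$ and to $(1+q^{-1}u)$ for $c=-1$. Writing $A=(e_{11}y;q)^+_\infty$ and $B=(h_2y/e_{10};q)^+_\infty$, so that $G_0=BA^{-1}$, I will tabulate the $q$-scalars by which $e_{11}y$ and $h_2y/e_{10}$ commute past each generator on which $s_0$ acts nontrivially. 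The list is short: $x$ picks up a factor $q$ past both $A$ and $B$; $\tau_{11}$ picks up $q$ past $A$; $\tau_{10}$ and $\sigma_1$ each pick up $q^{-1}$ past $B$ (the latter tracing back to $\sigma_1 h_2=qh_2\sigma_1$); every other generator commutes with both $A$ and $B$. Applying the identity twice, once for each factor of $G_0$, then produces for every such $X$ precisely the combination of factors $1+e_{11}y$ and $1+q^{\pm1}h_2y/e_{10}$ that is anticipated by equation~\eqref{eq:classical_action}.

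The one subtle step, and the main obstacle, will be matching left/right placements of the factors. Conjugation by $G_0$ deposits the new factors on one side of $\tau_{10}^{-1}$, $\tau_{11}^{-1}$, or $x$, whereas equation~\eqref{eq:classical_action} writes them on the opposite side, and with a different power of $q$. I will reconcile the two presentations using the elementary $q$-intertwiners $\tau_{10}^{-1}(1+h_2y/e_{10})=(1+q^{-1}h_2y/e_{10})\tau_{10}^{-1}$ and $x(1+e_{11}y)=(1+q^{-1}e_{11}y)x$, which transpose a factor past $\tau_{10}$ or $x$ at the cost of exactly one power of $q$; this is precisely the shift that the conjugation produces, so the two sides agree. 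This is consistent with the operator ordering already fixed in equation~\eqref{eq:classical_action} through the Weyl-group relations, which explains why the adjoint presentation is so clean. The case $i=3$ then follows in the parallel way, with $A$, $B$, $\tau_{10}$, $\tau_{11}$, $\sigma_1$ replaced by the analogous quantities of $G_3$, completing the proof.
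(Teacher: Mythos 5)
Your proposal is correct and takes essentially the same route as the paper's proof: a generator-by-generator verification in which only $s_0$ and $s_3$ require work, carried out via the same $q$-factorial functional equation $f(u)X=Xf(q^{c}u)$ together with the telescoping $(z;q)^{+}_{\infty}=(1+z)(qz;q)^{+}_{\infty}$, and the reordering of linear factors past $\tau_i^{\pm1}$ and $x$ that you flag as the subtle step is exactly what the paper handles through identities such as $G_0^{-1}\big(G_0|_{e_{11}\to qe_{11}}\big)=1+e_{11}y$. The differences (your tabulation of commutation scalars versus the paper's parameter-shift substitution, and your appeal to an $x\leftrightarrow y$ ``duality'' for $i=3$ where the paper simply notes the case is similar) are organizational rather than substantive.
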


\begin{proof}
Put $G=\dfrac{(\beta y;q)^+_\infty}{(\alpha y;q)^+_\infty}$.
By the relation $f(y)x=xf(qy)$ we have\footnote{We sometimes omit the base $q$ as $(z)^+_{\infty}=(z;q)^+_{\infty}$.
Note that our definition of the $q$-factorial is different from the conventional one $(z;q)_\infty=\prod_{i=0}^\infty(1-q^iz)$ by signs, which also appears later.}
\begin{gather*}
G^{-1}r_0(x) G=G^{-1}x G=\dfrac{(\alpha y)^+_\infty}{(\beta y)^+_\infty}x \dfrac{(\beta y)^+_\infty}{(\alpha y)^+_\infty}
=x\dfrac{(\alpha qy)^+_\infty}{(\beta qy)^+_\infty} \dfrac{(\beta y)^+_\infty}{(\alpha y)^+_\infty}
=x\dfrac{1+\beta y}{1+\alpha y}.
\end{gather*}
This gives the action $s_0(x)$ when $\alpha=e_{11}$, $\beta=\frac{h_2}{e_{10}}$, i.e., $G=G_0$.
Fortunately, the formula $G_0^{-1}r_0(*)G_0$ recovers the correct transformation for the other variables as well.
For instance
\begin{gather*}
G_0^{-1}r_0(\tau_{10})G_0=G_0^{-1}\frac{\sigma_2}{\tau_{11}}G_0=
G_0^{-1}\Big(G_0\big{|}_{\begin{subarray}{l}h_1\to q h_1,\\ e_{11}\to q e_{11}\end{subarray}}\Big)\frac{\sigma_2}{\tau_{11}}=(1+e_{11}y)\frac{\sigma_2}{\tau_{11}}.
\end{gather*}
The case $i=3$ is similar and the other cases are obvious.
\end{proof}

\begin{Remark}\label{remark6}
Using the realization $s_i$ in Theorem \ref{thm:adjoint}, one can give another proof of the Weyl group relations as follows.
We consider the most non-trivial case $s_0s_3s_0=s_3s_0s_3$ as an example.
Since
\begin{gather*}
s_0(X)=G_0^{-1}r_0(X)G_0,
\\[.5ex]
s_3s_0(X)=G_3^{-1}\big(r_3G_0^{-1}\big)(r_3r_0X)(r_3G_0)G_3,
\\[.5ex]
s_0s_3s_0(X)=G_0^{-1}\big(r_0G_3^{-1}\big)\big(r_0r_3G_0^{-1}\big)(r_0r_3r_0X)(r_0r_3G_0)(r_0G_3)G_0,
\end{gather*}
we have $s_0s_3s_0(X)=G^{-1}(r_0r_3r_0X)G$, where
\begin{gather*}
G=(r_0r_3G_0)(r_0G_3)G_0=\dfrac{\big(\frac{h_1h_2}{e_1e_7e_{10}}y\big)^+_\infty} {\big(\frac{h_2}{e_{10}}y\big)^+_\infty}
\dfrac{\big(\frac{1}{e_1}x\big)^+_\infty}{\big(\frac{e_7e_{10}e_{11}}{h_1h_2}x\big)^+_\infty}
\dfrac{\big(\frac{h_2}{e_{10}}y\big)^+_\infty}{\big(e_{11}y\big)^+_\infty}.
\end{gather*}
Similarly we have $s_3s_0s_3(X)={\tilde G}^{-1}(r_3r_0r_3X){\tilde G}$, where
\begin{gather*}
{\tilde G}=(r_3r_0G_3)(r_3G_0)G_3
=\dfrac{\big(\frac{e_7}{h_1}x\big)^+_\infty}{\big(\frac{e_7e_{10}e_{11}}{h_1h_2}x\big)^+_\infty}
\dfrac{\big(\frac{h_1h_2}{e_1e_7e_{10}}y\big)^+_\infty}{(e_{11}y)^+_\infty}
\dfrac{\big(\frac{1}{e_1}x\big)^+_\infty}{\big(\frac{e_7}{h_1}x\big)^+_\infty}.
\end{gather*}
Due to the relation $r_0r_3r_0=r_3r_0r_3$, the relation $s_0s_3s_0=s_3s_0s_3$ is guaranteed if $G={\tilde G}$.
Rescaling $y \to \frac{e_{10}}{h_2}y$, $x \to \frac{h_1}{e_7}x$ and putting $a=\frac{h_1}{e_1e_7}$, $b=\frac{e_{10}e_{11}}{h_2}$, the relation $G={\tilde G}$ reduces to the following identity which may be considered as a version of the quantum dilogarithm identity (see, e.g.,~\cite{Kirillov} and references therein).
\end{Remark}

\begin{Lemma}
For non-commuting variables $yx=q xy$, we have
\begin{gather}\label{eq:prod-id}
\dfrac{(ay)^+_\infty}{(y)^+_\infty}
\dfrac{(ax)^+_\infty}{(bx)^+_\infty}
\dfrac{(y)^+_\infty}{(by)^+_\infty}=
\dfrac{(x)^+_\infty}{(bx)^+_\infty}
\dfrac{(ay)^+_\infty}{(by)^+_\infty}
\dfrac{(ax)^+_\infty}{(x)^+_\infty}.
\end{gather}
\end{Lemma}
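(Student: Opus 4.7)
The identity is a non-commutative pentagon relation for the $q$-exponential $E_q(z) := (z;q)^+_\infty$. The plan combines the two natural routes suggested by the surrounding material.

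The direct route starts by establishing the basic pentagon identity
\begin{gather*}
E_q(X)\, E_q(Y) = E_q(Y)\, E_q(XY)\, E_q(X), \qquad YX = qXY,
\end{gather*}
which can be verified by comparing both sides as formal power series via the $q$-binomial expansion $E_q(z) = \sum_{n\geq 0} q^{n(n-1)/2} z^n/(q;q)_n$ together with the normal-ordering rule $y^m x^n = q^{mn} x^n y^m$. One then applies this pentagon iteratively to~(\ref{eq:prod-id}): for the LHS, the pentagon with $(X,Y) = (ax, y)$ and $(X,Y) = (bx, y)$ transports the middle factor $E_q(ax) E_q(bx)^{-1}$ across the flanking $E_q(y)^{\pm 1}$, producing compensating factors $E_q(axy) E_q(bxy)^{-1}$. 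A symmetric set of moves on the RHS (using $(X,Y) = (ay, x)$ and $(X,Y) = (by, x)$) reduces it to the same normal-ordered expression, establishing equality. The main obstacle is the bookkeeping of the non-commuting $E_q$-factors produced by the pentagon moves.

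A cleaner but indirect alternative sidesteps this bookkeeping. By Theorem~\ref{thm:quantum-rep}, the braid relation $s_0 s_3 s_0 = s_3 s_0 s_3$ holds, so the computation in Remark~\ref{remark6} shows that $G$ and $\tilde G$ implement the same inner automorphism of $\mathcal K$. Consequently $\tilde G G^{-1}$ commutes with every element of $\mathcal K$; in particular it centralizes the subalgebra generated by $x$ and $y$ over $\C(h_i, e_i)$, whose center is $\C(h_i, e_i)$ itself for generic $q$. Evaluating at $x = y = 0$, where each $(z)^+_\infty$-factor reduces to $1$, yields $\tilde G = G$. The rescaling $y \to \frac{e_{10}}{h_2} y$, $x \to \frac{h_1}{e_7} x$, $a = \frac{h_1}{e_1 e_7}$, $b = \frac{e_{10} e_{11}}{h_2}$ specified at the end of Remark~\ref{remark6} then delivers the identity~(\ref{eq:prod-id}).
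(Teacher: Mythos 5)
Neither of your two routes, as written, is a complete proof. Route~1 identifies the right tool (the paper itself notes that \eqref{eq:prod-id} is a version of the quantum dilogarithm identity), and the pentagon relation $E(X)E(Y)=E(Y)E(XY)E(X)$ for $YX=qXY$, $E(z)=(z;q)^+_\infty$, is correctly stated; but the ``bookkeeping of the non-commuting $E_q$-factors'' that you set aside as the main obstacle \emph{is} the proof, and you do not carry it out. For the record, it closes in a few moves: conjugating the middle block of the left-hand side by $E(y)$ turns it into $E(ay)E(axy)E(ax)E(bx)^{-1}E(bxy)^{-1}E(by)^{-1}$, while on the right-hand side pushing $E(ay)$ and then $E(x)$ to the left (pentagon with the pairs $(bx,ay)$, $(x,ay)$, then $(ax,by)$, $(x,by)$) produces the same expression after the factors $E(abxy)^{\pm1}$ cancel and one uses that $x$, $ax$, $bx$ commute among themselves. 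So the plan is sound, but as submitted it is an unexecuted sketch, not a proof. Note also that this pentagon route differs from the paper's argument, which flips signs, expands via the $q$-binomial theorem so that both sides acquire the same normal ordering in $x$, $y$, observes that the resulting identity is then insensitive to the commutation relation, and verifies it in the commutative case by iterated use of Heine's transformation of ${}_2\varphi_1$.

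Route~2 should be discarded: it inverts the logical flow of the paper. The Lemma is introduced precisely so that, via Remark~\ref{remark6}, one obtains a proof of the braid relation $s_0s_3s_0=s_3s_0s_3$ independent of the brute-force check; deriving the Lemma from Theorem~\ref{thm:quantum-rep} therefore either is circular (if one regards Remark~\ref{remark6} as part of that theorem's justification) or reduces the Lemma to an unexhibited ``direct computation'' that you have not performed, which defeats the purpose of having the identity at all. Even granting Theorem~\ref{thm:quantum-rep}, your centrality argument needs more care than you give it: $G$ and $\tilde G$ are not elements of the skew field $\mathcal K$ but of a completion (formal power series in $x$, $y$ with coefficients in the parameters), the conclusion that a central element is a parameter-only ``constant'' requires $q$ not a root of unity, and the identity is obtained only for the special values $a=\frac{h_1}{e_1e_7}$, $b=\frac{e_{10}e_{11}}{h_2}$, so one must add the (easy, but unstated) remark that these are independent indeterminates, whence the identity holds for general central $a$, $b$. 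These caveats are all repairable; the dependence on the braid relation is the substantive objection. The fix is simply to complete Route~1 (or reproduce the paper's Heine-series argument), which stands on its own.
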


\begin{proof} By replacements $x \to -x$ and $y\to -y$, equation~(\ref{eq:prod-id}) can be written as
\begin{gather}\label{eq:prod-id-m}
\dfrac{(ay)_\infty}{(y)_\infty}
\dfrac{(ax)_\infty}{(bx)_\infty}
\dfrac{(y)_\infty}{(by)_\infty}=
\dfrac{(x)_\infty}{(bx)_\infty}
\dfrac{(ay)_\infty}{(by)_\infty}
\dfrac{(ax)_\infty}{(x)_\infty},
\end{gather}
where $(x)_\infty=\prod_{i=0}^{\infty}(1-q^i x)$, and we will prove equation~(\ref{eq:prod-id}) in this form.
We recall the $q$-binomial identity
\begin{gather}\label{eq:binom}
\dfrac{(az)_\infty}{(z)_\infty}
=\sum_{n\geq 0}{\dfrac{(a)_n}{(q)_n}z^n},\qquad
(a)_n=\frac{(a)_\infty}{(aq^n)_\infty},
\end{gather}
which follows by solving the difference equation $f(q z)=\frac{1-z}{1-az}f(z)$ for $f(z)=\frac{(az)_\infty}{(z)_\infty}$ in series expansion.
Using equation~(\ref{eq:binom}) and $yx=qxy$, the factors in equation~(\ref{eq:prod-id-m}) can be reordered as
\begin{gather*}
\dfrac{(ay)_\infty}{(y)_\infty}\dfrac{(ax)_\infty}{(bx)_\infty}
=\sum_{n\geq 0}\dfrac{(a)_n}{(q)_n}y^n\dfrac{(ax)_\infty}{(bx)_\infty}
=\sum_{n\geq 0}\dfrac{(a)_n}{(q)_n}\dfrac{(aq^nx)_\infty}{(bq^nx)_\infty}y^n
=\dfrac{(ax)_\infty}{(bx)_\infty}\sum_{n\geq 0}\dfrac{(a)_n}{(q)_n}\dfrac{(bx)_n}{(ax)_n}y^n,
\\
\dfrac{(ay)_\infty}{(by)_\infty}\dfrac{(ax)_\infty}{(x)_\infty}
=\dfrac{(ay)_\infty}{(by)_\infty}\sum_{n\geq 0}\dfrac{(a)_n}{(q)_n}x^n
=\sum_{n\geq 0}x^n\dfrac{(a)_n}{(q)_n}\dfrac{(aq^ny)_\infty}{(bq^ny)_\infty}
=\sum_{n\geq 0}x^n\dfrac{(a)_n}{(q)_n}\dfrac{(by)_n}{(ay)_n}\dfrac{(ay)_\infty}{(by)_\infty}.
\end{gather*}
Hence, equation~(\ref{eq:prod-id-m}) can be written as
\begin{gather}\label{eq:red-PPP}
(ax)_\infty\sum_{n\geq 0}\dfrac{(a)_n}{(q)_n}\dfrac{(bx)_n}{(ax)_n}y^n (y)_\infty
=(x)_\infty\sum_{n\geq 0}x^n\dfrac{(a)_n}{(q)_n}\dfrac{(by)_n}{(ay)_n}(ay)_\infty.
\end{gather}

Since the both hand sides of equation~(\ref{eq:red-PPP}) are written in the same ordering in $x$, $y$, whether the equality holds or not is independent of the commutation relation of $x$, $y$.
We will show it in~the commutative case, where equation~(\ref{eq:red-PPP}) can be written as
\begin{gather}\label{eq:iterated-Heine}
(ax)_\infty\ {}_2 \varphi_{1}\Big(\!\begin{array}{cc}{a,bx}\\{ax}\end{array},y\Big)\ (y)_\infty
=(x)_\infty\ {}_2 \varphi_{1}\Big(\!\begin{array}{cc}{a,by}\\{ay}\end{array},x\Big)\ (ay)_\infty,
\end{gather}
using the Heine's $q$-hypergeometric series
\begin{gather*}
{}_2\varphi_{1}\Big(\!\begin{array}{cc}{a,b}\\{c}\end{array},x\Big)
=\sum_{n\geq 0}\dfrac{(a)_n(b)_n}{(q)_n(c)_n}x^n.
\end{gather*}
Then equation~(\ref{eq:iterated-Heine}) can be confirmed via iterative use of the Heine's identity and the trivial symmetry relation
\begin{gather*}
{}_2\varphi_{1}\Big(\!\begin{array}{cc}{a,b}\\{c}\end{array},x\Big)
=\dfrac{(ax)_\infty}{(x)_\infty}
\dfrac{(b)_\infty}{(c)_\infty}
{}_2\varphi_{1}\Big(\!\begin{array}{cc}{c/b,x}\\{ax}\end{array},b\Big),\qquad
{}_2\varphi_{1}\Big(\!\begin{array}{cc}{a,b}\\{c}\end{array},x\Big)=
{}_2\varphi_{1}\Big(\!\begin{array}{cc}{b,a}\\{c}\end{array},x\Big).
\end{gather*}
The former is also obtained from the $q$-binomial identity.
\end{proof}

\begin{Proposition}
We put $k_1$, $k_2$ as the same as the classical case \eqref{eq:k1k2},
\begin{gather*}
k_1=x \dfrac{\tau_{10}}{\tau_{11}}, \qquad
k_2=y \dfrac{\tau_7\tau_8\tau_9}{\tau_1\tau_2\cdots \tau_6}.
\end{gather*}
Then $k_1$, $k_2$ are $W\big(E^{(1)}_8\big)$ invariant also in the quantum setting.
\end{Proposition}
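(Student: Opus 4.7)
The plan is to verify $s_i(k_1)=k_1$ and $s_i(k_2)=k_2$ one generator at a time. For $s_1,s_2$ and $s_4,\dots,s_8$, the action on $\{x,y,\tau_1,\dots,\tau_{11}\}$ is a transposition of two tau-variables lying within $\{\tau_1,\dots,\tau_6\}$ or within $\{\tau_7,\tau_8,\tau_9\}$, with $x,y,\tau_{10},\tau_{11}$ all fixed. Since the only non-trivial commutation involving $\tau_i$ is with the dual $e_i$, distinct tau-variables commute, so both monomials $\tau_7\tau_8\tau_9$ and $\tau_1\tau_2\cdots\tau_6$ (and hence $k_1,k_2$) are invariant under these generators.

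For $s_0$: the action fixes $y$ and $\tau_1,\dots,\tau_9$, so $s_0(k_2)=k_2$ automatically. For $k_1$, I would expand $s_0(k_1)=s_0(x)\,s_0(\tau_{10})\,s_0(\tau_{11})^{-1}$ and use that $\sigma_2$ commutes with $y,h_2,e_{10},e_{11},\tau_{10},\tau_{11}$ (its only non-commuting partner being $h_1$) while $\tau_{11}$ commutes with $y,h_2,e_{10}$. The factor $(1+ye_{11})^{-1}$ from $s_0(x)$ cancels directly against the $(1+ye_{11})$ at the head of $s_0(\tau_{10})$; sliding $\sigma_2$ past $\tau_{11}^{-1}$ and past $(1+yh_2/e_{10})^{-1}$ frees $(1+yh_2/e_{10})$ to cancel with its inverse, after which $\sigma_2\tau_{10}\sigma_2^{-1}=\tau_{10}$ leaves $x\tau_{11}^{-1}\tau_{10}=k_1$.

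For $s_3$: by the mirror situation, $s_3(k_1)=k_1$ trivially. The delicate check is $s_3(k_2)=k_2$, where now $y$ and $x$ do not commute. Setting $\alpha=xe_7/h_1$ and $\beta=x/e_1$, I would expand $s_3(k_2)$, move $\sigma_1$ and the fixed $\tau_i$ ($i\neq 1,7$) into place (all of these commute with the $x$-dependent factors and with $y$), and then commute $y$ past the remaining $(1+\beta)$ and $(1+\alpha)^{-1}$. The relation $yx=qxy$ converts these into $(1+q\beta)$ and $(1+q\alpha)^{-1}$, which would apparently break the cancellation; the rescue is the identity $\tau_1^{-1}(1+q\beta)=(1+\beta)\tau_1^{-1}$ (a direct consequence of $\tau_1 e_1=q^{-1}e_1\tau_1$) together with the analogous $\tau_7(1+q\alpha)^{-1}=(1+\alpha)^{-1}\tau_7$. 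These restore the unshifted factors, and the $(1+\alpha)$, $(1+\beta)$ pairs cancel, leaving $k_2$.

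The main obstacle is exactly this bookkeeping for $s_3$: two independent sources of $q$-factors, one from $yx=qxy$ and one from $\tau_i e_i=q^{-1}e_i\tau_i$, must cancel precisely. This is not accidental but matches the operator ordering forced on equation \eqref{eq:classical_action} by the Weyl group relations in Theorem \ref{thm:quantum-rep}, so it may equally be viewed as a consistency check of those orderings.
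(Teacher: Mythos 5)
Your proposal is correct and follows essentially the same route as the paper: one checks invariance generator by generator, the permutation generators acting trivially, and the only nontrivial verifications being $s_0(k_1)$ and $s_3(k_2)$, done by direct commutation bookkeeping with the relations \eqref{eq:qcom}. The only (cosmetic) difference is in the $s_3$ check: the paper orders $k_2$ as $\frac{\tau_7}{\tau_1}\,y\,\frac{\tau_8\tau_9}{\tau_2\cdots\tau_6}$ so that the $x$-dependent factors cancel among themselves and no $q$-shifts ever appear, whereas you let the shift from $yx=qxy$ arise and then cancel it against the shift from $\tau_1e_1=q^{-1}e_1\tau_1$ (and $\tau_7e_7=q^{-1}e_7\tau_7$) via the identities you state, which is an equally valid way to organize the same computation.
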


\begin{proof} We will check only the nontrivial actions and they go as
\begin{gather*}
s_0\bigg(x \dfrac{\tau_{10}}{\tau_{11}}\bigg)=x \dfrac{1+y\frac{h_2}{e_{10}}}{1+y e_{11}}(1+y e_{11})\dfrac{\sigma_2}{\tau_{11}}
\dfrac{1}{1+y \frac{h_2}{e_{10}} }\dfrac{\tau_{10}}{\sigma_2}=x \dfrac{\tau_{10}}{\tau_{11}},
\end{gather*}
and
\begin{gather*}
s_3\bigg(\dfrac{\tau_7}{\tau_1}y\dfrac{\tau_8\tau_9}{\tau_2\cdots\tau_6}\bigg)=
\dfrac{\tau_7}{\sigma_1}\dfrac{1}{1+x \frac{e_7}{h_1}}\dfrac{\sigma_1}{\tau_7}\bigg(1+\dfrac{x}{e_1}\bigg)\dfrac{1+x \frac{e_7}{h_1}}{1+\frac{x}{e_1}}y \dfrac{\tau_8\tau_9}{\tau_2\cdots\tau_6}=\dfrac{\tau_7}{\tau_1}y\dfrac{\tau_8\tau_9}{\tau_2\cdots\tau_6}.
 \tag*{\qed}
\end{gather*}
\renewcommand{\qed}{}
\end{proof}

Due to this proposition, the actions of $w \in W\big(E^{(1)}_8\big)$ on $x, y$ can be reduced to the actions on~$\sigma_i$, $\tau_i$ as in the classical case.

\section{Non-logarithmic property}\label{section3}

From the several examples of the quantum polynomials as in equations~(\ref{eq:ex1_qu}) and~(\ref{eq:ex2_qu}), one observes an interesting factorization in their coefficients, which was utilized in constructing quantum curves in~\cite{Moriyama}.
We will clarify the meaning of such factorizations from the viewpoint of the $q$-difference operators.

Consider a $q$-difference equation $D\psi(x)=0$, $D=\sum_{i=0}^{d_1}x^iA_i(y)$, $(yx=q xy)$.
We look for a~solution $\psi(x)$ around $x=0$ of the form
\begin{gather*}
\psi(x)=x^\rho\sum_{j=0}^{\infty}c_jx^j \qquad (c_0\neq 0).
\end{gather*}
From the coefficient of $x^{\rho+k}$ in the equation $D\psi(x)=0$, we have
\begin{gather*}
\sum_{i+j=k}A_i\big(q^{\rho+j}\big)c_j =A_k\big(q^{\rho}\big)c_0+A_{k-1}\big(q^{\rho+1}\big)c_1+\cdots+A_0\big(q^{\rho+k}\big)c_k=0,
\end{gather*}
where $A_i(y)=0$ for $i>d_1$.
The (multiplicative) exponents $y=q^\rho$ are determined as the zeros of $A_0(y)$.
Then the coefficients $c_1, c_2, \dots$ will be determined recursively.
For $c_k$, we have the following cases:
\begin{itemize}\itemsep=0pt
\item[$(1)$] If $A_0\big(q^{\rho+k}\big)\neq 0$, then $c_k$ is uniquely determined from $c_0, c_1, \dots, c_{k-1}$.
\item[$(2a)$] If $A_0\big(q^{\rho+k}\big)= 0$ and $X_k:=A_k\big(q^{\rho}\big)c_0+A_{k-1}\big(q^{\rho+1}\big)c_1+\cdots+A_1\big(q^{\rho+k-1}\big)c_{k-1}\neq 0$, then the equation for $c_k$ has no solution and we do not have the power series solution (one should consider a solution with logarithmic terms in $x$).
\item[$(2b)$] If $A_0\big(q^{\rho+k}\big)= 0$ and $X_k=0$, then the coefficient $c_k$ is free and we still have series solutions with exponents $y=q^\rho, q^{\rho+k}$.
\end{itemize}
For the last case $(2b)$, the difference operator $D$ admits a non-logarithmic solution around $x=0$ and $x=0$ is called ``non-logarithmic'' singularity of $D$.
Non-logarithmic singularities around $x=\infty$ (or $y=0$ or $y=\infty$) are defined similarly.
If we apply the condition of non-logarithmic singularities to the case with successive exponents, coefficients of the $q$-difference operator $D$ are constrained strongly by the non-logarithmic properties of its solution as follows.

\begin{Proposition} For a difference operator $D=\sum_{i=0}^{d_1}x^iA_i(y)$, we have
\begin{itemize}\itemsep=0pt
\item[$(1)$] $D$ has non-logarithmic singularities at $x=0$ with $y=a, qa, \dots, q^{m-1}a
\Leftrightarrow A_i(y) \propto \prod_{j=0}^{m-i-1}(y-q^ja)$ for $0\leq i\leq m-1$,
\item[$(2)$] $D$ has non-logarithmic singularities at $x=\infty$ with $y=a,q^{-1}a, \dots, q^{-m+1}a
\Leftrightarrow A_i(y) \propto \prod_{j=0}^{m-i-1}(y-q^{-j}a)$ for $d_1-m+1\leq i\leq d_1$.
\end{itemize}
Similarly, for a difference operator $D=\sum_{i=0}^{d_2}B_i(x)y^i$, we have
\begin{itemize}\itemsep=0pt
\item[$(3)$] $D$ has non-logarithmic singularities at $y=0$ with $x=a, qa, \dots, q^{m-1}a
\Leftrightarrow B_i(x) \propto \prod_{j=0}^{m-i-1}(x-q^ja)$ for $1\leq i\leq m$,
\item[$(4)$] $D$ has non-logarithmic singularities at $y=\infty$ with $x=a, q^{-1}a, \dots, q^{-m+1}a
\Leftrightarrow B_i(x) \propto \prod_{j=0}^{m-i-1}(x-q^{-j}a)$ for $d_2-m+1\leq i\leq d_2$.
\end{itemize}
\end{Proposition}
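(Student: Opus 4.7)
The plan is to derive case (1) directly from the indicial/recursion analysis sketched in the paragraph preceding the proposition, and then to reduce cases (2), (3), (4) to case (1) by the obvious substitutions $x \mapsto x^{-1}$ and $x \leftrightarrow y$. First I would note that inserting a formal power series $\psi(x) = x^\rho \sum_{k \geq 0} c_k x^k$ with $c_0 \ne 0$ into $D\psi = 0$ and matching the coefficient of $x^{\rho+k}$ gives the recursion
\begin{gather*}
A_0\big(q^{\rho+k}\big) c_k + X_k = 0, \qquad X_k = \sum_{i=1}^{k} A_i\big(q^{\rho+k-i}\big) c_{k-i},
\end{gather*}
so the condition ``$x = 0$ is a non-logarithmic singularity with exponents $y = a, qa, \dots, q^{m-1}a$'' means, by case (2b) of the list preceding the proposition, that for every $\ell \in \{0, 1, \dots, m-1\}$ and every $k \in \{0, 1, \dots, m-1-\ell\}$, both $A_0\big(q^{\ell+k} a\big) = 0$ and $X_k = 0$ must hold when $q^\rho = q^\ell a$, with $c_0$ (hence every resonant $c_k$) a free parameter.

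For the direction ($\Leftarrow$), assume $A_i(y) \propto \prod_{j=0}^{m-i-1}(y - q^j a)$ for $0 \le i \le m-1$. Then at $q^\rho = q^\ell a$ each term $A_i\big(q^{\ell+k-i} a\big) c_{k-i}$ of $X_k$ carries the factor $\big(y - q^{\ell+k-i} a\big)$, which is present in the product defining $A_i$ as long as $\ell + k - i \le m - i - 1$, i.e., $\ell + k \le m-1$ — precisely the range where we need vanishing. Hence $X_k = 0$ identically in $c_0, \dots, c_{k-1}$ and the recursion produces $m$ independent non-logarithmic solutions.

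For the direction ($\Rightarrow$), I would proceed by induction on $i$. The base case $i = 0$ is immediate: $A_0$ must vanish at all $m$ exponents, giving the factorization. Assuming the factorization for $A_0, \dots, A_{i-1}$, pick $\ell \in \{0, \dots, m-1-i\}$ and examine $X_i$ at $q^\rho = q^\ell a$. For $1 \le j \le i-1$, the argument $q^{\ell+i-j} a$ satisfies $0 \le \ell + i - j \le (m-1-i) + i - j = m-1-j$, so it lies in the root set of $A_j$ by the inductive hypothesis; thus all cross terms vanish and only $A_i\big(q^\ell a\big) c_0$ survives. Since $c_0$ is free and $X_i = 0$, we conclude $A_i\big(q^\ell a\big) = 0$ for $\ell = 0, \dots, m-1-i$, yielding the desired factorization of $A_i$.

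The main obstacle is the bookkeeping in this inductive step, specifically verifying that the arguments $q^{\ell+i-j} a$ of the cross terms always fall inside the vanishing locus already established for $A_j$, so that $A_i\big(q^\ell a\big) c_0$ is correctly isolated. Once case (1) is established, case (2) follows by the substitution $x \mapsto x^{-1}$ (which reverses the ordering of the $A_i$ and inverts $q$), while cases (3) and (4) are the symmetric counterparts obtained by expanding in $y$ rather than $x$ — equivalently by swapping $x \leftrightarrow y$, which also flips $q$ to $q^{-1}$ in the commutation relation, a swap the entire preceding argument absorbs cleanly.
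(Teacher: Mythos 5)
Your proposal is correct and follows essentially the same route as the paper: analyze the Frobenius-type recursion for power-series solutions and force termwise vanishing at the resonances with successive exponents (the paper states this telegraphically for case (1) and says the other factorizations ``follow easily,'' while you spell out both directions, the $(\Rightarrow)$ via induction on $i$). One small caution on your shortcut for (3)--(4): in $D=\sum_i B_i(x)y^i$ the coefficients sit to the \emph{left} of the powers of $y$, so the literal $x\leftrightarrow y$ swap of case (1) lands on $\sum_i y^i\tilde A_i(x)$ and one must still commute the coefficients past $y^i$ (shifting their arguments by powers of $q$, which is why the index range in (3) reads $1\le i\le m$ rather than $0\le i\le m-1$); your primary route of simply redoing the expansion in $y$ absorbs this automatically.
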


\begin{proof}
Consider the case (1) (the other cases are similar).
For the non-logarithmic property with successive exponents, the recursion relations for the power series solution
{\samepage\begin{gather*}
A_0(y)c_0=0,
\\[1ex]
A_1(y)c_0+A_0(qy)c_1=0,
\\[1ex]
\cdots\cdots\cdots\cdots\cdots\cdots\cdots\cdots
\\[1ex]
A_{m-1}(y)c_0+\cdots +A_0\big(q^{m-1}y\big)c_{m-1}=0,
\end{gather*}}\noindent
should be satisfied termwise with $m$ free coefficients: $c_0, \dots, c_{m-1}$.
From the first relation we have $A_0(y)\propto \prod_{j=0}^{m-1}(y-q^ja)$, and the other factorizations also follow easily.
\end{proof}

In other words, a $q$-difference operator $D=\sum_{i=0}^{d_1}x^iA_i(y)$ with boundary coefficients $A_0(y)$, $A_{d_2}(y)$ having zeros successive in powers of $q$, is non-logarithmic iff suitable parts of the zeros penetrate into the internal coefficients.
We have similar properties for a difference operator $D=\sum_{i=0}^{d_2}B_i(x)y^i$ also.
The non-logarithmic property of $q$-difference operators plays important roles in the following characterization of quantum polynomials and also in~\cite{NagY,NRY,Take,Yamada17} etc.

\section[The F-polynomials]{The $\boldsymbol F$-polynomials}\label{section4}

Here we study the quantum analog of the polynomials $F_\lambda(x,y)$ in Proposition \ref{phi_lambda}.

\begin{Definition}
For each degree/multiplicity data $\lambda=((d_1,d_2), (m_1,\dots,m_{11})) \in P$, we define a non-commutative polynomial $F=F_{\lambda}(x,y)=F_{\lambda}(x,y; \{h_i,e_i\})$
by the following conditions:
\begin{itemize}\itemsep=0pt
\item[$(x)_{\lambda}$] Collecting terms with the same power of $x$, the polynomial $F$ takes the form
\begin{gather*}
F=
\sum_{i=0}^{d_1} x^i
\prod_{t=i}^{m_{11}-1}\big(1+q^te_{11}y\big)
\prod_{t=d_1-m_{10}}^{i-1}\bigg(1+q^t\frac{h_2}{e_{10}}y\bigg)
U_i(y),
\end{gather*}
where $U_i(y)$ is a polynomial\footnote{If there appear many polynomials $U_i(y)$ of the same degree, they should be considered as different ones.
This applies to $V_i(x)$ in equation~(\ref{eq:y-cond}) as well.} in $y$ of degree $d_2-(i-d_1+m_{10})_{+}-(m_{11}-i)_{+}$.

\item[$(y)_{\lambda}$] Collecting terms with the same power of $y$, the polynomial $F$ takes the form
\begin{gather}\label{eq:y-cond}
F=\sum_{i=0}^{d_2}
\prod_{k=1}^6\prod_{t=i-m_k}^{-1}\bigg(1+q^t\frac{1}{e_k}x\bigg)
\prod_{k=7}^9\prod_{t=0}^{i-d_2+m_k-1}\bigg(1+q^t\frac{e_k}{h_1}x\bigg)
V_i(x)\, y^i,
\end{gather}
where $V_i(x)$ is a polynomial in $x$ of degree $d_1-\sum_{k=1}^6(m_{k}-i)_{+}-\sum_{k=7}^9(i-d_2+m_{k})_{+}$.
\end{itemize}
In these conditions, $(x)_{+}=\max(x,0)$ and the empty product is $1$: $\prod_{t=a}^b(*)=1$ $(a>b)$.
\end{Definition}

\begin{Remark}
For the $q=1$ case, it is easy to see that the conditions $(x)_{\lambda}$, $(y)_\lambda$ reduce to the conditions specified by the degree/multiplicity data $\lambda=(d_i,m_i)$.
Hence the quantum polynomial $F_\lambda(x,y)$ reduces to the classical polynomial $F_\lambda(x,y)$ in Proposition \ref{phi_lambda}.
\end{Remark}

\begin{Proposition}\label{e8weylorbit}
Let $\Lambda$ be the $W\big(E^{(1)}_8\big)$-orbit of $\{E_1, \dots, E_{11}\}$.
Then for $\lambda \in \Lambda$, the polyno\-mial~$F_\lambda(x,y)$ exists and is unique up to a normalization.
We will normalize it by $F_\lambda(0,0)=1$.
\end{Proposition}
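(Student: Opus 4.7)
The plan is to derive existence from the quantum Weyl group action on tau variables in Theorem \ref{thm:quantum-rep}, and uniqueness from a linear dimension count on conditions $(x)_\lambda$ and $(y)_\lambda$.

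For existence, given $\lambda \in \Lambda$, I would choose $w \in W(E_8^{(1)})$ and $j \in \{1,\dots,11\}$ with $w(E_j) = \lambda$ in the Picard lattice. The quantum analog of Proposition \ref{phi_lambda}, exemplified by equations \eqref{eq:ex1_qu}--\eqref{eq:ex2_qu}, asserts that $w(\tau_j) = F(x,y)\,\tau^\lambda$ for a non-commutative polynomial $F$; normalizing by $F(0,0)=1$ gives a candidate $F_\lambda$. To verify it satisfies $(x)_\lambda$ and $(y)_\lambda$, I would induct on the length $\ell(w)$ of a reduced expression. The base case $w=\mathrm{id}$ gives $F_{E_j} = 1$, which matches both conditions trivially. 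For the inductive step, since $s_1, s_2, s_4, \dots, s_8$ act trivially on $x,y$, only the reflections $s_0$ and $s_3$ modify the polynomial, and one must check that their explicit action from Theorem \ref{thm:quantum-rep} converts the factored form of $F_\mu$ prescribed by $(x)_\mu, (y)_\mu$ into that prescribed by $(x)_{s_i(\mu)}, (y)_{s_i(\mu)}$. The $q$-commutation $yx = qxy$ is precisely what generates the $q$-shifted prefactors $(1+q^t e_{11}y)$ and $(1+q^t \frac{h_2}{e_{10}}y)$ appearing in $(x)_\lambda$.

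For uniqueness, conditions $(x)_\lambda$ and $(y)_\lambda$ are linear in the coefficients of $F$, cutting out linear subspaces $V_{(x)}, V_{(y)}$ of the space of bidegree-$(d_1,d_2)$ polynomials. The dimension of each subspace is computable directly from the prescribed degrees of the free factors $U_i(y)$ and $V_i(x)$. Using $\lambda^2 = -1$, which characterizes the orbit $\Lambda$, a direct count should yield $\dim (V_{(x)} \cap V_{(y)}) = 1$. This is the quantum lift of the classical Riemann--Roch fact $h^0(X, \mathcal{O}(\lambda)) = 1$ for exceptional $(-1)$-classes.

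The main obstacle is the inductive step for existence: the $q$-shifts introduced by non-commutativity make the combinatorics of how the prefactors over the four boundary lines $x=0,\infty$ and $y=0,\infty$ recombine under a simple reflection fairly intricate, and each factor like $(1+q^t \frac{h_2}{e_{10}}y)$ in $F_\mu$ must be tracked carefully through $s_0$ to produce a correctly $q$-shifted factor in $F_{s_0(\mu)}$. A complementary route is to interpret $(x)_\lambda$ and $(y)_\lambda$ via the non-logarithmic characterization of Section \ref{section3}, where the prefactors are exactly consecutive non-logarithmic zeros of the associated $q$-difference operator; this would both clarify the meaning of the conditions and streamline the counting required for uniqueness.
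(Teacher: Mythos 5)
Your uniqueness step is in fact the whole of the paper's proof: the paper reads $(x)_\lambda$, $(y)_\lambda$ as linear vanishing conditions on the $(d_1+1)(d_2+1)$ coefficients and computes ${\rm dim}=(d_1+1)(d_2+1)-\sum_k m_k(m_k+1)/2=\frac12\lambda\cdot\lambda+\frac12\lambda\cdot\delta_{\rm Red}+1$, which equals $1$ on $\Lambda$, and this single count is used for both existence and uniqueness. Two comments on your version. First, $\lambda\cdot\lambda=-1$ alone does not close the count (and does not characterize $\Lambda$): you also need $\lambda\cdot\delta_{\rm Red}=1$ with $\delta_{\rm Red}=2H_1+2H_2-\sum_{i}E_i$, i.e., the linear ``adjunction'' term in your Riemann--Roch analogy; this extra input does hold on $\Lambda$ because the lattice action preserves $\delta_{\rm Red}$ and $E_i\cdot\delta_{\rm Red}=1$, so your conclusion is unaffected, but it must be said. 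Second, your existence route is genuinely different from the paper's: the paper gets existence for free from the same count (the count is a lower bound on the dimension of the solution space), whereas your induction on $\ell(w)$ needs exactly the covariance statement that the $s_0$-, $s_3$-actions carry solutions of $(x)_\mu$, $(y)_\mu$ to solutions of $(x)_{s_i^*\mu}$, $(y)_{s_i^*\mu}$ --- that is Theorem~\ref{thm:sF}, the paper's main theorem, proved later via the reshuffling of $q$-shifted factors together with the adjoint-action/non-logarithmic argument, and proved there without invoking this proposition, so your plan is not circular but it front-loads the hardest part of the paper, which you only sketch and yourself flag as the main obstacle. What your route buys is a constructive description of $F_\lambda$ from $F_{E_j}=1$ and an explanation of the $q$-shifted prefactors; what the paper's route buys is brevity. (Both you and the paper treat the naive count as the exact dimension, i.e., the independence of the linear conditions is not verified; since this matches the printed proof, it is not a gap relative to the paper.)
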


\begin{proof}
The conditions $(x)_\lambda$, $(y)_\lambda$ give linear equations\footnote{In the commutative case, this is known as the linear system $|\lambda|$.} (vanishing conditions) for $F_\lambda(x,y)$.
Counting the numbers of coefficients and equations, the dimension of the solution is given by
\begin{gather}\label{eq:dim}
{\rm dim}=(d_1+1)(d_2+1)-\sum_{k=1}^{11} \frac{m_k(m_k+1)}{2}
=\dfrac{1}{2}\lambda\cdot\lambda+\dfrac{1}{2}\lambda\cdot \delta_{\rm Red}+1,
\end{gather}
where $\lambda$ is in equation~(\ref{eq:add_lambda}), dot($\cdot$) is the intersection pairing and $\delta_{\rm Red}=2 H_1+2H_2-\sum_{i=1}^{11} E_i$.
Then, for $\lambda \in \Lambda$ we have ${\rm dim}=1$, since $\lambda\cdot \lambda=-1$ and $\lambda\cdot \delta_{\rm Red}=1$.
\end{proof}

We use a notation $s^*_i$ to represent the induced action on the data $\lambda=(d_i,m_i)$ defined by~$s_i\big(e^{\lambda}\big)=e^{s^*_i \lambda}$, hence $s_i\big(\tau^\lambda\big)\vert_{x=y=0}=\tau^{s^*_i \lambda}$.
It is explicitly given as
\begin{gather*}
s^*_0=\{d_2\mapsto d_1+d_2-m_{10}-m_{11}, \,
{m}_{10}\mapsto d_1-m_{11}, \,
{m}_{11}\mapsto d_1-m_{10}\},
\\[.5ex]
s^*_1=\{m_8 \leftrightarrow m_9\}, \qquad
s^*_2=\{m_7 \leftrightarrow m_8\},
\\[.5ex]
s^*_3=\{d_1\mapsto d_1+d_2-m_{1}-m_{7}, \,
{m}_{1}\mapsto d_2-m_{7}, \,
{m}_{7}\mapsto d_2-m_{1}\}, \qquad
s^*_4=\{m_1 \leftrightarrow m_2\},
\\[.5ex]
s^*_5=\{m_2 \leftrightarrow m_3\}, \qquad
s^*_6=\{m_3 \leftrightarrow m_4\}, \qquad
s^*_7=\{m_4 \leftrightarrow m_5\}, \qquad
s^*_8=\{m_5 \leftrightarrow m_6\}.
\end{gather*}
The following is the main result of this paper.
\begin{Theorem}\label{thm:sF}
Let $F_{\lambda}(x,y)$ be a polynomial satisfying the conditions $(x)_\lambda$, $(y)_\lambda$.
Then for each simple reflection $s_i \in W\big(E_8^{(1)}\big)$, the function $F_{s^*_i{\lambda}}(x,y)$ defined by
\begin{gather}\label{eq:sFaction}
s_i\big(F_\lambda(x,y) \tau^\lambda\big)=F_{s^*_i{\lambda}}(x,y)\tau^{s^*_i{\lambda}}, \qquad
\tau^{\lambda}=\dfrac{\sigma_1^{d_1}\sigma_2^{d_2}}{\tau_1^{m_1}\cdots \tau_{11}^{m_{11}}},
\end{gather}
is also a polynomial in $x$, $y$ and satisfy the condition $(x)_{s^*_i{\lambda}}$, $(y)_{s^*_i{\lambda}}$.
In particular, for $\lambda \in \Lambda$, the unique normalized polynomials $F_\lambda(x,y)$ can be obtained by the actions~\eqref{eq:sFaction} from the initial condition $F_{e_i}=1$.
\end{Theorem}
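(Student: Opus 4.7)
The strategy is to reduce to the two nontrivial generators $s_0$ and $s_3$, and prove covariance for $s_3$ via a direct computation using the adjoint realization of Theorem~\ref{thm:adjoint}; the $s_0$-case follows by the strictly parallel analysis exchanging the roles of $x,y$ and conditions $(x)_\lambda\leftrightarrow(y)_\lambda$. For $i\in\{1,2,4,5,6,7,8\}$ the reflection $s_i$ is a simple index swap on $(e_j,\tau_j)$ that fixes $x,y$ and the other parameters, so conditions $(x)_\lambda$ and $(y)_\lambda$ transform into $(x)_{s_i^*\lambda}$ and $(y)_{s_i^*\lambda}$ term-by-term, giving $F_{s_i^*\lambda}$ of the required form.

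For $s_3$, I would use $s_3=\mathrm{Ad}(G_3^{-1})\circ r_3$ with $G_3=(x/e_1;q)^+_\infty/(xe_7/h_1;q)^+_\infty$. Since $r_3(\tau^\lambda)=\tau^{s_3^*\lambda}$ acts purely monomially, the problem reduces to analyzing $G_3^{-1}r_3(F_\lambda)G_3$ as a polynomial in $x,y$, with the explicit $q$-factors from commuting $\tau^{s_3^*\lambda}$ past $G_3$ tracked separately. Writing $F_\lambda$ in the $(y)_\lambda$-form (natural because $s_3^*$ preserves $d_2$), the adjoint sends $y\mapsto\tfrac{1+xe_7/h_1}{1+x/e_1}\,y$ so that $y^i$ picks up the $q$-factorial ratio $(xe_7/h_1;q)^+_i/(x/e_1;q)^+_i$, while $r_3$ interchanges the $k=1$ and $k=7$ product factors of the prefactor $P_i^y(x)$. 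Using the $q$-Pochhammer multiplicativity $(a;q)^+_m(aq^m;q)^+_n=(a;q)^+_{m+n}$, these contributions reassemble into the prefactor $P_i^{y,s_3^*\lambda}(x)$ required by $(y)_{s_3^*\lambda}$, up to a residual factor $\prod_{t=i-m_1}^{i-1}(1+q^t xe_7/h_1)/\prod_{t=i-d_2+m_7}^{i-1}(1+q^t x/e_1)$ to be absorbed into the new unknown $V_i^{\mathrm{new}}(x)$.

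The main obstacle is showing that this residual is actually polynomial, i.e., that the putative denominator $\prod(1+q^t x/e_1)$ cancels against factors already present in the original $V_i^y(x)$. This divisibility is forced by the simultaneous satisfaction of $(x)_\lambda$ and $(y)_\lambda$ for the input: the non-logarithmic criterion of Section~\ref{section3}, applied cross-wise between the $x$- and $y$-expansions of $F_\lambda$, imposes exactly the required zeros at $x=-q^{-t}e_1$. Once polynomiality is established, condition $(x)_{s_3^*\lambda}$ of the output is verified by the dual computation starting from the $(x)_\lambda$-expansion, with the substitution $h_2\mapsto h_1h_2/(e_1e_7)$ and the $y$-twist recombining with the boundary factors $(1+q^t e_{11}y)$ and $(1+q^t h_2/e_{10}\cdot y)$ via the same multiplicativity. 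Finally, Proposition~\ref{e8weylorbit} ensures that for $\lambda\in\Lambda$ the output is the unique normalized $F_{s_3^*\lambda}$, so iteration from $F_{E_i}=1$ generates every $F_\lambda$ on the Weyl orbit.
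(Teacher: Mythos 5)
Your overall skeleton (reduce to $s_0$, $s_3$, treat the permutation generators trivially, recombine $q$-Pochhammer ranges) matches the paper, but the key cancellation step is wrong. The divisibility you invoke does not hold and is not needed: the conditions $(x)_\lambda$, $(y)_\lambda$ leave the polynomials $V_i(x)$ completely free apart from their degree, and the theorem is asserted for \emph{every} solution of these linear conditions, so you cannot assume $V_i$ vanishes at $x=-q^{-t}e_1$. A concrete counterexample is the two-parameter family $F_\lambda=c_0(1+e_{11}y)+c_1x\big(1+\tfrac{h_2}{e_{10}}y\big)$ of equation~(\ref{eq:ex-two-para}), whose coefficients are arbitrary; no cross-wise non-logarithmic argument forces zeros there. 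The factor you left as a ``residual'', namely $\prod_{t=i-m_1}^{i-1}\big(1+q^t\tfrac{e_7}{h_1}x\big)\big/\prod_{t=i-d_2+m_7}^{i-1}\big(1+q^t\tfrac{1}{e_1}x\big)$, is exactly the inverse of what the transformation of the tau-monomial supplies: $s_3(\tau^\lambda)$ contributes $\prod_{t=i-m_1}^{i-1}\big(1+q^t\tfrac{e_7}{h_1}x\big)^{-1}\prod_{t=i-d_2+m_7}^{i-1}\big(1+q^t\tfrac{1}{e_1}x\big)$ in the coefficient of $y^i$ (equivalently, in the adjoint picture, the $q$-shifted copy of $G_3$ produced when $\tau^{s^*_3\lambda}$ is commuted through it). You promised to track these factors ``separately'' but never used them; once they are included, the three products combine via $\prod_{t=u}^{v-1}(*)\big[\prod_{t=v}^{w-1}(*)/\prod_{t=u}^{w-1}(*)\big]=\prod_{t=v}^{u-1}(*)$ into the prefactor demanded by $(y)_{s^*_3\lambda}$, with manifest polynomiality and no condition on $V_i$ at all. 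This is precisely the point of including the tau variables.

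The second condition is also not handled correctly. Your ``dual computation'' for $(x)_{s^*_3\lambda}$ starting from the $(x)_\lambda$-expansion cannot work as stated: $s_3$ fixes $x$ and twists $y$ by the rational function $\tfrac{1+xe_7/h_1}{1+x/e_1}$, so the boundary factors $(1+q^te_{11}y)$, $(1+q^t\tfrac{h_2}{e_{10}}y)$ do not ``recombine via multiplicativity''; the twist destroys the grading by powers of $x$ and introduces denominators in every term. The paper's argument instead reformulates the transversal condition as (i) the factorized form of the top and bottom coefficients in the relevant variable, checked directly from the limits of $s_i$ acting on $x$ or $y$, plus (ii) the non-logarithmic property at the two boundary points, which is preserved because $s_i$ is the adjoint action by the regular $q$-factorial $G_i$ of Theorem~\ref{thm:adjoint} (with the pseudo-constant rewriting $G_0=C(y)y^\nu(q/(ye_{11}))^+_\infty/(q/(y\tfrac{h_2}{e_{10}}))^+_\infty$ to treat the point at infinity). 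So the non-logarithmic machinery of Section~\ref{section3} is indeed essential, but it enters here, in verifying the condition transversal to the direct computation, not in the spurious divisibility claim. The final uniqueness/normalization step via Proposition~\ref{e8weylorbit} is fine.
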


\begin{Remark}
The polynomial $F_{\lambda}(x,y)$ is not a function but a section of a line bundle $\mathcal{L}_{\lambda}$ on~$X$, and equation~(\ref{eq:sFaction}) can be considered as its trivialization in the commutative case~\cite{NY:cmp,NY:birat}.
Theorem~\ref{thm:sF} suggests a non-commutative analog of such a geometric understanding.
\end{Remark}

\begin{Example}
For $e^\lambda=\frac{h_1h_2}{e_{10}e_{11}}$, the corresponding $F_\lambda$ has two parameters:
\begin{gather}\label{eq:ex-two-para}
\begin{array}{l}
F_{\lambda}
=c_0 (1+e_{11} y)+c_1 x\bigg(1+\dfrac{h_2}{e_{10}}y\bigg).
\end{array}
\end{gather}
Then, we have
\[
s_3\bigg(F_\lambda\frac{\sigma_1\sigma_2}{\tau_{10}\tau_{11}}\bigg)=
\tilde{F}_{\tilde\lambda}\frac{\sigma_1^2\sigma_2}{\tau_1\tau_7\tau_{10}\tau_{11}},
\]
where
\begin{align*}
\tilde{F}_{\tilde\lambda}&=(c_0+c_1x)\bigg(1+\dfrac{1}{q e_1}x\bigg)+\bigg(1+\dfrac{e_7}{h_1}x\bigg)\bigg(c_0e_{11}+c_1 \dfrac{h_1h_2}{e_1e_7e_{10}}x\bigg)y
\\
&=c_0(1+e_{11}y)+x\bigg\{c_0\bigg(\dfrac{1}{q e_1}+\dfrac{e_7e_{11}}{h_1}y\bigg)+c_1\bigg(1+\dfrac{h_1h_2}{e_1e_7e_{10}}y\bigg)\bigg\}+c_1\dfrac{1}{q e_1}x^2\bigg(1+q \dfrac{h_2}{e_{10}}y\bigg).
\end{align*}
We see that the polynomial $\tilde{F}_{\tilde\lambda}$ gives a general solution for the condition $(x)_{\tilde{\lambda}}$, $(y)_{\tilde{\lambda}}$, where $e^{\tilde{\lambda}}=s_3\big(e^\lambda\big)=\frac{h_1^2h_2}{e_1e_7e_{10}e_{11}}$.
\end{Example}

\begin{proof}[Proof of Theorem \ref{thm:sF}.]
We will consider the cases $s_0$ and $s_3$ (other cases are obvious).

\medskip\noindent
{\it Case $s_0$}.
Let $F=F_\lambda(x,y)$ be a polynomial satisfying the condition $(x)_\lambda$.
We compute the action of $s_0$ on $F \tau^{\lambda}$.
For $F$, we have
\begin{align*}
F&=\sum_{i=0}^{d_1}x^i\prod_{t=i}^{m_{11}-1}\big(1+q^te_{11}y\big)
\prod_{t=d_1-m_{10}}^{i-1}\bigg(1+q^t\frac{h_2}{e_{10}}y\bigg)U_i(y)
\\
&\underset{s_0}{\mapsto}
\sum_{i=0}^{d_1} x^i \prod_{t=0}^{i-1}\dfrac{1+q^t\frac{h_2}{e_{10}} y}{1+q^t e_{11}y}
\prod_{t=i}^{m_{11}-1}\bigg(1+q^t\frac{h_2}{e_{10}}y\bigg)
\prod_{t=d_1-m_{10}}^{i-1}\big(1+q^te_{11}y\big) \tilde{U}_i(y),
\end{align*}
where $\tilde{U}_i(y)$ is a polynomial in $y$ of degree $i$.
For $\tau^\lambda$, considering only the relevant factors, we~have
\begin{align*}
\dfrac{\sigma_1^{d_1}\sigma_2^{d_2}}{\tau_{10}^{m_{10}}\tau_{11}^{m_{11}}} &=
\tau_{11}^{-m_{11}}\tau_{10}^{d_1-m_{10}}\bigg(\dfrac{\sigma_1}{\tau_{10}}\bigg)^{d_1}\sigma_2^{d_2}
\\
& \underset{s_0}{\mapsto}
\prod_{t=0}^{m_{11}-1}\frac{1}{1+q^t\frac{h_2}{e_{10}}y}
\prod_{t=0}^{d_1-m_{10}-1}(1+q^te_{11} y)
\dfrac{\sigma_1^{d_1}\sigma_2^{d_1+d_2-m_{10}-m_{11}}}{\tau_{10}^{d_1-m_{11}}\tau_{11}^{d_1-m_{10}}}.
\end{align*}
Collecting the factors $\big(1+q^t e_{11}y\big)$ and $\big(1+q^t\frac{h_2}{e_{10}}y\big)$, we have $s_0(F \tau^{\lambda})=\tilde{F}\tau^{s_0 \lambda}$, where
\begin{gather*}
\tilde{F}=\sum_{i=0}^{d_1}x^i\prod_{t=i}^{d_1-m_{10}-1}\big(1+q^te_{11}y\big)
\prod_{t=m_{11}}^{i-1}\bigg(1+q^t\frac{h_2}{e_{10}}y\bigg)\tilde{U}_i(y).
\end{gather*}
Note that here we have applied the formula $\prod_{t=u}^{v-1}(*)\big[\prod_{t=w}^{u-1}(*)\big/\prod_{t=w}^{v-1}(*)\big]=\prod_{t=v}^{u-1}(*)$, which holds for $w\le\min(u,v)$.
Hence, $\tilde{F}$ is a polynomial of bidegree $(d_1, \tilde{d}_2=d_1+d_2-m_{10}-m_{11})$ satisfying the condition $(x)_{\tilde{\lambda}}$ for $\tilde{\lambda}=s_0(\lambda)$.
Moreover, $\tilde{F}$ satisfies the condition $(y)_{\tilde{\lambda}}$ also.
To~confirm this, we note that the condition $(y)_\lambda$ is equivalent to the condition on the top and bottom coefficients of $F=\sum_{i=0}^{d_2}A_i(x)y^i$:
\begin{gather*}
A_0 =\mathop{\rm const} \prod_{k=1}^6\prod_{t=-m_k}^{-1}\bigg(1+q^t\frac{1}{e_k}x\bigg), \qquad
A_{d_2}=\mathop{\rm const} \prod_{k=7}^9\prod_{t=0}^{m_k-1}\bigg(1+q^t\frac{e_k}{h_1}x\bigg),
\end{gather*}
together with the non-logarithmic properties.
For the coefficients $\tilde{A}_0$, $\tilde{A}_{{\tilde d}_2}$ of $\tilde{F}$, we have obviously $\tilde{A}_0=s_0(A_0)=\mathop{\rm const}A_0$, and we also have
\begin{gather*}
\tilde{A}_{{\tilde d}_2}
=s_0(A_{d_2})=\mathop{\rm const} \prod_{k=7}^9\prod_{t=0}^{m_k-1}\bigg(1+q^t\frac{e_k}{s_0(h_1)}\frac{h_2}{e_{10}e_{11}}x\bigg)
=\mathop{\rm const} A_{d_2},
\end{gather*}
since $s_0(x)=x \frac{1+\frac{h_2}{e_{10}}y}{1+e_{11}y} \to \frac{h_2}{e_{10}e_{11}}x$ ($y \to \infty$).
Hence, the leading coefficients $\tilde{A}_0$, $\tilde{A}_{{\tilde d}_2}$ have the required from $(y)_{\tilde{\lambda}}$.
Our remaining task is to show that the non-logarithmic property of $\tilde{F}$ is inherited from that of $F$.
Indeed, recall that the $s_0$-transformation is realized as the adjoint action $s_0(X)=G_0^{-1} r_0(X) G_0$ with $G_0=\big(y \frac{h_2}{e_{10}}\big)^+_\infty/(y e_{11})^+_\infty$.
Then, under the corresponding transformation of the solutions $\psi(y) \underset{s_0}{\mapsto} G_0(y)^{-1} r_0(\psi(y))$, the non-logarithmic property around $y=0$ is preserved from the regularity of the $q$-factorial $(z)^+_\infty$.
Besides, with the rewriting
\begin{gather*}
G_0=\dfrac{\big(y \frac{h_2}{e_{10}}\big)^+_\infty}{(y e_{11})^+_\infty}
=C(y) y^{\nu}\dfrac{(q/(y e_{11}))^+_\infty}{\big(q/\big(y\frac{h_2}{e_{10}}\big)\big)^+_\infty}, \qquad
q^\nu=\dfrac{e_{10}e_{11}}{h_2},
\end{gather*}
we can use $\tilde{G}_0=y^{\nu}{(q/(y e_{11}))^+_\infty}/{\big(q/\big(y\frac{h_2}{e_{10}}\big)\big)^+_\infty}$ instead of $G_0$, since the factor $C(y)$ is a pseudo constant: $C(qy)=C(y)$ and irrelevant for the adjoint action.
Hence the non-logarithmic property around $y=\infty$ follows similarly.

\medskip\noindent
{\it Case $s_3$}.
Let $F=F_\lambda(x,y)$ be a polynomial satisfying the condition $(y)_\lambda$.
The action of $s_3$ on two parts of $F\tau^\lambda$ is given by
\begin{gather*}
\prod_{t=i-m_1}^{-1}\bigg(1+q^t\dfrac{1}{e_1}x\bigg)
\prod_{t=0}^{i-d_2+m_7-1}\bigg(1+q^t\dfrac{e_7}{h_1}x\bigg)
V_i(x)\, y^i
\\ \qquad
\underset{s_3}{\mapsto}
\prod_{t=i-m_1}^{-1}\bigg(1+q^t\dfrac{e_7}{h_1}x\bigg)
\prod_{t=0}^{i-d_2+m_7-1}\bigg(1+q^t\dfrac{1}{e_1}x\bigg)
\tilde{V}_i(x) \prod_{t=1}^{i-1}\dfrac{1+q^t \frac{e_7}{h_1}x}{1+q^t\frac{1}{e_1}x} y^i,
\end{gather*}
and
\begin{align*}
\dfrac{\sigma_1^{d_1}\sigma_2^{d_2}}{\tau_{1}^{m_{1}}\tau_{7}^{m_{7}}} =
\tau_{1}^{-m_{1}}\tau_{7}^{d_2-m_{7}}\bigg(\dfrac{\sigma_2}{\tau_{7}}\bigg)^{d_2}\sigma_1^{d_1}
\underset{s_3}{\mapsto}y^{-i}
\prod_{t=i-m_1}^{i-1}\frac{1}{1+q^t \frac{e_7}{h_1}x}
\prod_{t=i-d_2+m_7}^{i-1}\bigg(1+q^t \frac{1}{e_1} x\bigg)y^i,
\end{align*}
where we choose $\tilde\lambda=s^*_3\lambda$ with the tilde applying to each component of $\lambda=(d_i,m_i)$.
By~combining them, the factors $\big(1+q^t\frac{1}{e_1}x\big)$ and $\big(1+q^t\frac{e_7}{h_1}x\big)$ in the coefficient of $y^i$ in $s_3\big(F \tau^{\lambda}\big)$ are
\begin{gather*}
\prod_{t=i-d_2+m_7}^{-1}\bigg(1+q^t\dfrac{1}{e_1}x\bigg)
\prod_{t=0}^{i-m_1-1}\bigg(1+q^t\dfrac{e_7}{h_1}x\bigg),
\end{gather*}
where we have applied the formula $\prod_{t=u}^{v-1}(*)\big[\prod_{t=v}^{w-1}(*)\big/\prod_{t=u}^{w-1}(*)\big]=\prod_{t=v}^{u-1}(*)$, which holds for $\max(u,v)\le w$.
Then, we have $s_3\big(F\tau^{\lambda}\big)=\tilde{F}\tau^{\tilde{\lambda}}$ with
\begin{gather*}
\tilde{F}=
\sum_{i=0}^{d_2}
\prod_{k=1}^6\prod_{t=i-\tilde{m}_k}^{-1}\bigg(1+q^t\frac{1}{e_k}x\bigg)
\prod_{k=7}^9\prod_{t=0}^{\tilde{m}_k-\tilde{d}_2+i-1}\bigg(1+q^t\frac{e_k}{h_1}x\bigg)
\tilde{V}_i(x)\, y^i,
\end{gather*}
and hence, $\tilde{F}$ satisfies the condition $(y)_{\tilde{\lambda}}$ as desired.
The condition $(x)_{\tilde{\lambda}}$ can be confirmed using the adjoint action realization of $s_3$.
\end{proof}

\section[Quantum E8 curve as the Weyl group invariant]
{Quantum $\boldsymbol{E_8}$ curve as the Weyl group invariant}\label{section5}

Consider a degree/multiplicity data
\begin{gather}\label{eq:63-123}
\lambda=(d_i,m_i)=((6,3),(1,1,1,1,1,1,2,2,2,3,3)),
\end{gather}
which is special since it is invariant $w(\lambda)=\lambda$ under $w \in W\big(E_8^{(1)}\big)$, i.e., $\{\lambda\}$ is a Weyl orbit with only a single element, hence $\lambda$ is not in the Weyl orbit $\Lambda$ of Proposition \ref{e8weylorbit}.
We look for the corresponding quantum polynomial $P=P(x,y)$ defined by the conditions $(x)_\lambda$, $(y)_\lambda$:
\begin{align}
P={}&y^{[0]}\prod_{t=0}^{2}\big(1+q^te_{11}y\big)+x y^{[1]}\prod_{t=1}^{2}\big(1+q^te_{11}y\big)
+x^2 y^{[2]} \big(1+q^2e_{11} y\big)+x^3 y^{[3]}\nonumber
\\
&+x^4 y^{[2]} \bigg(1+q^3\frac{h_2}{e_{10}}y\bigg)
+x^5 y^{[1]} \prod_{t=3}^{4}\bigg(1+q^t\frac{h_2}{e_{10}}y\bigg)
+x^6 y^{[0]} \prod_{t=3}^{5}\bigg(1+q^t\frac{h_2}{e_{10}}y\bigg)\nonumber
\\
={}&x^{[0]} \prod_{k=1}^6\! \bigg(1\!+\frac{1}{q e_k} x\bigg)
\!+x^{[6]} y
\!+x^{[3]} \prod_{k=7}^9\! \bigg(1\!+\frac{e_k}{h_1}x\bigg) y^2
\!+x^{[0]} \prod_{k=7}^9\! \prod_{t=0}^{1}\bigg(1\!+q^t\frac{e_k}{h_1}x\bigg) y^3,
\label{eq:Hcond}
\end{align}
where $x^{[i]}$ \big(or $y^{[i]}$\big) represent some polynomials in $x$ (or $y$) of degree $i$.

\begin{Proposition}
When the parameters satisfy the constraint
\begin{gather}\label{eq:ehcons}
h_1^6h_2^3=e_1e_2e_3e_4e_5e_6e_7^2e_8^2e_9^2e_{10}^3e_{11}^3,
\end{gather}
then the general solution $P(x,y)$ of the condition~\eqref{eq:Hcond} takes the form
\begin{gather}\label{eq:PP0_form}
P(x,y)=c_0 P_0(x,y)+c_1 x^3y.
\end{gather}
Moreover, when the polynomial $P(x,y)$ is normalized as $P(0,0)=1$ and $c_1 \in \C$, then $P(x,y)$ is invariant under the action of $W\big(E_8^{(1)}\big)$ up to some multiplicative factors, namely
\begin{gather}\label{eq:Hinv}
s_i(P)=P\qquad (i\neq 0,3), \qquad
s_0(P)=P\prod_{i=0}^{2}\dfrac{1+q^i\frac{h_2}{e_{10}} y}{1+q^i e_{11}y}, \qquad
s_3(P)=P\dfrac{1+\frac{e_7}{q h_1}x}{1+\frac{1}{q e_1}x}.
\end{gather}
\end{Proposition}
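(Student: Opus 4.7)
The plan is to first establish the decomposition (\ref{eq:PP0_form}) via a dimension count, then deduce the invariance (\ref{eq:Hinv}) from Theorem~\ref{thm:sF} combined with the adjoint realization of Theorem~\ref{thm:adjoint}. I begin by observing that the monomial $x^3y$ always satisfies (\ref{eq:Hcond}): its $x^3$-coefficient $y$ fits the $y^{[3]}$ slot, and its $y$-coefficient $x^3$ fits $x^{[6]}$. To upper-bound the solution space, I would compare the parameter counts of the two parametrizations in (\ref{eq:Hcond}): $1+2+3+4+3+2+1=16$ from the $x$-expansion and $1+7+4+1=13$ from the $y$-expansion. The generic rank of the compatibility between these two parametrizations, viewed inside the $28$-dimensional space of bidegree-$(6,3)$ polynomials, leaves only $x^3y$; the constraint (\ref{eq:ehcons}) is the unique degeneration under which an extra solution $P_0$ appears, so the space $V_\lambda$ of solutions to (\ref{eq:Hcond}) is exactly two-dimensional with basis $\{P_0,x^3y\}$.

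For $s_i$ with $i\ne 0,3$ the action is trivial on $x,y$ and merely permutes $(e_1,\dots,e_6)$ or $(e_7,e_8,e_9)$, under which (\ref{eq:Hcond}) is manifestly symmetric; hence the canonically normalized $P_0$ is symmetric and $s_i(P)=P$. For $s_0$ and $s_3$ I would invoke Theorem~\ref{thm:sF}: since $\lambda$ is $W\big(E_8^{(1)}\big)$-invariant, one has $s_i(F\tau^\lambda)=F'\tau^\lambda$ with $F'\in V_\lambda$. Using the commutation rule $\tau^\lambda e^\mu=q^{\lambda\cdot\mu}e^\mu\tau^\lambda$ together with the adjoint realization of Theorem~\ref{thm:adjoint}, a short calculation gives
\begin{gather*}
s_0\big(\tau^\lambda\big)\tau^{-\lambda}=\prod_{k=0}^{2}\dfrac{1+q^ke_{11}y}{1+q^k\frac{h_2}{e_{10}}y}=R_0^{-1},
\end{gather*}
with an analogous factor $R_3^{-1}$ for $s_3$. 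A direct application of $yx=qxy$ also yields $s_0(x^3y)=x^3y\cdot R_0$ and $s_3(x^3y)=x^3y\cdot R_3$, so the $x^3y$-component of $P$ already obeys (\ref{eq:Hinv}). Because (\ref{eq:Hinv}) is linear in the free constant $c_1\in\C$, it remains to prove $s_0(P_0)=P_0R_0$ and $s_3(P_0)=P_0R_3$; writing $s_0(P_0\tau^\lambda)/\tau^\lambda=aP_0+bx^3y\in V_\lambda$, the value $a=1$ is fixed by evaluating at $(x,y)=(0,0)$ using $P_0(0,0)=1$, while $b=0$ follows by comparing a coefficient that is rigidly determined in $P_0$ by the factorization conditions (\ref{eq:Hcond}) but is not of the form $x^3y^k$, for instance the $y^3$-component $\prod_{k=7}^9\prod_{t=0}^1(1+q^te_kx/h_1)$.

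The hardest step will be the dimension count in the first paragraph and the final elimination $b=0$. Both should follow cleanly from the non-logarithmic framework of Section~\ref{section3}: (\ref{eq:ehcons}) is precisely the resonance condition between the exponents at $y=0$ and $y=\infty$ that permits an additional non-logarithmic solution, and the factorization dictated by non-logarithmicity forces the transformed polynomial to lie exactly on the $P_0$ axis with no $x^3y$ admixture.
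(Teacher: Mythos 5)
Your overall strategy for the invariance part is essentially the paper's: apply Theorem~\ref{thm:sF} to the Weyl-invariant $\lambda$, compute the conjugation factor coming from $\tau^\lambda$, check the monomial $x^3y$ explicitly, and fix the normalization at the origin. However, two of your steps have genuine gaps. First, your argument that $b=0$ is logically incapable of working: the two basis elements $P_0$ and $P_0+b\,x^3y$ of the solution space differ \emph{only} in the coefficient of $x^3y$, so comparing any coefficient ``not of the form $x^3y^k$'' --- in particular the $y^3$-component $\prod_{k=7}^9\prod_{t=0}^1\bigl(1+q^te_kx/h_1\bigr)$ --- is blind to $b$ and can only re-confirm $a=1$. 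To kill the admixture you must look precisely at the $x^3y$ coefficient. A correct short argument is available: writing $P_0=\sum_i x^iA_i(y)$ and using $s_0(x)^i=x^i\prod_{t=0}^{i-1}f(q^ty)$ with $f(y)=\frac{1+h_2y/e_{10}}{1+e_{11}y}$, the $x^3$-row of $s_0(P_0)\,R_0^{-1}$ is exactly $s_0(A_3)(y)$ because the factor $R_0=\prod_{t=0}^2 f(q^ty)$ cancels there; since the $y$-linear part of $A_3$ vanishes by the definition of $P_0$, so does that of $s_0(A_3)$, hence $b=0$ (and analogously for $s_3$ via the $y^1$-row).

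Second, the first half of your proposal --- that \eqref{eq:ehcons} is exactly the condition under which the solution space of \eqref{eq:Hcond} jumps from the generic dimension $1$ (spanned by $x^3y$) to dimension $2$ --- is asserted rather than proved, and the heuristic you offer (``resonance between the exponents at $y=0$ and $y=\infty$'') does not match the actual mechanism: resonances between boundary exponents would give relations like $h_2=q^k e_{10}e_{11}$, not the global constraint \eqref{eq:ehcons} involving all parameters. The paper's proof supplies the missing content by an auxiliary data $\mu=((6,3),(0,1,1,1,1,1,2,2,2,3,3))$, for which \eqref{eq:dim} gives a two-dimensional space with basis $\{P_0,x^3y\}$; chasing the corner coefficients of $F_\mu$ around the Newton polygon (the relation between roots and coefficients) forces the sixth root of $F_\mu(x,0)$ to sit at $a$ with $h_1^6h_2^3=a\,e_2\cdots e_6e_7^2e_8^2e_9^2e_{10}^3e_{11}^3$, and the single extra vanishing condition distinguishing $\lambda$ from $\mu$ is then automatic precisely when $a=e_1$, i.e.\ when \eqref{eq:ehcons} holds. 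Without some argument of this kind you have neither derived the constraint nor shown that the solution space is exactly two-dimensional, which you yourself flag as the ``hardest step''; as written, both halves of that step remain unestablished.
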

\begin{proof} Consider an auxiliary case
\begin{gather*}
\mu=((d_1,d_2), (m_1, \dots, m_{11}))=((6,3),(0,1,1,1,1,1,2,2,2,3,3)).
\end{gather*}
From equation~(\ref{eq:dim}), the general solution $F_\mu(x,y)$ for the condition $(x)_{\mu}$, $(y)_{\mu}$ has two linearly independent solutions.
We can and we will choose a basis $\big\{P_0(x,y), x^3 y\big\}$, where $P_0(x,y)$ is fixed by the conditions: $(i)$ the coefficient of $x^3 y$ in $P_0(x,y)$ is zero, and $(ii)$ $P_0(0,0)=1$.
By~defini\-tion $F_\mu(x,0)$ has 6 roots at $x=a, e_2, \dots, e_6$, where $a$ is determined by
$h_1^6h_2^3=ae_2e_3e_4e_5e_6e_7^2e_8^2e_9^2e_{10}^3e_{11}^3$ due to the relation between the roots and the coefficients.
Now we turn to the case $\lambda$ in~equa\-tion~(\ref{eq:63-123}).
Compared with the case $\mu$, the case $\lambda$ demands one more condition $P(e_1,0)=0$.
However this extra condition is automatically satis\-fied if $a=e_1$, i.e., the constraint (\ref{eq:ehcons}) is satis\-fied.
Hence, under the constraint (\ref{eq:ehcons}), the general solution $F_\lambda(x,y)$ is given by $F_\mu(x,y)|_{a=e_1}$ which has the desired form (\ref{eq:PP0_form}).
Equation~(\ref{eq:Hinv}) follows from Theorem~\ref{thm:sF} and explicit computation on the monomial $x^3y$.
\end{proof}

\begin{Corollary}
The quotient $H(x,y)=P(x,y)x^{-3}y^{-1}$ is invariant under $W\big(E_8^{(1)}\big)$.
\end{Corollary}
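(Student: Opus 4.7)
The plan is a direct verification, using the three cases of Proposition and careful tracking of the non-commutative ordering via $yx=qxy$. For the simple reflections $s_i$ with $i\neq 0,3$, the action fixes both $x$ and $y$ and leaves $P$ invariant, so $s_i(H)=s_i(P)\cdot x^{-3}y^{-1}=H$ immediately.

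For $s_0$, I would first compute $s_0(x^3)$. Writing $s_0(x)=x\,A(y)$ with $A(y)=(1+yh_2/e_{10})(1+ye_{11})^{-1}$, the identity $f(y)x=xf(qy)$ (which follows from $yx=qxy$) lets me push each factor of $A$ past successive $x$'s, giving
\begin{gather*}
s_0(x)^3 = x^3\prod_{k=0}^{2}A(q^k y) = x^3\prod_{k=0}^{2}\frac{1+q^k\frac{h_2}{e_{10}}y}{1+q^k e_{11}y}.
\end{gather*}
Since $s_0(y)=y$, one then has $s_0(H)=s_0(P)\cdot s_0(x)^{-3}\cdot y^{-1}$. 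By the $s_0$-transformation formula in equation~(\ref{eq:Hinv}), $s_0(P)/P$ is exactly the same product in $y$; moreover everything involved lives in the commutative subfield $\C(h_i,e_i,y)$, so the two products cancel and $s_0(H)=Px^{-3}y^{-1}=H$.

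For $s_3$, which fixes $x$ and sends $y$ to $B(x)\,y$ with $B(x)=(1+xe_7/h_1)(1+x/e_1)^{-1}$, I would invert to get $s_3(y)^{-1}=y^{-1}B(x)^{-1}=B(q^{-1}x)^{-1}y^{-1}$, using $y^{-1}f(x)=f(q^{-1}x)y^{-1}$. Then
\begin{gather*}
s_3(H)=s_3(P)\,x^{-3}\,s_3(y)^{-1}= P\cdot\frac{1+\tfrac{e_7}{qh_1}x}{1+\tfrac{1}{qe_1}x}\cdot B(q^{-1}x)^{-1}\cdot x^{-3}y^{-1},
\end{gather*}
and the two $x$-dependent factors coincide (since $B(q^{-1}x)=(1+\tfrac{e_7}{qh_1}x)(1+\tfrac{1}{qe_1}x)^{-1}$), so they cancel and $s_3(H)=H$.

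The only subtle point is the non-commutative bookkeeping: one must apply the commutation $yx=qxy$ in the correct direction at each step, track the $q^k$ shift picked up when sliding a function of $y$ past a power of $x$ (and vice versa), and check that the factor $s_i(P)/P$ in equation~(\ref{eq:Hinv}) matches exactly the reordering factor from $s_i(x^{-3})$ or $s_i(y^{-1})$. All other steps are mechanical, so this bookkeeping is the whole content of the argument.
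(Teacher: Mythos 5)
Your argument is correct and is exactly the verification the paper leaves implicit: the Corollary follows from equation~(\ref{eq:Hinv}) once one checks, via $f(y)x=xf(qy)$ and $y^{-1}f(x)=f\big(q^{-1}x\big)y^{-1}$, that $s_0(x)^{-3}$ and $s_3(y)^{-1}$ produce precisely the inverse factors, which commute past $x^{-3}$ (resp.\ live in $\C(h_i,e_i,y)$) and cancel. No gaps; this matches the paper's intended reasoning, including the same ``explicit computation on the monomial $x^3y$'' used there.
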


The quantum curve $\widehat{H}(Q,P; \{f_i,g_i,h_i\})$ for $E_8$ in~\cite{Moriyama} written in the ``rectangular'' realization coincides with $H(x,y; \{h_i, e_i\})=x^{-3}P_0\big(x,q^{-3}y\big)y^{-1}$ up to a normalization, by the following change of the variables and parameters,\footnote{Note that the symbols $h_i$ have different meanings in $\widehat{H}(Q,P; \{f_i,g_i,h_i\})$~\cite{Moriyama} and $H(x,y; \{h_i, e_i\})$ here.}
\begin{gather*}
(Q,P)\to q^{\frac{1}{2}}\big(x^{-1},y\big), \qquad
(f_1, f_2, f_3)\to h_1^{-1}(e_7, e_8, e_9), \qquad
(g_1,g_2)\to\bigg(\frac{1}{e_{11}},\frac{e_{10}}{h_2}\bigg),
\\
(h_1, \dots, h_6)\to e_{11}(e_1, \dots, e_6).
\end{gather*}
The corresponding tropical curve is a pencil of elliptic curves given in Figure~\ref{Fig:rect}.

\begin{figure}[tbp]
\begin{center}
\includegraphics[width=80mm]{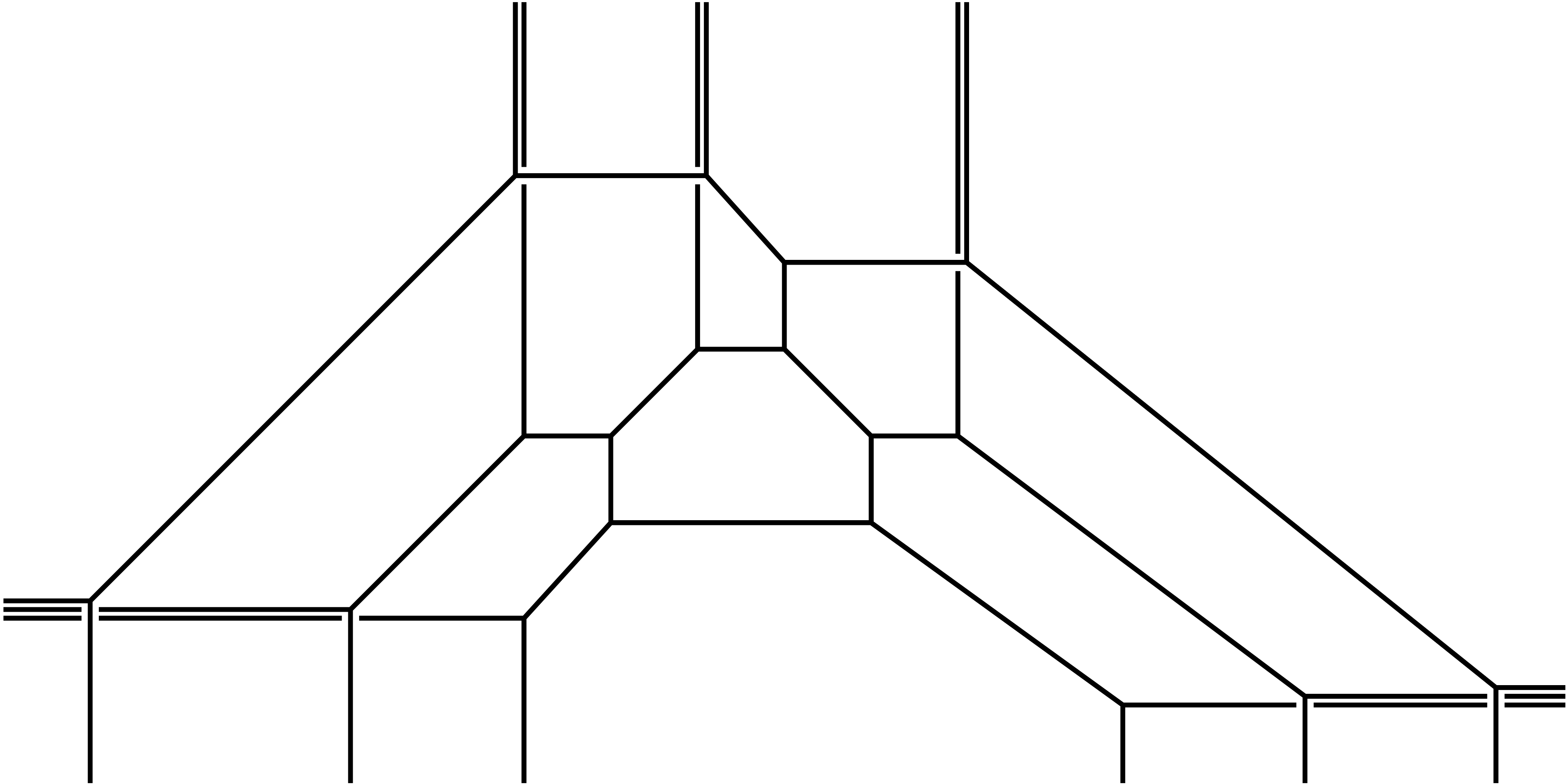}
\caption[Fig.2]{The web diagram corresponding to the curve $H(x,y)=E$ which has $6 \rm{(single)}+3 \rm{(double)}+2 \rm{(triple)}$ asymptotic lines.
It has one closed cycle whose size depends on the parameter $E$, hence \mbox{$\rm (genus)=1$}.
See, e.g.,~\cite{BBT,KTY,KY}.}
\label{Fig:rect}
\end{center}
\end{figure}

An explicit form of the polynomial $P_0(x,y)$ is given by
\begin{gather*}
P_0(x,y)=\sum_{i=0}^3 C_i(x)y^i,
\\
C_3(x)=q^3 e_{11}^3 \prod_{i=7}^9 \bigg(1+\frac{e_i}{h_1}x\bigg)\bigg(1+q\frac{e_i}{h_1}x\bigg),
\\
C_2(x)=q e_{11}^2\prod_{i=7}^9 \bigg(1+\frac{e_i}{h_1}x\bigg)
\big\{[3]_q+q x A_{-1}+q \kappa A_{1} x^2+[3]_q \kappa x^3\big\},
\\
C_1(x)=e_{11}\bigg\{[3]_q+[2]_q A_{-1} x+(\kappa A_1 +A_{-2})x^2
+\frac{\kappa}{q}(\kappa A_2+A_{-1})x^4
\\ \hphantom{C_1(x)=e_{11}\bigg\{}
{}+\frac{[2]_q \kappa^2 A_1}{q^2}x^5+\frac{[3]_q \kappa^2}{q^3} x^6\bigg\},
\\
 C_0(x)=\prod_{i=1}^6\bigg(1+\frac{1}{qe_i}x\bigg),
\end{gather*}
where
\begin{gather*}
A_{\pm 1}=\sum_{i=1}^9 a_i^{\pm 1}, \qquad
A_{\pm 2}=\sum_{1\leq i<j\leq 9} (a_i a_j)^{\pm 1},
\\
a_i=e_i\qquad (1\leq i\leq 6), \qquad
a_i=\frac{h_1}{e_i}\qquad (7\leq i\leq 9),
\\
[k]_q=\dfrac{1-q^k}{1-q},\qquad \kappa=\dfrac{e_7 e_8 e_9 e_{10} e_{11}}{h_1^2 h_2}.
\end{gather*}

\begin{Remark}
In the context of discrete Painlev\'e equations, the constraint (\ref{eq:ehcons}) gives the auto\-nomous case where the system admits a conserved curve $H(x,y)=E$.
\end{Remark}

\section{Bilinear equations}\label{section6}
The Weyl group representation in Theorem \ref{thm:quantum-rep} can be reformulated as follows.
\begin{Proposition}\label{prop:tau_Weyl}
Introduce variables $\tau_{1,10}$, $\tau_{1,11}$, $\tau_{2,1}$, $\tau_{2,7}$ instead of $x$, $y$, $\sigma_1$, $\sigma_2$ as
\begin{gather*}
\tau_{1,10}=\frac{\sigma_1}{\tau _{10}},\qquad
\tau_{1,11}= x\frac{\sigma_1}{\tau _{11}},\qquad
\tau_{2,1}= y\frac{\sigma_2}{\tau _1},\qquad
\tau_{2,7}= \frac{\sigma_2}{\tau _7}.
\end{gather*}
Then we have the representation of the Weyl group $W\big(E_8^{(1)}\big)$ given by
\begin{gather*}
s_0=\bigg\{e_{10}\to \dfrac{h_2}{e_{11}},\,
e_{11}\to \dfrac{h_2}{e_{10}},\,
h_1\to \dfrac{h_1h_2}{e_{10} e_{11}},
\\ \hphantom{s_0=\bigg\{}
\tau_{10}\to (\tau_{2,7}\tau_7+e_{11}\tau_{2,1}\tau_1)\dfrac{1}{\tau_{11}},\,
\tau_{11}\to \dfrac{1}{\tau_{10}}\bigg(\tau_{2,7}\tau_7+\dfrac{h_2}{e_{10}}\tau_{2,1}\tau_1\bigg)\bigg\},
\\
s_1=\{e_8\leftrightarrow e_9,\, \tau_8\leftrightarrow \tau_9\},\qquad
s_2=\bigg\{e_7\leftrightarrow e_8,\ \tau_7\leftrightarrow \tau_8,\, \tau_{2,7}\to \dfrac{\tau_7 \tau_{2,7}}{\tau_8}\bigg\},
\\
s_3=\bigg\{e_1\to \dfrac{h_1}{e_7},\,
e_7\to \dfrac{h_1}{e_1},\,
h_2\to \dfrac{h_1 h_2}{e_1 e_7},
\\ \hphantom{s_3=\bigg\{}
\tau_1\to \bigg(\tau_{1,10}\tau_{10}+\dfrac{e_7}{h_1}\tau_{1,11}\tau_{11}\bigg)\dfrac{1}{\tau_7},\
\tau_7\to \dfrac{1}{\tau_1}\bigg(\tau_{1,10}\tau_{10}+\dfrac{1}{e_1}\tau_{1,11}\tau_{11}\bigg)\bigg\},
\\
s_4=\bigg\{e_1\leftrightarrow e_2,\, \tau_1\leftrightarrow \tau_2,\, \tau_{2,1}\to \dfrac{\tau_1 \tau_{2,1}}{\tau_2}\bigg\},\quad
s_5=\{e_2\leftrightarrow e_3,\, \tau_2\leftrightarrow \tau_3\},
\\
s_6=\{e_3\leftrightarrow e_4,\, \tau_3\leftrightarrow \tau_4\},\qquad
s_7=\{e_4\leftrightarrow e_5,\, \tau_4\leftrightarrow \tau_5\},\qquad
s_8=\{e_5\leftrightarrow e_6,\, \tau_5\leftrightarrow \tau_6\}.
\end{gather*}
\end{Proposition}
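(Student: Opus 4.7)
The plan is to verify the proposition by direct computation, exploiting the identities
\[
\sigma_1=\tau_{1,10}\tau_{10},\quad x\sigma_1=\tau_{1,11}\tau_{11},\quad \sigma_2=\tau_{2,7}\tau_7,\quad y\sigma_2=\tau_{2,1}\tau_1,
\]
which are immediate from the definitions together with the fact that $\sigma_1,\tau_{10},\tau_{11}$ commute with $x$, and $\sigma_2,\tau_1,\tau_7$ commute with $y$. These identities convert each factor $(1+\alpha y)\sigma_2$ or $(1+\beta x)\sigma_1$ appearing in Theorem~\ref{thm:quantum-rep} into a bilinear expression in tau variables, which is the essence of the reformulation.

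For the non-trivial reflections $s_0$ and $s_3$ I would simply expand. Starting from the formula of Theorem~\ref{thm:quantum-rep},
\[
s_0(\tau_{10})=(1+ye_{11})\frac{\sigma_2}{\tau_{11}}=\frac{\sigma_2+e_{11}y\sigma_2}{\tau_{11}}=\frac{\tau_{2,7}\tau_7+e_{11}\tau_{2,1}\tau_1}{\tau_{11}},
\]
which is exactly the stated bilinear form; the same manipulation handles $s_0(\tau_{11})$. The $s_3$-case is parallel: using $\sigma_1=\tau_{1,10}\tau_{10}$ and $x\sigma_1=\tau_{1,11}\tau_{11}$,
\[
s_3(\tau_1)=(1+xe_7/h_1)\frac{\sigma_1}{\tau_7}=\frac{\tau_{1,10}\tau_{10}+(e_7/h_1)\tau_{1,11}\tau_{11}}{\tau_7},
\]
and analogously for $s_3(\tau_7)$.

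For $s_2$ and $s_4$, although the original action merely swaps $\tau_7\leftrightarrow\tau_8$ (resp. $\tau_1\leftrightarrow\tau_2$), the induced action on the composite variables $\tau_{2,7}=\sigma_2/\tau_7$ and $\tau_{2,1}=y\sigma_2/\tau_1$ is non-trivial: since $s_2(\sigma_2)=\sigma_2$, one computes $s_2(\tau_{2,7})=\sigma_2/\tau_8=\tau_7(\sigma_2/\tau_7)/\tau_8=\tau_7\tau_{2,7}/\tau_8$ using that $\tau_7$ commutes with $\tau_{2,7}$. The analogous identity gives $s_4(\tau_{2,1})=\tau_1\tau_{2,1}/\tau_2$. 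The remaining reflections $s_1,s_5,s_6,s_7,s_8$ act purely by permutation of indices that are disjoint from those defining the new variables, so the listed formulas are tautological.

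The main obstacle is careful bookkeeping in the non-commutative setting: while most of the relevant factors commute with $\sigma_i$ (making the bilinearization smooth), the $q$-relations $\sigma_1 h_2=qh_2\sigma_1$, $\sigma_2 h_1=qh_1\sigma_2$, $\tau_i e_i=q^{-1}e_i\tau_i$ must be consistently tracked, in particular when verifying the implicit statements (that, e.g., $\tau_{1,10},\tau_{1,11}$ are invariant under $s_0$, and $\tau_{2,1},\tau_{2,7}$ under $s_3$) are compatible with the original transformations of $x,y,\sigma_1,\sigma_2$. Once the above rewriting is established on the listed generators, the Weyl group relations are inherited from Theorem~\ref{thm:quantum-rep} because the reformulation amounts to a change of variables in the skew field $\mathcal{K}$ together with suitable ordering conventions, without altering the underlying automorphisms up to the bilinear identities used.
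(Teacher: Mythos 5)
Your proposal is correct and follows essentially the same route as the paper's proof: rewrite the nontrivial $s_0$, $s_3$ actions of Theorem \ref{thm:quantum-rep} bilinearly via $\sigma_2=\tau_{2,7}\tau_7$, $y\sigma_2=\tau_{2,1}\tau_1$, $\sigma_1=\tau_{1,10}\tau_{10}$, $x\sigma_1=\tau_{1,11}\tau_{11}$ (legitimate because the relevant pairs commute in $\mathcal{K}$), keeping $\tau_{11}^{-1}$, $\tau_{10}^{-1}$, $\tau_7^{-1}$, $\tau_1^{-1}$ in their stated positions. Your explicit treatment of $s_2(\tau_{2,7})$, $s_4(\tau_{2,1})$ and the flagged invariance checks are exactly what the paper dismisses as ``other actions are obvious,'' so there is no gap.
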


\begin{proof} The actions written in the new variables are computed as follows.
\begin{gather*}
\tau_{10}\stackrel{s_0}{\to}(1+ye_{11})\frac{\sigma_2}{\tau_{11}}
=\bigg(\frac{\sigma_2}{\tau_7}\tau_7+e_{11}\frac{\sigma_2y}{\tau_1}\tau_1\bigg)\frac{1}{\tau_{11}}
=(\tau_{2,7}\tau_7+e_{11}\tau_{2,1}\tau_1)\frac{1}{\tau_{11}},
\\[1ex]
\tau_{11}\stackrel{s_0}{\to}\frac{\sigma_2}{\tau_{10}}\bigg(1+y\frac{h_2}{e_{10}}\bigg)
=\frac{1}{\tau_{10}}\bigg(\frac{\sigma_2}{\tau_7}\tau_7+\frac{h_2}{e_{10}}\frac{\sigma_2y}{\tau_1}\tau_1\bigg)
=\frac{1}{\tau_{10}}\bigg(\tau_{2,7}\tau_7+\frac{h_2}{e_{10}}\tau_{2,1}\tau_1\bigg),
\\[1ex]
\tau_1\stackrel{s_3}{\to}\bigg(1+x\frac{e_7}{h_1}\bigg)\frac{\sigma_1}{\tau_7}
=\bigg(\frac{\sigma_1}{\tau_{10}}\tau_{10}+\frac{e_7}{h_1}\frac{\sigma_1x}{\tau_{11}}\tau_{11}\bigg) \frac{1}{\tau_7}
=\bigg(\tau_{1,10}\tau_{10}+\frac{e_7}{h_1}\tau_{1,11}\tau_{11}\bigg)\frac{1}{\tau_7},
\\[1ex]
\tau_7\stackrel{s_3}{\to}\frac{\sigma_1}{\tau_1}\bigg(1+\frac{x}{e_1}\bigg)
=\frac{1}{\tau_1}\bigg(\frac{\sigma_1}{\tau_{10}}\tau_{10} +\frac{1}{e_1}\frac{\sigma_1x}{\tau_{11}}\tau_{11}\bigg)
=\frac{1}{\tau_1}\bigg(\tau_{1,10}\tau_{10}+\frac{1}{e_1}\tau_{1,11}\tau_{11}\bigg).
\end{gather*}
Other actions are obvious.
\end{proof}

In order to describe the bilinear equations in the Weyl-group covariant way, we define the tau functions $\tau(\lambda)$ on a certain lattice $L$ as follows.
\begin{itemize}\itemsep=0pt
\item[$(i)$] For $\lambda \in L_0=\big\{e_1, \dots, e_{11}, \frac{h_2}{e_1},\frac{h_2}{e_7},\frac{h_1}{e_{10}},\frac{h_1}{e_{11}}\big\}$,
we put
$\tau(e_i)=\tau_i$ ($1\leq i\leq 11$),
$\tau\big(\frac{h_2}{e_i}\big)=\tau_{2,i}$ ($i=1,7$) and
$\tau\big(\frac{h_1}{e_j}\big)=\tau_{1,j}$ ($j=10,11$).

\item[$(ii)$] For Weyl-group elements $w \in W\big(E_8^{(1)}\big)$, we put $\tau(w(\lambda))=w(\tau(\lambda))$.
\end{itemize}
From $(i)$ and $(ii)$, one can uniquely determine the functions $\tau(\lambda)$ for any $\lambda=\frac{h_1^{d_1}h_2^{d_2}}{e_1^{m_1}\cdots e_{11}^{m_{11}}} \in L$, where $L$ is the Weyl-group orbit of $L_0$.
For $\lambda\in\Lambda$, this fact is a consequence of Theorem~\ref{thm:sF}, and it can be extended for $\lambda\in L$ similarly by using the normalization condition
\begin{gather*}
\lim_{x\to 0}x^{-1}F_\lambda(x,y=0)=1,\qquad
{\rm or}\qquad
\lim_{y\to 0}y^{-1}F_\lambda(x=0,y)=1,
\end{gather*}
for $\lambda=w\big(\frac{h_1}{e_{11}}\big)$ or $\lambda=w\big(\frac{h_2}{e_1}\big)$, respectively.
(For the other cases we still have $F_{\lambda}(0,0)=1$.)

\begin{Corollary}\label{lemma:B-sol}
The functions $\tau(\lambda)$ satisfy the following relations:
\begin{gather}
\tau(e_{10})\tau\bigg(\dfrac{h_2}{e_{10}}\bigg)=\dfrac{h_2}{e_{10}} \tau\bigg(\dfrac{h_2}{e_i}\bigg) \tau(e_i)+ \tau\bigg(\dfrac{h_2}{e_j}\bigg)\tau(e_j),\nonumber
\\[1ex]
\tau\bigg(\dfrac{h_2}{e_{11}}\bigg)\tau(e_{11})=e_{11} \tau\bigg(\dfrac{h_2}{e_i}\bigg) \tau(e_i)+ \tau\bigg(\dfrac{h_2}{e_j}\bigg)\tau(e_j),\nonumber
\\[1ex]
\tau(e_i)\tau\bigg(\dfrac{h_1}{e_i}\bigg) =\dfrac{1}{e_i}\tau
\bigg(\dfrac{h_1}{e_{11}}\bigg)\tau(e_{11})+\tau\bigg(\dfrac{h_1}{e_{10}}\bigg)\tau(e_{10}),\nonumber
\\[1ex]
\tau\bigg(\dfrac{h_1}{e_j}\bigg)\tau(e_j) =\dfrac{e_j}{h_1}\tau\bigg(\dfrac{h_1}{e_{11}}\bigg)\tau(e_{11}) +\tau\bigg(\dfrac{h_1}{e_{10}}\bigg)\tau(e_{10}),\nonumber
\\[1ex]
\tau\bigg(\dfrac{h_2}{e_1}\bigg) \tau(e_1)=\dots= \tau\bigg(\dfrac{h_2}{e_6}\bigg) \tau(e_6),\nonumber
\qquad
\tau\bigg(\dfrac{h_2}{e_7}\bigg) \tau(e_7)=\dots= \tau\bigg(\dfrac{h_2}{e_9}\bigg) \tau(e_9),
\label{eq:seeds}
\end{gather}
where $1\leq i\leq 6$ and $7\leq j\leq 9$.
Furthermore, the infinitely many bilinear relations obtained from the above equations via the Weyl group actions also hold.
\end{Corollary}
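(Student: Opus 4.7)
The strategy is to rearrange each of the $s_0$- and $s_3$-formulas in Proposition \ref{prop:tau_Weyl} so as to clear the single inverse tau factor they contain. By the defining rule $\tau(w\lambda)=w(\tau(\lambda))$, once the $\tau_i^{-1}$ is canceled against the $\tau_i$ we multiply in, both sides become bilinear expressions in the $\tau(\mu)$'s; these are precisely the seed versions of relations (1)--(4) with $i=1$, $j=7$, and the remaining equalities are obtained by transporting via the $S_6\times S_3$ subgroup generated by $s_1,s_2,s_4,\dots,s_8$.

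Concretely, for relation (2) I use
$$\tau\!\left(\dfrac{h_2}{e_{11}}\right)=s_0(\tau_{10})=\bigl(\tau_{2,7}\tau_7+e_{11}\tau_{2,1}\tau_1\bigr)\tau_{11}^{-1}$$
and multiply on the right by $\tau(e_{11})=\tau_{11}$, using $\tau_{2,7}\tau_7=\tau(h_2/e_7)\tau(e_7)$ and $\tau_{2,1}\tau_1=\tau(h_2/e_1)\tau(e_1)$. Relation (1) comes the same way from $s_0(\tau_{11})$ multiplied by $\tau_{10}$ on the left, and (3), (4) with $i=1$, $j=7$ from $s_3(\tau_7)$ and $s_3(\tau_1)$ respectively. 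For the chain equalities (5), (6) I use that all tau variables commute pairwise, so for example
$$s_4(\tau_{2,1}\tau_1)=\bigl(\tau_1\tau_{2,1}/\tau_2\bigr)\tau_2=\tau_{2,1}\tau_1,$$
while by covariance the left-hand side equals $\tau(h_2/e_2)\tau(e_2)$, giving one step of the chain. Iterating with $s_4,\dots,s_8$ (permuting $e_1,\dots,e_6$) yields (5), and the analogous argument with $s_1,s_2$ applied to $\tau_{2,7}\tau_7$ yields (6). The general cases of (1)--(4) for other admissible $i$, $j$ then follow either by substitution through (5), (6), or, equivalently, by applying the appropriate $S_6\times S_3$ permutation, which permutes the bilinear factors while fixing $h_1$, $h_2$, $e_{10}$, $e_{11}$, $\tau_{10}$, $\tau_{11}$.

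The last clause --- infinitely many further bilinear relations --- is then immediate: applying any $w\in W(E_8^{(1)})$ to a valid relation $A=B$ gives $w(A)=w(B)$, and by the representation property of Theorem \ref{thm:quantum-rep} together with $\tau(w\mu)=w(\tau(\mu))$, the result is again a bilinear in the $\tau(\mu)$'s. I do not foresee a serious obstacle; the only care required is operator ordering in the skew-field, but since the $\tau_i$ and $\sigma_j$ commute among themselves, each cancellation $\tau_i^{-1}\tau_i=1$ is unambiguous and the rearrangements are clean. Thus the proof is essentially a direct transcription of Proposition \ref{prop:tau_Weyl} into the $\tau(\lambda)$ notation.
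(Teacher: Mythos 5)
Your proposal is correct and follows the same route as the paper, whose proof is simply the one-line remark that the corollary is a direct reformulation of Proposition \ref{prop:tau_Weyl}; you have spelled out exactly that reformulation (clearing the single $\tau_i^{-1}$, identifying $\tau_{2,1}\tau_1$, $\tau_{2,7}\tau_7$, $\tau_{1,10}\tau_{10}$, $\tau_{1,11}\tau_{11}$ with the corresponding $\tau(\lambda)$-bilinears, and transporting by the permutation reflections), with the non-commutative orderings handled correctly. The only minor imprecision is the parenthetical suggestion that the general $i$, $j$ cases of relations (3)--(4) follow ``by substitution through (5), (6)''—they do not, since the coefficient $1/e_i$ or $e_j/h_1$ changes with the index—but your alternative argument via the $\mathfrak{S}_6\times\mathfrak{S}_3$ permutations covers those cases, so the proof stands.
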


\begin{proof} This is a simple reformulation of Proposition \ref{prop:tau_Weyl}. \end{proof}

\begin{Example}
The $s_0$ transform of the fourth equation in equation~(\ref{eq:seeds}) with $j=7$ is
\begin{gather}\label{eq:ex-bilinear}
\tau\bigg(\frac{h_1 h_2}{e_7 e_{10} e_{11}}\bigg)\tau(e_7)=
\frac{e_7e_{10}e_{11}}{h_1h_2}\tau\bigg(\frac{h_1}{e_{11}}\bigg)\tau\bigg(\frac{h_2}{e_{10}}\bigg)
+\tau\bigg(\frac{h_1}{e_{10}}\bigg)\tau\bigg(\frac{h_2}{e_{11}}\bigg).
\end{gather}
This can be confirmed by
\begin{gather*}
\tau\bigg(\dfrac{h_1 h_2}{e_7 e_{10} e_{11}}\bigg)
=\bigg\{1+e_{11}y+\dfrac{e_7 e_{10}e_{11}}{h_1 h_2}x\bigg(1+\dfrac{h_2}{e_{10}} y\bigg)\bigg\}
\dfrac{\sigma _1 \sigma _2 }{\tau _7 \tau _{10} \tau _{11}},\qquad
\tau(e_7)=\tau _7,
\\
\tau\bigg(\dfrac{h_1}{e_{11}}\bigg)=x\dfrac{\sigma_1}{\tau_{11}},\qquad
\tau\bigg(\dfrac{h_2}{e_{10}}\bigg)=\bigg(1+\dfrac{h_2}{qe_{10}}y\bigg)\dfrac{\sigma_2 }{\tau_{10}},
\\
\tau\bigg(\dfrac{h_1}{e_{10}}\bigg)=\dfrac{\sigma_1}{\tau_{10}},\qquad
\tau\bigg(\dfrac{h_2}{e_{11}}\bigg)=(1+e_{11}y)\dfrac{\sigma _2 }{\tau _{11}}.
\end{gather*}
Note that each term in equation~(\ref{eq:ex-bilinear}) is a member of two-parameter family $\tau\big(\frac{h_1 h_2}{e_{10} e_{11}}\big)$ (see equation~(\ref{eq:ex-two-para})) and should satisfy a relation among three of them.
\end{Example}

So far, we have derived the bilinear relations as the identities satisfied by the functions $\tau(\lambda)$ defined by the Weyl group actions.
Conversely, we can consider the relations as the infinite system of equations viewing $\tau(\lambda)$ $(\lambda \in L)$ as infinite unknown variables.
We call this overdetermined system of equations the {\it quantum bilinear equations} (denoted by $\mathcal B$).
Note that the bilinear system $\mathcal{B}$ is ``tropical" (or subtraction free)~\cite{Tsuda}.
\begin{Theorem}
For any initial data $\tau(\lambda)$ $(\neq 0)$ $(\lambda \in L_0)$, there exists a unique solution for the system $\mathcal{B}$.
And this solution gives the general solution.
\end{Theorem}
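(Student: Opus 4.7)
The plan is to establish existence and uniqueness separately, treating $\mathcal{B}$ as a recursive system that propagates the seed data on $L_0$ outward through the Weyl orbit.

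For uniqueness, I would argue by induction on the minimal word length $\ell(w)$ needed to express $\lambda = w\mu$ with $\mu \in L_0$ and $w \in W\big(E_8^{(1)}\big)$. For $\ell=0$, $\tau(\lambda)$ is prescribed by the initial data. For the inductive step, any simple reflection $s_i$ with $\ell(s_iw)=\ell(w)+1$ corresponds to one of the bilinear equations in Corollary~\ref{lemma:B-sol} (or a $W$-transform thereof), which takes the shape $\tau(\lambda)\,\tau(s_i\lambda)=(\text{bilinear combination of already-known }\tau\text{-values})$. Since the equations are subtraction-free and the propagated values are nonzero by induction, the value $\tau(s_i\lambda)$ is uniquely determined (and a~posteriori nonzero). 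Hence at most one solution exists for any given initial data.

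For existence the key issue, and the main obstacle, is consistency: the same $\lambda\in L$ may be reached by many different reduced words, and one must verify that the propagated value does not depend on the chosen word. I would handle this by invoking Proposition~\ref{prop:tau_Weyl}, which already establishes a genuine representation of $W\big(E_8^{(1)}\big)$ on the $15$-variable cluster $\{\tau_i,\tau_{1,10},\tau_{1,11},\tau_{2,1},\tau_{2,7}\}$; the nontrivial braid relations, notably $s_0s_3s_0=s_3s_0s_3$, were already cleared in Remark~\ref{remark6} via the quantum dilogarithm identity. Given any nonzero initial data on $L_0$, I would identify these values with the $15$ cluster variables, define $\tau(w\mu):=w(\tau(\mu))$ for each $\mu\in L_0$ and $w\in W\big(E_8^{(1)}\big)$, and observe that this assignment is well defined precisely because $W\big(E_8^{(1)}\big)$ acts as a group. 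Corollary~\ref{lemma:B-sol} exhibits the seed relations of $\mathcal{B}$ as consequences of Proposition~\ref{prop:tau_Weyl}, and the $W$-equivariance of the construction then shows that every equation of $\mathcal{B}$, obtained by translating these seeds around the Weyl orbit, is automatically satisfied.

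Finally, generality follows almost for free: the seed set $L_0$ has exactly $15$ elements, matching the $15$ independent tau-type generators of Proposition~\ref{prop:tau_Weyl}, so any nonzero data on $L_0$ is admissible, and by the uniqueness step every solution of $\mathcal{B}$ is recovered by the above construction applied to its own restriction to $L_0$. The essential content of the theorem is thus the translation of the Weyl group representation, already established in Section~\ref{section2} and reformulated in Section~\ref{section6}, into the language of an overdetermined bilinear system on the lattice $L$.
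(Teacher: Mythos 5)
Your overall architecture is the paper's own: the published proof consists of the single observation that a solution already exists by Corollary~\ref{lemma:B-sol} and that it has $15$ free parameters $\tau_1,\dots,\tau_{11},\sigma_1,\sigma_2,x,y$, which suffice to fit the $15$ seed values on $L_0$ (your identification of the initial data with the cluster of Proposition~\ref{prop:tau_Weyl} is the same fitting written in the new variables), the recursive determination of the remaining $\tau(\lambda)$ -- your induction on word length -- being left implicit. The genuine weak point is the step you yourself single out as the main obstacle: well-definedness of $\tau(w(\mu)):=w(\tau(\mu))$ does \emph{not} follow ``because $W\big(E_8^{(1)}\big)$ acts as a group''. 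If $\lambda=w(\mu)=w'(\mu')$ in $L$, one must show the two candidate values agree, i.e., that the stabilizer of $\lambda$ acts trivially on the corresponding tau value; since $w(\tau_i)$ carries a polynomial prefactor in $x,y$, this is precisely the statement that the prefactor depends only on $\lambda=w(e_i)$ and not on the word $w$, which is the content of Theorem~\ref{thm:sF} together with the uniqueness of Proposition~\ref{e8weylorbit} (and the additional normalizations used for $\lambda=w\big(\frac{h_1}{e_{11}}\big)$, $w\big(\frac{h_2}{e_1}\big)$). The paper invokes exactly this when it defines $\tau(\lambda)$ on $L$ before Corollary~\ref{lemma:B-sol}; citing that construction repairs your argument, but the group property alone (even with the braid relations of Remark~\ref{remark6}) does not.

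A second, smaller flaw is in the uniqueness induction: ``subtraction-free, hence the propagated values are nonzero'' is not a valid inference in the skew field $\mathcal{K}$, where a sum of nonzero non-commuting elements can vanish and positivity has no meaning. What the induction actually requires is that the value one divides by is nonzero, and the clean way to obtain this is to run the induction as a comparison with the explicit solution of Corollary~\ref{lemma:B-sol}, whose entries have the form $F_\lambda(x,y)\tau^\lambda$ with $F_\lambda\neq 0$, hence invertible in $\mathcal{K}$; any hypothetical solution with the same seed is then forced, step by step, to coincide with it. One should also verify the combinatorial bookkeeping you assert, namely that every point of $L$ at distance $k+1$ from $L_0$ occurs in a $W$-transform of a seed relation whose other entries lie at distance at most $k$ (for the permutation-type reflections this uses the chains of equalities in the last line of equation~(\ref{eq:seeds})). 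With these repairs your proof is correct and is in fact more detailed on uniqueness than the paper's one-line argument.
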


\begin{proof}
We already have a solution as given in Corollary \ref{lemma:B-sol}.
It has 15 free parameters $\tau_1, \dots, \tau_{11}$, $\sigma_1, \sigma_2$, $x, y$ which are enough to fit the 15 initial data $\tau(\lambda)$ ($\lambda \in L_0$).
\end{proof}

\begin{Remark}
In the commutative case ($q=1$), the space of the solutions of the system $\mathcal{B}$ is of dimension $2$ (modulo rescaling of variables $\tau_i$, $\sigma_i$) and can be identified with the Okamoto space with coordinates $x$, $y$.
Hence the system $\mathcal{B}$ can be considered as a quantum analog of the Pl\"ucker embedding of the Okamoto space~\cite{JM}.
It will be interesting if the system $\mathcal{B}$ can be obtained from some infinite-dimensional quantum integrable hierarchies.
In view of this, we~note that one can eliminate variables $\tau\big(\frac{h_1}{e_i}\big)$ ($i=10,11$) from the third and fourth equations in Corollary~\ref{lemma:B-sol} to derive the bilinear equations in the standard Hirota--Miwa form (see~\cite{ORG}) such as
\begin{gather*}
\bigg(\dfrac{1}{e_1}-\dfrac{1}{e_2}\bigg)\tau(e_3)\tau\bigg(\dfrac{h_1}{e_3}\bigg)+
\bigg(\dfrac{1}{e_2}-\dfrac{1}{e_3}\bigg)\tau(e_1)\tau\bigg(\dfrac{h_1}{e_1}\bigg)+
\bigg(\dfrac{1}{e_3}-\dfrac{1}{e_1}\bigg)\tau(e_2)\tau\bigg(\dfrac{h_1}{e_2}\bigg)=0,
\\[1ex]
\bigg(\dfrac{1}{e_1}-\dfrac{1}{e_2}\bigg)\tau\bigg(\dfrac{h_1}{e_7}\bigg)\tau(e_7)+
\bigg(\dfrac{1}{e_2}-\dfrac{e_7}{h_1}\bigg)\tau(e_1)\tau\bigg(\dfrac{h_1}{e_1}\bigg)+
\bigg(\dfrac{e_7}{h_1}-\dfrac{1}{e_1}\bigg)\tau(e_2)\tau\bigg(\dfrac{h_1}{e_2}\bigg)=0.
\end{gather*}
\end{Remark}

\section{Summary and discussions}\label{section7}

In this paper, we studied the quantization of the affine Weyl group of type $E_8^{(1)}$ and obtained the following several results:

\begin{itemize}\itemsep=0pt
\item
A quantum (non-commutative) version of the affine Weyl group representation is formulated (Theorem \ref{thm:quantum-rep}).

\item
Its realization as adjoint actions is obtained (Theorem \ref{thm:adjoint}).

\item Fundamental polynomials arising from the representation are studied and its characterization is given
(Theorem \ref{thm:sF}).

\item The quantum curve for $E_8^{(1)}$ in~\cite{Moriyama} is rederived by the Weyl group symmetry (Section~\ref{section5}).

\item The quantum bilinear equations are obtained (Section~\ref{section6}).

\end{itemize}

Many of the results can be formulated similarly for the cases $D_5^{(1)}$, $E_6^{(1)}$ and $E_7^{(1)}$ as well.
Such results are summarized in Appendix~\ref{sectionA}.

One of our motivations for studying the quantum curves is the correspondence between spectral theories and topological strings, as observed in~\cite{GHM,HMMO,HMO,HMO3}.
Namely, the determinant of the spectral operator obtained from the quantum curve is described by the free energy of topological strings on the same geometry, which is captured by the period integrals.
After providing the quantum curves and their origins in the affine Weyl groups, we believe that there are many directions to pursue to deepen the correspondence.
Here we list some of future problems.

\begin{itemize}\itemsep=0pt

\item
Given a quantum curve, the study of the {\it spectral problem} is important.
Since the expres\-sion is very huge in the exceptional cases $E_n^{(1)}$, the Weyl group symmetry will play a~fundamental role to control them as discussed in~\cite{Moriyama}.
It is interesting to start with the study of matrix elements of the spectral operators as in~\cite{KMZ2}.

\item
After fixing the spectral operators, besides the spectral determinant, we can study various invariant or covariant quantities including the $F$-polynomials defined above.
We believe that the correspondence is clarified from their relations.

\item
In relation to the spectral problem mentioned above, computation of the quantum period integrals is also an interesting problem~\cite{ACDKV,ADKMV,Huang,HKRS,HSW,MM}.
Even in genus one cases they are technical challenges in particular for the fully massive $E_6$, $E_7$, $E_8$ cases.
Again, we~expect that the Weyl groups serve an important role in studying the periods~\cite{FMS,MNY}.

\item
It is of course an interesting future direction to generalize our characterization to the cases of spectral operators of higher genus to study the correspondence in~\cite{CGM,CGuM}.

\item
The tau functions for some classical (commutative) Painlev\'e equations have the general solutions in terms of the Nekrasov functions (the Kiev formula) in both differential and difference cases (see~\cite{GIL,JNS} for example).
Their extension to quantum Painlev\'e equations is an important problem~\cite{BGM,BS}.

\item
In the context of Painlev\'e equations, the quantum curve appears in two ways:
(i) as the conserved quantity for quantum autonomous Painlev\'e equations discussed in this paper, and
(ii) as a ceratin specialization of the Lax linear equation for classical non-autonomous Painlev\'e equations~\cite{BGG,NRY,Take}.
In order to clarify the relation between the Kiev formula and the partition function of topological strings~\cite{GHM,HMMO}, it will be important to understand the relation between $(i)$ and $(ii)$.

\item
There is a Lens generalization of the discrete Painlev\'e equation~\cite{KeY2} whose identification in the Sakai's classification is not clear so far.
It may be related to a quantization where~$q$ is a root of unity.

\item
The Weyl group symmetry (the iWeyl group) for the various (quantum) Seiberg--Witten curves was obtained (see~\cite{KP,NP} for example).
The Weyl-group actions considered in this paper are expected to be a realization of the iWeyl group.

\end{itemize}

\appendix

\section[Cases D5(1), E6(1) and E7(1)]
{Cases $\boldsymbol{D_5^{(1)}}$, $\boldsymbol{E_6^{(1)}}$ and $\boldsymbol{E_7^{(1)}}$}\label{sectionA}

Here we will give the results for the cases $D_5^{(1)}$, $E_6^{(1)}$ and $E_7^{(1)}$ which are similar\footnote{For the cases $D_5^{(1)}$, $E_6^{(1)}$, $E_7^{(1)}$ one can extend the affine Weyl group by including the automorphisms of the Dynkin diagram.
However we will not consider such extensions here.} to the case~$E_8^{(1)}$.
First, we prepare some notations which are common for all cases.

To describe the Weyl group actions, we put
\begin{gather*}
s_{i,j}=\{e_i \leftrightarrow e_j,\, \tau_i\leftrightarrow \tau_j\},
\\
s^{x}_{i,j}=\bigg\{e_{i}\to \dfrac{h_2}{e_{j}},\, e_{j}\to \dfrac{h_2}{e_{i}},\, h_1\to \dfrac{h_1 h_2}{e_{i} e_{j}}, \, x\to x\dfrac{1+y\frac{h_2}{e_{i}}}{1+ye_{j}},
\\ \hphantom{s^{x}_{i,j}=\bigg\{}
\tau_{i}\to \big(1+ye_{j}\big)\dfrac{\sigma_2}{\tau_{j}},\,
\tau_{j}\to \dfrac{\sigma_2}{\tau_{i}}\bigg(1+y\dfrac{h_2}{e_{i}}\bigg),\,
\sigma_1\to \big(1+ye_{j}\big)\dfrac{\sigma_1 \sigma_2}{\tau_{i} \tau_{j}} \bigg\},
\\
s^{y}_{i,j}=\bigg\{e_i\to \dfrac{h_1}{e_j},\, e_j\to \dfrac{h_1}{e_i},\, h_2\to \dfrac{h_1 h_2}{e_i e_j},\, y\to \dfrac{1+x\frac{e_j}{h_1}}{1+\frac{x}{e_i}}y,
\\ \hphantom{s^{y}_{i,j}=\bigg\{}
\tau_i\to \bigg(1+x\dfrac{e_j}{h_1}\bigg)\dfrac{\sigma_1}{\tau_j},\,
\tau_j\to \dfrac{\sigma_1}{\tau_i}\bigg(1+\dfrac{x}{e_i}\bigg),\,
\sigma_2\to \dfrac{\sigma_1\sigma_2}{\tau_i \tau_j}\bigg(1+\dfrac{x}{e_i}\bigg)\bigg\}.
\end{gather*}
To specify the form of the $F$-polynomials for a given data $((d_1,d_2), (m_1, m_2,\dots))$, we put
\begin{gather*}
F^{x}_{I,J}=
\sum_{i=0}^{d_1} x^i
\prod_{k\in J}\prod_{t=i}^{m_{k}-1}\big(1+q^te_{k}y\big)
\prod_{k\in I}\prod_{t=d_1-m_{k}}^{i-1}\bigg(1+q^t\frac{h_2}{e_{k}}y\bigg)
U_i(y),
\end{gather*}
where ${\rm deg}\,U_i(y)=d_2-\sum_{k\in I}(i-d_1+m_{k})_{+}-\sum_{k \in J}(m_{k}-i)_{+}$, and
\begin{gather*}
F^y_{I,J}=
\sum_{i=0}^{d_2}\prod_{k \in I}\prod_{t=i-m_k}^{-1}\bigg(1+q^t\frac{1}{e_k}x\bigg)
\prod_{k \in J}\prod_{t=0}^{m_k-d_2+i-1}\bigg(1+q^t\frac{e_k}{h_1}x\bigg)V_i(x) \, y^i,
\end{gather*}
where ${\rm deg}\,V_i(x)=d_1-\sum_{k\in I}(m_{k}-i)_{+}-\sum_{k\in J}(i-d_2+m_{k})_{+}$.
With this notation, the previous result for the $E_8^{(1)}$ case is given by
\begin{gather*}
s_0=s^x_{10,11},\qquad
s_1=s_{8,9},\qquad
s_2=s_{7,8},\qquad
s_3=s^y_{1,7},\qquad
s_4=s_{1,2},
\\
s_5=s_{2,3},\qquad
s_6=s_{3,4},\qquad
s_7=s_{4,5},\qquad
s_8=s_{5,6}.
\end{gather*}
Besides, the notation is applicable to all the other lower-rank cases.
All these results are consistent with the quantum curves and the Weyl actions given in~\cite{Moriyama}.

\medskip\noindent
{\it Case $E_7^{(1)}$}:
\begin{itemize}\itemsep=0pt
\item
The Weyl group $W\big(E_7^{(1)}\big)$ corresponding to the Dynkin diagram
\begin{gather*}
\begin{array}{@{}c@{\ }c@{\ }c@{\ }c@{\ }c@{\ }c@{\ }c@{\ }c@{\ }c@{\ }c@{\ }c@{\ }c@{\ }c@{\ }c@{\ }c}
&&&&&&s_0\\
&&&&&&|\\
s_1&\text{---}&s_2&\text{---}&s_3&\text{---}&s_4&\text{---}&s_5&\text{---}&s_6&\text{---}&s_7,
\end{array}
\end{gather*}
can be realized as
\begin{gather*}
s_0=s^{x}_{9, 10}, \qquad
s_1=s_{7, 8}, \qquad
s_2=s_{6, 7}, \qquad
s_3=s_{5, 6},
\\
s_4=s^{y}_{1, 5}, \qquad
s_5=s_{1, 2}, \qquad
s_6=s_{2, 3}, \qquad
s_7=s_{3, 4}.
\end{gather*}
\item
Defining conditions for the $F$-polynomials are given as follows.
If we collect terms with the same power of $x$, the $F$-polynomials take the form $F^{x}_{\{9\}, \{10\}}$, while if we collect terms with the same power of $y$, they take the form $F^y_{\{1,2,3,4\}, \{5,6,7,8\}}$.

\item Under the condition $\frac{h_1^4 h_2^2}{e_1\cdots e_8e_9^2e_{10}^2}=1$, we have the quantum curve
\begin{align*}
P_{E_7^{(1)}}={}&\prod _{i=1}^{4} \bigg(1+\frac{x}{q e_i}\bigg)+\bigg\{e_{10} (1+q)
+e_{10} \bigg(\sum _{i=5}^{8} \frac{e_i}{h_1}+\sum _{i=1}^{4} \frac{1}{e_i}\bigg)x+c x^2
\\
&+\kappa x^3 \bigg(\sum _{i=5}^{8} \frac{h_1}{e_i}+\sum_{i=1}^{4} e_i\bigg)
+\frac{\kappa}{q} (1+q) x^4\bigg\}y+e_{10}^2 q \prod _{i=5}^{8} \bigg(1+\frac{e_ix}{h_1}\bigg) y^2,
\end{align*}
where $\kappa=\frac{h_2}{q e_1e_2e_3e_4e_9}$ and $c\in{\mathbb C}$.
\end{itemize}

\medskip\noindent
{\it Case $E_6^{(1)}$}:
\begin{itemize}\itemsep=0pt
\item
The Weyl group $W\big(E_6^{(1)}\big)$ corresponding to the Dynkin diagram
\begin{gather*}
\begin{array}{@{}c@{\ }c@{\ }c@{\ }c@{\ }c@{\ }c@{\ }c@{\ }c@{\ }c@{\ }c@{\ }c@{\ }c@{\ }c@{\ }c@{\ }}
&&&&s_0\\
&&&&|\\
&&&&s_6\\
&&&&|\\
s_1&\text{---}&s_2&\text{---}&s_3&\text{---}&s_4&\text{---}&s_5,
\end{array}
\end{gather*}
can be realized as
\begin{gather*}
s_0=s_{8, 9}, \qquad
s_1=s_{5, 6}, \qquad
s_2=s_{4, 5}, \qquad
s_3=s^{y}_{1, 4},
\\
s_4=s_{1, 2}, \qquad
s_5=s_{2, 3}, \qquad
s_6=s^{x}_{7, 8}.
\end{gather*}
\item
The $F$-polynomials take respectively the form of $F^{x}_{\{7\}, \{8,9\}}$ and $F^y_{\{1,2,3\}, \{4,5,6\}}$ if we collect the same power of $x$ and $y$.

\item Under the condition $\frac{h_1^3 h_2^2}{e_1\cdots e_6e_7^2e_8e_9}=1$, we have the quantum curve
\begin{gather*}
P_{E_6^{(1)}}=
\bigg\{e_8+e_9+cx+\frac{h_2}{q e_1 e_2 e_3 e_7}\bigg(\sum _{i=4}^{6} \frac{h_1}{e_i}+\sum_{i=1}^{3} e_i\bigg)x^2+\frac{h_2 (1+q)}{q^2 e_1 e_2 e_3 e_7}x^3\bigg\}y
\\ \hphantom{P_{E_6^{(1)}}=}
+e_8 e_9 \prod_{i=4}^{6} \bigg(1+\frac{e_i x}{h_1}\bigg) y^2
+\prod_{i=1}^{3} \bigg(1+\frac{x}{q e_i}\bigg).
\end{gather*}
\end{itemize}

\noindent
{\it Case~$D_5^{(1)}$}:
\begin{itemize}\itemsep=0pt
\item
The Weyl group $W\big(D_5^{(1)}\big)$ corresponding to the Dynkin diagram
\begin{gather*}
\begin{array}{@{}c@{\ }c@{\ }c@{\ }c@{\ }c@{\ }c@{\ }c@{\ }c@{\ }c@{\ }c@{\ }c@{\ }c@{\ }c@{\ }c@{}}
&&s_0&&s_4\\
&&|&&|\\
s_1&\text{---}&s_2&\text{---}&s_3&\text{---}&s_5,
\end{array}
\end{gather*}
can be realized as
\begin{gather*}
s_0=s_{7, 8}, \qquad
s_1=s_{3, 4}, \qquad
s_2=s^{y}_{3, 7}, \qquad
s_3=s^{x}_{1, 5}, \qquad
s_4=s_{1, 2}, \qquad
s_5=s_{5, 6}.
\end{gather*}
\item
The $F$-polynomials take respectively the form of $F^{x}_{\{1,2\}, \{5,6\}}$ and $F^y_{\{7,8\}, \{3,4\}}$ if we collect the same power of $x$ and $y$.

\item Under the condition $\frac{h_1^2 h_2^2}{e_1\cdots e_8}=1$, we have the quantum curve
\begin{gather*}
P_{D_5^{(1)}}=\prod_{i=7}^8\bigg(1+\frac{x}{q e_i}\bigg)
+\bigg(e_5+e_6+c x+\frac{(e_1+e_2) h_2}{q e_1 e_2 e_7 e_8}x^2\bigg)y
+e_5 e_6 \prod_{i=3}^4\bigg(1+\frac{e_i x}{h_1}\bigg) y^2.
\end{gather*}
\end{itemize}

\section{Standard realizations in commutative case}\label{sectionB}

In Sakai's theory~\cite{Sakai:2001}, the geometry relevant for the 2nd order discrete/continuous Painlev\'e equations are classified as in the following list:
\def\rr{\rightarrow}
\def\se{\searrow}
\def\ne{\nearrow}
\arraycolsep=3pt

\[{
\begin{array}{lcccccccccccccccccc}
{\rm elliptic}\quad &E_8\\
&&&&&&&&&&&&&&&&A_1\\[-3mm]
&&&&&&&&&&&&&&&\ne\\
{\rm multiplicative}\quad
&E_8&\rr&E_7&\rr&E_6&\rr&D_5&\rr&A_4
&&\rr&A_{2+1}&\rr&A_{1+1}&\rr&A_1&\rr&A_0\\[5mm]
{\rm additive}\quad &E_8&\rr&E_7&\rr&E_6&&\rr&&D_4
&&\rr&A_3&\rr&A_{1+1}&\rr&A_1&\rr&A_0\\
&&&&&&&&&&&&&\se&&\se\\
&&&&&&&&&&&&&&A_2&\rr&A_1&\rr&A_0\\
\end{array}
}\]

\vskip-10mm
\setlength{\unitlength}{0.8mm}
\begin{picture}(100,15)(-100,-3)
\put(0,0){\line(1,0){94}}
\put(0,25){\line(1,0){94}}
\put(0,0){\line(0,1){25}}
\put(94,0){\line(0,1){25}}
\end{picture}

This list is the same as the degeneration scheme of the $E$-string.
The classes of ``elliptic'', ``multiplicative'' and ``additive'' mean the types of the difference equation and correspond to the gauge theories in 6D/5D/4D (see, e.g.,~\cite{BLMST,MY}).
The cases in the box admit the continuous flows (of the original Painlev\'e equation), and the relation between their Hamiltonians and the $D=4$, ${\rm SU}(2)$ Seiberg--Witten curves was observed in~\cite{KMNOY:pencil}.
Symbols $A_n$, $D_n$, $E_n$ represent the \text{types} of the symmetry (affine in the Painlev\'e equations) and correspond to the (non-affine) flavor symmetry of the gauge theory.\footnote{Since the gauge theories are associated with the autonomous limit of the Painlev\'e equations, the affine Weyl groups are reduced to the finite Weyl groups.}

There are two standard ways to realize the above geometry, namely $(i)$ nine-point blow-up of $\P^2$ or $(ii)$ eight-point blow-up of $\P^1\times\P^1$.
In the most generic case, these points determine an elliptic curve and we have the elliptic Painlev\'e equation.
Here we will give the multiplicative case in the realizations $(i)$ and $(ii)$ together with their relations.

\medskip
\noindent
{\it $(i)$ $\P^2$-realization}. Consider a parametrization of a point $p_3(u)=(x(u):y(u):1) \in \P^2$
\begin{gather}\label{eq:para_p2}
x(u)=u, \qquad y(u)=\dfrac{\epsilon_0}{u}-u^2 \qquad \big(u \in \P^1\big).
\end{gather}
The equations parametrize a cubic curve $C_3$ (with a node) given by
\begin{gather*}
\varphi_3(x,y)=x^3+x y-\epsilon_0=0.
\end{gather*}
The group structure of the curve $C_3$ is multiplicative, i.e., $3n$ points $p_3(u_i)$ ($i=1,\dots,3n$) are intersections of $C_3$ and a curve of degree $n$ iff $u_1\cdots u_{3n}=\epsilon_0^n$.
Hence, the blow-up of $\P^2$ at the nine points $p_3(\epsilon_i)$ has the elliptic fibration iff $\epsilon_1\cdots \epsilon_{9}=\epsilon_0^3$.

\medskip
\noindent
{\it $(ii)$ $\P^1\times \P^1$-realization}. Consider a parametrization of a point $p_{2,2}(u)=(f(u), g(u)) \in \P^1\times \P^1$
\begin{gather}\label{eq:para_p1p1}
f(u)=u+\frac{h_1}{u}, \qquad g(u)=u+\frac{h_2}{u}.
\end{gather}
The equations parametrize a bidegree (2,2) curve $C_{2,2}$ (with a node) given by
\begin{gather*}
\varphi_{2,2}(f,g)= \frac{(f-g)(h_2f-h_1g)}{h_1-h_2}+(h_1-h_2)=0.
\end{gather*}
This curve is also multiplicative; $N=2(m+n)$ points $p_{2,2}(u_i)$ ($i=1,\dots, N$) are intersections of $C_{2,2}$ and a curve of bidegree $(m,n)$ iff $u_1\cdots u_{N}=h_1^m h_2^n$.
Hence, the blow-up of $\P^1\times \P^1$ at~the eight points $p_{2,2}(e_i)$ has the elliptic fibration iff $e_1\cdots e_{8}=h_1^2h_2^2$.

\begin{Proposition}
The realizations $(i)$ and $(ii)$ are equivalent through the following birational symplectic transformation of variables $(x,y)$ and $(f,g)$ with the identification of parameters $(\epsilon_0 \dots, \epsilon_9)$ and $(h_1,h_2, e_1, \dots, e_8)$ given by
\begin{gather}
f=\dfrac{\frac{\epsilon_0}{\epsilon_1}-\epsilon_1 x-y}{x-\epsilon_1}, \qquad
g=\dfrac{\frac{\epsilon_0}{\epsilon_2}-\epsilon_2 x-y}{x-\epsilon_2}, \nonumber
\\
h_1=\dfrac{\epsilon_0}{\epsilon_1}, \qquad
h_2=\dfrac{\epsilon_0}{\epsilon_2},\qquad
e_1=\dfrac{\epsilon_0}{\epsilon_1\epsilon_2}, \qquad
e_i=\epsilon_{i+1} \qquad (i>1).\label{eq:p2p1p1rel}
\end{gather}
\end{Proposition}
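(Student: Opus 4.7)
The plan is to exhibit the birational map in (\ref{eq:p2p1p1rel}) as the explicit extension of the parameter-level identification $u \mapsto u$ between the two rational parametrizations (\ref{eq:para_p2}) and (\ref{eq:para_p1p1}). The key computation is a direct evaluation of $(f,g)$ on the parametrized cubic $p_3(u)$. Substituting $x = u$, $y = \epsilon_0/u - u^2$ into $f = (\epsilon_0/\epsilon_1 - \epsilon_1 x - y)/(x-\epsilon_1)$, the numerator becomes $u^{-1}(u^3 - \epsilon_1 u^2 + (\epsilon_0/\epsilon_1) u - \epsilon_0) = u^{-1}(u-\epsilon_1)(u^2 + \epsilon_0/\epsilon_1)$, so that $f(p_3(u)) = u + (\epsilon_0/\epsilon_1)/u$. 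With the identification $h_1 = \epsilon_0/\epsilon_1$ this is precisely the first component of $p_{2,2}(u)$, and the analogous computation gives $g(p_3(u)) = u + h_2/u$ with $h_2 = \epsilon_0/\epsilon_2$. This simultaneously shows that the cubic $C_3$ is mapped onto the $(2,2)$-curve $C_{2,2}$ (since both are the images of $\mathbb{P}^1$ under these parametrizations) and that $p_3(u)\mapsto p_{2,2}(u)$ on the smooth locus.

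Next I would match the nine blown-up points with the eight blown-up points plus one exceptional curve. For $i\geq 3$, the computation above immediately gives $p_3(\epsilon_i)\mapsto p_{2,2}(\epsilon_i)$, so the identification $e_i = \epsilon_{i+1}$ ($i>1$) places seven of the blow-up points in correspondence. The remaining two points $p_3(\epsilon_1)$ and $p_3(\epsilon_2)$ are exactly the indeterminacy locus of the map: both numerator and denominator of $f$ vanish at $p_3(\epsilon_1)$, and similarly for $g$ at $p_3(\epsilon_2)$. These are the two base points that must be blown up, and to pass from (nine points on $\mathbb{P}^2$) to (eight points on $\mathbb{P}^1\times\mathbb{P}^1$) one then contracts the strict transform of the unique line through $p_3(\epsilon_1)$ and $p_3(\epsilon_2)$. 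The third intersection of this line with $C_3$ occurs at the $u$-value $\epsilon_0/(\epsilon_1\epsilon_2)$ by the multiplicative group law on the nodal cubic (so that $u_1 u_2 u_3 = \epsilon_0$ for three collinear points); this is precisely the formula $e_1 = \epsilon_0/(\epsilon_1\epsilon_2)$, so the contracted line becomes the 8th point $p_{2,2}(e_1)$ on the target side.

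As a sanity check one computes
\begin{gather*}
e_1 e_2 \cdots e_8 = \frac{\epsilon_0}{\epsilon_1\epsilon_2}\,\epsilon_3\cdots\epsilon_9 = \frac{\epsilon_0\,(\epsilon_1\cdots\epsilon_9)}{\epsilon_1^2\epsilon_2^2} = \frac{\epsilon_0^4}{\epsilon_1^2\epsilon_2^2} = h_1^2 h_2^2,
\end{gather*}
so that the nine-point constraint $\epsilon_1\cdots\epsilon_9 = \epsilon_0^3$ in the $\mathbb{P}^2$-realization is equivalent to the eight-point constraint $e_1\cdots e_8 = h_1^2 h_2^2$ in the $\mathbb{P}^1\times\mathbb{P}^1$-realization, confirming that the two elliptic fibration conditions are identified. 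Birationality is automatic once one observes that solving (\ref{eq:p2p1p1rel}) for $(x,y)$ yields rational expressions in $(f,g)$; the two linear-in-$(x,y)$ equations for $f$ and $g$ are generically independent.

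Finally, the symplectic property amounts to checking $dx\wedge dy = \mathrm{const}\cdot df\wedge dg$ (or its logarithmic counterpart appropriate for the multiplicative setting). This is a straightforward computation of a $2\times 2$ Jacobian using the explicit formulas for $f$, $g$ together with the cubic relation $\varphi_3(x,y)=0$, which allows one to simplify the determinant. The expected main obstacle is not any single step individually, but rather the bookkeeping of the base point analysis: confirming that the indeterminacy of the forward map is resolved by blowing up precisely $p_3(\epsilon_1)$ and $p_3(\epsilon_2)$, that the line through them is the unique $(-1)$-curve that must be contracted, and that no other exceptional behavior occurs along $C_3$. Once the parametrized calculation above is in hand, all of this can be read off from the linear factors in numerators and denominators of (\ref{eq:p2p1p1rel}).
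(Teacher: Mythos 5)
Most of what you write is correct and fleshes out what the paper dispatches as ``a direct computation'': the evaluation $f(p_3(u))=u+\frac{\epsilon_0/\epsilon_1}{u}$, $g(p_3(u))=u+\frac{\epsilon_0/\epsilon_2}{u}$ showing that the parametrizations \eqref{eq:para_p2}, \eqref{eq:para_p1p1} and the curves $C_3$, $C_{2,2}$ correspond; the identification of the indeterminacy points $p_3(\epsilon_1)$, $p_3(\epsilon_2)$ and, via the multiplicative group law on the nodal cubic, of the contracted line with $p_{2,2}(e_1)$, $e_1=\frac{\epsilon_0}{\epsilon_1\epsilon_2}$ (the paper only records the indeterminacy points, so your group-law argument is a nice addition); the invertibility of the parameter map and the consistency of the two elliptic-fibration constraints. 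Up to this point your route is essentially the paper's, with more detail.

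The genuine gap is in your treatment of the symplectic property, which is part of the statement. The relevant symplectic structure is not $\mathrm{d}x\wedge \mathrm{d}y$ up to a constant but the $2$-form with pole along the anticanonical (nodal) curve, and the identity to verify is the one the paper records as \eqref{eq:symp}, namely $\frac{\mathrm{d}x\wedge \mathrm{d}y}{\varphi_3(x,y)}=\frac{\mathrm{d}f\wedge \mathrm{d}g}{\varphi_{2,2}(f,g)}$, i.e., the rational-function identity $\det\frac{\partial(f,g)}{\partial(x,y)}=\frac{\varphi_{2,2}(f,g)}{\varphi_3(x,y)}$ valid identically on the surface. Your proposed check ``$\mathrm{d}x\wedge \mathrm{d}y=\mathrm{const}\cdot \mathrm{d}f\wedge \mathrm{d}g$'' would simply fail: a short computation from \eqref{eq:p2p1p1rel} gives
\begin{gather*}
\det\frac{\partial(f,g)}{\partial(x,y)}
=\frac{(\epsilon_2-\epsilon_1)\Big[\big(\epsilon_1+\epsilon_2+\frac{\epsilon_0}{\epsilon_1\epsilon_2}\big)x+y-\epsilon_1\epsilon_2-\frac{\epsilon_0(\epsilon_1+\epsilon_2)}{\epsilon_1\epsilon_2}\Big]}{(x-\epsilon_1)^2(x-\epsilon_2)^2},
\end{gather*}
which is visibly non-constant (and indeed equals $\varphi_{2,2}(f,g)/\varphi_3(x,y)$, as one can confirm by direct substitution). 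Likewise, your suggestion to ``simplify the determinant using $\varphi_3(x,y)=0$'' misconstrues the statement: the equality of $2$-forms must hold off the curves, as an identity of rational functions in $(x,y)$, not merely modulo the cubic relation, so no restriction to $C_3$ is permitted. Replacing your last step by the verification of the Jacobian identity above (equivalently, of \eqref{eq:symp}) closes the gap; the vague appeal to ``a logarithmic counterpart in the multiplicative setting'' does not, since the form $\frac{\mathrm{d}x\wedge \mathrm{d}y}{xy}$ appears only after passing to the coordinates $(x,w)$ of the subsequent proposition, not in the $(x,y)$ and $(f,g)$ charts used here.
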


\begin{proof}
The relation of the parameters $(\epsilon_0 \dots, \epsilon_9)$ and $(h_1,h_2, e_1, \dots, e_8)$ is invertible (it is a~``li\-near'' isomorphism written in multiplicative coordinates).
Also, by a direct computation, we~see that the transformation between $(x,y)$ and $(f,g)$ is birational with the indeterminate points $p_3(\epsilon_1), p_3(\epsilon_2) \in \P^2$ and $p_{2,2}(e_1) \in \P^1\times \P^1$.
It is easy to check the parameterizations (\ref{eq:para_p2}), (\ref{eq:para_p1p1}) and the curves $C_3$, $C_{2,2}$ are mapped to each other by the transformation (\ref{eq:p2p1p1rel}).
Since
\begin{gather}\label{eq:symp}
\omega:=\frac{\text{d}x\wedge\text{d}y}{\varphi_3(x,y)}=\frac{\text{d}f\wedge\text{d}g}{\varphi_{2,2}(f,g)},
\end{gather}
equation~(\ref{eq:p2p1p1rel}) gives a symplectic transformation w.r.t.\ this symplectic form.
\end{proof}

Using the transformation (\ref{eq:p2p1p1rel}), we can derive the actions of affine Weyl group $W\big(E^{(1)}_8\big)$.
\begin{Proposition}
There exists a unique birational symplectic representation of affine Weyl group $W\big(E^{(1)}_8\big)$ with the following properties:
\begin{enumerate}\itemsep=0pt
\item[$(i)$] In variables $(x,y,\epsilon_0 \dots, \epsilon_9)$, the action is given by
\begin{gather*}
s_0=\bigg\{\epsilon_0 \to \dfrac{\epsilon_0^2}{\epsilon_1\epsilon_2\epsilon_3},\,
\epsilon_1 \to \dfrac{\epsilon_0}{\epsilon_2\epsilon_3},\,
\epsilon_2 \to \dfrac{\epsilon_0}{\epsilon_3\epsilon_1},\,
\epsilon_3 \to \dfrac{\epsilon_0}{\epsilon_1\epsilon_2},\,
x\to \tilde{x},\ y\to \tilde{y}\bigg\},
\\[1ex]
s_i=\{\epsilon_i \leftrightarrow \epsilon_{i+1}\} \qquad (i=1, \dots, 8),
\end{gather*}
where $\tilde{x}$ and $\tilde{y}$ are certain rational functions of $(x,y)$.

\item[$(ii)$] In variables $(f,g,h_1,h_2, v_1, \dots, v_8)$, the action is given by
\begin{gather*}
\begin{array}{l}
s_0=\{e_1 \leftrightarrow e_2\}, \qquad
s_i=\{e_{i-1} \leftrightarrow e_{i}\} \qquad (i=3, \dots, 8),
\\[1ex]
s_1=\{h_1 \leftrightarrow h_2,\, f \leftrightarrow g\},\qquad
s_2=\bigg\{h_2 \to \dfrac{h_1h_2}{e_1e_2},\, e_1 \to \dfrac{h_1}{e_2},\, e_2\to\dfrac{h_1}{e_1},\, g\to \tilde{g}\bigg\},
\end{array}
\end{gather*}
where $\tilde{g}$ is a certain rational function of $(f,g)$.
\end{enumerate}
\end{Proposition}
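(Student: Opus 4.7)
The plan is to construct the representation geometrically from the Picard-lattice action on the blown-up surface, then derive uniqueness from the symplectic condition and the configuration of blow-up points.

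First I would determine the action on parameters from the natural linear action of $W\big(E_8^{(1)}\big)$ on the Picard lattice. In realization $(i)$, $\mathrm{Pic}=\mathbb{Z}H\oplus\bigoplus_{i=1}^9\mathbb{Z}E_i$ carries the simple roots $\alpha_0=H-E_1-E_2-E_3$ and $\alpha_i=E_i-E_{i+1}$ ($i=1,\ldots,8$). Translating the reflections to multiplicative coordinates via $h=\exp H$, $\epsilon_i=\exp E_i$ (with $\epsilon_0$ encoding the hyperplane class through $\epsilon_0=h^3/\prod \epsilon_i^{?}$ suited to \eqref{eq:para_p2}) reproduces exactly the stated action on $(\epsilon_0,\ldots,\epsilon_9)$. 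Analogously, realization $(ii)$ uses $\mathrm{Pic}=\mathbb{Z}H_1\oplus\mathbb{Z}H_2\oplus\bigoplus_{i=1}^8\mathbb{Z}E_i$ with simple roots tailored so that $\alpha_2=H_1-E_1-E_2$ is the unique bi-linear root, giving the action on $(h_1,h_2,e_1,\ldots,e_8)$ displayed in $(ii)$. Braid and involution relations hold tautologically at the Picard-lattice level.

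Next I would construct the birational actions on $(x,y)$ and $(f,g)$. For the "easy" reflections, which merely exchange two exceptional divisors, the corresponding automorphism of the base surface is the identity, so the action on affine coordinates is trivial. For the non-trivial generator $s_0$ in $(i)$ the root $H-E_1-E_2-E_3$ corresponds geometrically to the standard quadratic Cremona transformation of $\mathbb{P}^2$ centered at $p_3(\epsilon_1),p_3(\epsilon_2),p_3(\epsilon_3)$; writing this transformation in coordinates yields the rational functions $\tilde x,\tilde y$. For $s_2$ in $(ii)$ the root $H_1-E_1-E_2$ corresponds to the elementary transformation of $\mathbb{P}^1\times\mathbb{P}^1$ based at $p_{2,2}(e_1),p_{2,2}(e_2)$, producing $\tilde g$. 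The essential observation is that these transformations preserve the fixed cubic $\varphi_3=0$ (resp.\ the $(2,2)$-curve $\varphi_{2,2}=0$) and send the remaining points $p_3(\epsilon_i)$ to $p_3(s_0(\epsilon_i))$ (resp.\ $p_{2,2}(e_i)$ to $p_{2,2}(s_2(e_i))$), which is equivalent to the Cremona/elementary map inducing the reflection $s_0$ (resp.\ $s_2$) on the Picard lattice. Symplecticity with respect to \eqref{eq:symp} follows because $\omega$ is an anti-canonical form and both transformations preserve the anti-canonical class.

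For uniqueness, the parameter action is forced by the Picard-lattice structure, and with it fixed, the birational action on $(x,y)$ must map the parametrized configuration $\{p_3(\epsilon_i)\}$ onto $\{p_3(s_k\epsilon_i)\}$ while preserving $\omega$; the Cremona/elementary transformation is the unique such birational symplectic map, any residual ambiguity being killed by requiring the permutation-type reflections to act trivially on $(x,y)$. Consistency of $(i)$ and $(ii)$ is automatic: the birational isomorphism \eqref{eq:p2p1p1rel} intertwines the two constructions on the common open stratum and therefore identifies the two Weyl-group representations. The main obstacle is the bookkeeping of the explicit Cremona and elementary transformations and the direct verification that the resulting $\tilde x,\tilde y,\tilde g$ satisfy all $E_8^{(1)}$ braid relations when combined with the permutation reflections; this is by-passed by the uniqueness argument, since any birational symplectic action realizing the Picard-lattice reflections must coincide with the constructed one and therefore inherits the relations tautologically.
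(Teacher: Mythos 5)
Your route (realize the non-trivial reflections directly as the quadratic Cremona map and the elementary transformations, then get uniqueness and the braid relations from a ``unique lift'' argument) is the standard Sakai-type construction and is genuinely different from the paper's proof, but as written it has two concrete gaps. First, the key geometric claim is false as stated: since $s_0$ sends $\epsilon_0\to\epsilon_0^2/(\epsilon_1\epsilon_2\epsilon_3)$, the normalized cubic $\varphi_3(x,y)=x^3+xy-\epsilon_0$ and its parametrization \eqref{eq:para_p2} are \emph{not} preserved, and the bare Cremona transformation centered at $p_3(\epsilon_1),p_3(\epsilon_2),p_3(\epsilon_3)$ neither maps $\varphi_3=0$ to itself nor fixes the six remaining points $p_3(\epsilon_4),\dots,p_3(\epsilon_9)$. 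To realize $s_0$ one must compose with a projective renormalization that brings the image cubic back to the normal form, and then verify, using the multiplicative group law on the nodal cubic, that the composite induces exactly the stated parameter change (in particular fixing $\epsilon_4,\dots,\epsilon_9$ as parameters of the \emph{new} parametrization). That verification is precisely the ``bookkeeping'' you defer, so it is not bypassed. Second, the uniqueness-of-lift claim that carries both the uniqueness statement and all the $E_8^{(1)}$ relations is asserted, not proved: you need (a) that for generic parameters the only birational map preserving $\omega$ of \eqref{eq:symp} and acting as the identity on the configuration is the identity (a genericity/period-map argument), and (b) that an arbitrary representation satisfying only properties $(i)$ and $(ii)$ must transform the parametrized configurations equivariantly at all --- this is not among the hypotheses, so your uniqueness argument does not yet prove the Proposition's uniqueness claim.

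The paper avoids all of this with a transport trick that you could adopt: every generator acts trivially on the coordinates in one of the two realizations ($s_0$ of $(i)$ is just $e_1\leftrightarrow e_2$ in the $\P^1\times\P^1$ picture, and $s_2$ of $(ii)$ is just $\epsilon_2\leftrightarrow\epsilon_3$ in the $\P^2$ picture), the obvious subgroups being $\mathfrak{S}_9$ and $\mathfrak{S}_2\times\mathfrak{S}_8$. Conjugating these trivial actions through the fixed birational symplectic change of variables \eqref{eq:p2p1p1rel} simultaneously produces the explicit $\tilde{x}$, $\tilde{y}$, $\tilde{g}$, makes uniqueness immediate (each generator is pinned down in the realization where it acts only on parameters), and reduces every Weyl relation to a check in a realization where it involves at most permutation-type data. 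If you want to keep your geometric construction, you must either carry out the Cremona-plus-normalization computation and supply the no-nontrivial-automorphism lemma, or reroute the uniqueness and the relations through the compatibility of the two realizations as the paper does.
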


\begin{proof}
In the $\P^2$ realization, we have obvious symmetries $s_i=\{\epsilon_i \leftrightarrow \epsilon_{i+1}\}$ ($i=1, \dots, 8$) which generate $\mathfrak{S}_9$, and also in the $\P^1\times \P^1$ realization we have $\mathfrak{S}_2 \times \mathfrak{S}_8=\langle s_1=\{h_1 \leftrightarrow h_2, f \leftrightarrow g\}\rangle \times \langle s_0=\{e_1 \leftrightarrow e_2\}, s_i=\{e_{i-1} \leftrightarrow e_{i}\} (i=3, \dots.8) \rangle$ (see Figure~\ref{fig:two-E8}).
\begin{figure}[h]
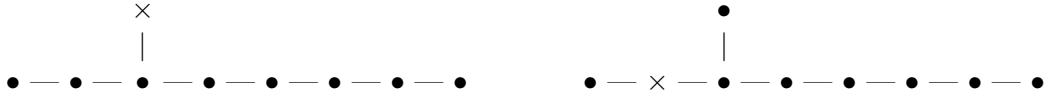

{\arraycolsep=2pt
$$
\begin{array}{@{}c@{\ }c@{\ }c@{\ }c@{\ }c@{\ }c@{\ }c@{\ }c@{\ }c@{\ }c@{\ }c@{\ }c@{\ }c@{\ }c@{\ }c@{}}
&&&&\times\\
&&&&|\\
\bullet&\text{---}&\bullet&\text{---}&\bullet&\text{---}&\bullet&\text{---}&\bullet&\text{---}&\bullet&\text{---}&\bullet&\text{---}&\bullet
\end{array}
\qquad\qquad
\begin{array}{@{}c@{\ }c@{\ }c@{\ }c@{\ }c@{\ }c@{\ }c@{\ }c@{\ }c@{\ }c@{\ }c@{\ }c@{\ }c@{\ }c@{\ }c@{}}
&&&&\bullet\\
&&&&|\\
\bullet&\text{---}&\times&\text{---}&\bullet&\text{---}&\bullet&\text{---}&\bullet&\text{---}&\bullet&\text{---}&\bullet&\text{---}&\bullet
\end{array}
$$
\caption{$\mathfrak{S}_9$ (left) and $\mathfrak{S}_2 \times \mathfrak{S}_8$ (right) subgroups in $W\big(E_8^{(1)}\big)$.}
\label{fig:two-E8}
}
\end{figure}
By mixing up the actions $\mathfrak{S}_9$ and $\mathfrak{S}_2\times\mathfrak{S}_8$, one can obtain the full generators for $W\big(E^{(1)}_8\big)$.
The non-trivial actions $s_0$ in~$(i)$ and $s_2$ in $(ii)$ can be obtained from the obvious actions in opposite realization through the relation (\ref{eq:p2p1p1rel}).
The explicit forms of $\tilde{x}$, $\tilde{y}$, $\tilde{g}$ can be determined by
\begin{gather*}
s_0(x)=\dfrac{x \epsilon_0(\epsilon_0-w)}{\epsilon_0^2-\epsilon_1\epsilon_2\epsilon_3w},\qquad
s_0(w)=w,\qquad
w=\dfrac{(x-\epsilon_1)(x-\epsilon_2)(x-\epsilon_3)}{\epsilon_0(x^3+x y-\epsilon_0)},
\end{gather*}
and
\begin{gather*}
s_2\Bigg(\frac{g-\big(v_1+\frac{h_2}{v_1}\big)}{g-\big(v_2+\frac{h_2}{v_2}\big)}\Bigg)
=\dfrac{f-\big(v_2+\frac{h_1}{v_2}\big)}{f-\big(v_1+\frac{h_1}{v_1}\big)}
\dfrac{g-\big(v_1+\frac{h_2}{v_1}\big)}{g-\big(v_2+\frac{h_2}{v_2}\big)}.
\end{gather*}
Thus, we obtain the desired results.
\end{proof}

\begin{Remark}
Written in the coordinates $(x,w)$, the Weyl group representation of $W\big(E_8^{(1)}\big)$ is the same as that in Section~\ref{section1} up to a change of the parameters (note that $w$ here corresponds to $y$ in Section~\ref{section1}).
In this coordinate, the symplectic form (\ref{eq:symp}) takes a simple form $\omega=\frac{\text{d}x\wedge\text{d}w}{x w}$.
This explains the reason why the realization in Section~\ref{section1} is suitable for quantization.
\end{Remark}

In closing this appendix, we will give explicit forms of the pencil of the conserved elliptic curves.

In the realization $(i)$ the conserved curve is given by
\begin{gather*}
0=\lambda \varphi_3+m_1 \big({-}x^2 \epsilon_0-x y^2+y \epsilon_0\big)
-m_2 x (x y-\epsilon_0)-m_3x^3+m_4 x^2-m_5 x
\\ \hphantom{0=}
{}+m_6
-m_7 \big(x^2+y\big){\epsilon_0}^{-1}
+m_8 \big(x^2 y+x \epsilon_0+y^2\big){\epsilon_0}^{-2}
-m_9 \big({-}3 x^3 \epsilon_0+y^3+3 \epsilon_0^2\big){\epsilon_0}^{-3},
\end{gather*}
where
$\sum_{i=0}^9 m_iz^i=\prod_{j=1}^9(1+\epsilon_i z)$
under the constraint $\epsilon_0^3=\epsilon_1\cdots \epsilon_9$.

In the realization $(ii)$ the conserved curve is given by
\begin{gather*}
0=\lambda \varphi_{2,2}
-m_1 \big[f g (g h_1-f h_2)+f h_2^2-gh_1^2\big]
+m_2\frac{(g h_1-f h_2){}^2}{h_1-h_2}-m_3 (g h_1-f h_2)
\\ \hphantom{0=}
{}+m_4(h_1-h_2)-m_5 (f-g)+m_6\frac{(f-g)^2}{h_1-h_2}
-m_7\frac{f g (f-g)-f h_1+gh_2}{h_1 h_2}
\\ \hphantom{0=}
{}+(h_1-h_2) \big(\big(f^2-2 h_1\big) \big(g^2-2 h_2\big)-h_1^2-h_2^2\big),
\end{gather*}
where
$\sum_{i=0}^8 m_iz^i=\prod_{j=1}^8(1+v_i z)$
under the constraint $h_1^2h_2^2=v_1\cdots v_8$.

Written in the Weierstrass form these curves coincide with the Seiberg--Witten curve for 5D $E$-string~\cite{ES03}.
In the quantum case, we do not know whether such cubic or bi-quadratic form is available or not so far.

\subsection*{Acknowledgements}
We would like thank to our colleagues for valuable discussions.
The work of S.M.\ is supported by~Grant-in-Aid for Scientific Research~(C) No.~19K03829.
The work of Y.Y.\ is supported by~Grant-in-Aid for Scientific Research~(S) No.~17H06127.

\pdfbookmark[1]{References}{ref}
\LastPageEnding

\end{document}